\renewcommand{\algocf@captiontext}[2]{#1\algocf@typo. \AlCapFnt{}#2} 
\def\@algocf@capt@plain{top}
\renewcommand{\algocf@makecaption}[2]{%
  \addtolength{\hsize}{\algomargin}%
  \sbox\@tempboxa{\algocf@captiontext{#1}{#2}}%
  \ifdim\wd\@tempboxa >\hsize
    \hskip .5\algomargin%
    \parbox[t]{\hsize}{\algocf@captiontext{#1}{#2}}
  \else%
    \global\@minipagefalse%
    \hbox to\hsize{\box\@tempboxa}
  \fi%
  \addtolength{\hsize}{-\algomargin}%
}
\newcommand \ag{\alpha}
\newcommand \ga{\gamma}
\newcommand \Gg{\Gamma}
\newcommand \eg{\varepsilon}
\newcommand \ep{\epsilon}
\newcommand \la{\lambda}
\newcommand \sg{\sigma}
\newcommand{\thg}{\theta}
\newcommand\bzero{\mbox{\boldmath${0}$}}
\newcommand\bx{{\bf x}}
\newcommand\bz{{\bf z}}
\newcommand\bZ{{\bf Z}}
\newcommand\mbB{{\mathbb B}}
\newcommand\mbE{{\mathbb E}}
\newcommand\mbR{{\mathbb R}}
\newcommand\mbS{{\mathbb S}}
\newcommand\cA{{\mathcal A}}
\newcommand\cC{{\mathcal C}}
\newcommand\cT{{\mathcal T}}
\newcommand\cZ{{\mathcal Z}}
\newcommand\lp{\left (}
\newcommand\rp{\right )}
\newcommand\lb{\left [}
\newcommand\rb{\right ]}
\newcommand\lmo{\left |}
\newcommand\rmo{\right |}
\newcommand\lbr{\left \{}
\newcommand\rbr{\right \}}
\newcommand\lip{\left \langle}
\newcommand\rip{\right \rangle}
\def\convP{\stackrel{\mbox{$\scriptstyle P$}}{\rightarrow}}
\def\convv{\stackrel{\mbox{$\scriptstyle v$}}{\rightarrow}}
\def\convd{\stackrel{\mbox{$\scriptstyle d$}}{\rightarrow}}
\newcommand\var{\hbox{\rm var}}
\begin{document}



\markboth{M. Kim and P. Kokoszka}{Extremal correlation coefficient for functional data}

\title{Extremal correlation coefficient for functional data}

\author{M. Kim}
\affil{Department of Statistics, West Virginia University,\\ Morgantown, 26505, WV, USA.
\email{mihyun.kim@mail.wvu.edu}}

\author{P. Kokoszka}
\affil{Department of Statistics, Colorado State University,\\ Fort Collins, 80523, CO, USA \email{piotr.kokoszka@colostate.edu}}

\maketitle

\begin{abstract}
We propose a coefficient that measures extremal dependence in
paired samples of functions. It has properties similar to the
Pearson correlation,  but  differs in significant ways:
(i) it is designed to measure dependence between curves, (ii)
it focuses only on extreme curves. The new coefficient is derived
within the framework of regular variation in Banach
spaces. A consistent estimator is proposed and justified
by an asymptotic analysis and a simulation study.
The usefulness of the new coefficient is illustrated using
financial and climate functional data.
\end{abstract}

\begin{keywords}
Correlation; Extremes; Functional data.
\end{keywords}

\section{Introduction}
With the growing impact of extreme events such as financial downturns or unusual weather, there has been increasing interest in developing statistical tools to study patterns of extreme curves. This is to a large extent due to the increasing availability of high resolution data; asset price curves can be constructed at any temporal resolution, and modern weather databases and computer models contain measurements at hourly or even higher frequencies. Such data can be interpreted as curves, e.g., one curve per day, providing a more comprehensive view of daily patterns compared to a single summary number like the closing price or maximum temperature. Analyzing extreme curves in the framework of functional data analysis thus leads to a more precise understanding of the impacts associated with extreme events.

This paper makes a methodological and theoretical contribution at the nexus of extreme value theory and functional data analysis. We propose a coefficient that quantifies the tendency of paired extreme curves to exhibit similar patterns simultaneously. Two examples of the type of questions that the tool deals with are the following: (i) during a stock market crisis, such as the market decline  due to the covid-19 pandemic, do
returns  of  different sectors of the economy exhibit similar extreme daily trajectories? (ii) how likely is location A to experience  a similar daily pattern of temperature  as location B (on the same day)  during a heat wave? Our proposed coefficient offers a more precise quantification of extreme risk by focusing on (i) the shape of curves and (ii) the extreme parts of paired samples. This point is further illustrated with a data example in Section~\ref{ss:etf}.

There has been some research focusing on probabilistic and
statistical methods for extreme curves.
Extreme value theory in the space of continuous functions is
studied in Chapters 9 and 10 of~\citet{deHaan:ferreira:2000} and~\citet{einmahl:segers:2021}.
Principal component analysis of extreme curves has been studied by~\citet{kokoszka:stoev:xiong:2019},~\citet{kokoszka:kulik:2023}, and~\citet{clemenccon:huet:sabourin:2023}. Extremal properties of scores of functional data were studied by~\citet{kokoszka:xiong:2018} and Kim and Kokoszka (\citeyear{kim:kokoszka:2019},~\citeyear{kim:kokoszka:2023b}).
Additional,  more closely related papers are introduced as we
develop our approach.
We propose a method for quantifying extremal dependence of paired functional samples, for which there are currently no appropriate tools.

In the context of heavy-tailed random vectors, there has been considerable research aimed at quantifying extremal dependence. Ledford and Tawn (\citeyear{ledford:tawn:1996}, \citeyear{ledford:tawn:1997},
\citeyear{ledford:tawn:2003}) introduced the coefficient of tail dependence, which was later generalized to the extremogram by
\citet{davis:mikosch:2009}. The extremal dependence measure based on the angular measure of a regularly varying random vector was introduced by \citet{resnick:2004} and further investigated by
\citet{larsson:resnick:2012}. \citet{janssen:neblung:stoev:2023} recently introduced a unified approach for representing tail dependence using random exceedance sets. Those measures for extremes are  designed for random vectors in a Euclidean space. Therefore, applying any such measures to functional data requires some sort of dimension reduction, e.g., principal component analysis, or data compression like converting daily temperature curves to daily average or maximum values. The reduced data are then analyzed using those tools for multivariate extremes, see, e.g.,~\citet{meinguet:2010},~\citet{dombry:ribatet:2015}, and \citet{kim:kokoszka:2023b}. This approach is convenient, but it does not fully utilize all relevant information that functional data contain.

We develop a new measure, the extremal correlation coefficient, that captures the extremal dependence of paired functional samples utilizing the information in the sizes and shapes of the curves. The measure involves the inner product of pairs of extreme curves and therefore
 requires a finite second moment.   Similar ideas have been applied in non-extreme contexts of functional data analysis. \citet{dubin:muller:2005} introduced a measure, called  dynamical correlation, that computes the inner product of all pairs of standardized curves. The concept was further studied by
\citet{yeh:rice:dubin:2023} where an  autocorrelation measure, termed spherical autocorrelation, for functional time series was proposed. These measures are however computed based on the total body of functional data and so are not suitable for describing extremal dependence.

The coefficient we develop quantifies extremal dependence by specifically focusing on the extreme parts of heavy-tailed functional observations. It is conceptually appealing, as it shares desirable features with the classic correlation coefficient: (i) its values range from -1 to 1, (ii) it measures the strength and direction of linear relationship between two extreme curves, (iii) if the extremal behavior of two curves is independent, the coefficient is zero. Moreover, it can be used in practice with a relatively simple numerical implementation. We thus hope that such an interpretable and tractable tool makes a useful contribution.

Turning to mathematical challenges, the concept of vague convergence, see e.g., Chapters 2 and 6 of \citet{resnick:2007}, cannot be readily used. The vague convergence, which now provides a standard mathematical framework for extremes in Euclidean spaces, can be defined only on locally compact spaces. Since every locally compact Banach space is finite-dimensional, a different framework must be used for functional data in Hilbert spaces. We use the theory of regularly varying measures developed by~\citet{hult:lindskog:2006} who introduced the notion of $M_0$ convergence, which works for regularly varying measures on complete separable metric spaces. The $M_0$ convergence is further studied by~\citet{meinguet:2010}, where it is applied to regularly varying time series in a separable Banach space. The concept of $M_0$ convergence has been  generalized,
see Section B.1.1. of~\citet{kulik:soulier:2020}, with notable contributions from~\citet{lindskog:resnick:roy:2014} and~\citet{segers:zhao:meinguet:2017}. We establish the consistency of the estimator we propose within the framework developed by~\citet{hult:lindskog:2006} and~\citet{meinguet:2010}. We proceed through a number of $M_0$ convergence results that allow us to apply an abstract Bernstein-type inequality. A method for analytically computing the extremal correlation coefficients in relatively simple cases is also developed.



\section{Regular variation in Banach spaces} \label{s:RV}
This section presents background needed to understand the development in Sections~\ref{s:BRV} and~\ref{s:ecc}. In functional data analysis, observations are typically treated as elements of $L^2 := L^2(\cT)$, where the measure space $\cT$ is such that $L^2(\cT)$ is a {\em separable} Hilbert
space, equipped with the usual inner product $\lip x,y\rip = \int_{\cT} x(t)y(t) dt$.  The $L^2$-norm is then $\| x\| = \lip x, x\rip^{1/2}$ = $ (\int_{\cT} x(t)^2 dt)^{1/2}$. An introduction to functional data analysis is presented in~\citet{KRbook}, with a detailed mathematical treatment available in~\citet{hsing:eubank:2015}. While we refer to the elements of
$L^2$ as curves, due to the  examples we consider, the set
$\cT$ can  be a fairly abstract space (a metric Polish space),
for example a spatial domain.

An extreme curve in $L^2$ is defined as a functional object with a substantial deviation from the mean function, measured by the $L^2$-norm. The norm can be large for various reasons as long as the area under the squares of the curves around the mean function over $\cT$ is large. For example, curves that are far away from the sample mean or that fluctuate a lot around the sample mean will be extreme according to this definition. Extreme functional observations are thus very different from extreme scalar or multivariate observations because there is a multitude of ways in which a curve can be extreme. We informally call functional data heavy-tailed if the probability that an  extreme curve occurs is relatively large.

We now briefly review the $M_0$ convergence in a separable Banach space $\mbB$. In what follows, $\bzero$ is the zero element. Fix a norm $\| \cdot \|_{\mbB}$ and let $B_{\eg} := \{z \in \mbB: \| z\|_{\mbB} <\eg\}$ be the open ball of radius $\eg>0$ centered at the origin. A Borel measure $\mu$ defined on $\mbB_0:=\mbB  \setminus \{\bzero\}$ is said to be boundedly finite if $\mu(A)<\infty$, for all Borel sets that are bounded away from $\bzero$, i.e., $A \cap B_{\eg} = \emptyset$, for some $\eg>0$. Let $M_0 (\mbB)$ be the collection of all such measures on $\mbB_0$. For $\mu_n, \mu \in M_0(\mbB)$, the sequence of $\mu_n$ converges to $\mu$ in the $M_0$ topology ($\mu_n \stackrel{M_0}{\longrightarrow} \mu$),
if $\mu_n(A) \to \mu(A)$, for all bounded away from $\bzero$, $\mu$--continuity Borel sets $A$, i.e., those with $\mu(\partial A)=0$, where $\partial A$ is the boundary of $A$. Equivalently, $\mu_n \stackrel{M_0}{\longrightarrow} \mu$, if $\int_{\mbB} f(x) \mu_n(dx) \to \int_{\mbB} f(x) \mu(dx)$ for all $f \in \cC_0(\mbB)$, where $\cC_0 (\mbB)$ is the class of bounded and continuous functions $f:\mbB_0 \to \mbR$ that vanish on a neighborhood of $\bzero$.

We now define regular variation for random elements in $\mbB$, see Theorem 3.1 of~\citet{hult:lindskog:2006} and Chapter 2 of \citet{meinguet:2010}. This concept formalizes the idea of heavy-tailed observations in infinite dimensional spaces.

\begin{definition} \label{d:RVinL2}
A random element $X$ in $\mbB$ is regularly varying with index $-\ag$, $\ag> 0$, if there exist a sequence $b(n) \to \infty$ and a measure $\mu$ in $M_0(\mbB)$ such that
\begin{equation} \label{eq:def1-H}
n{\rm pr} \lp  \frac{X}{b(n)}  \in \cdot \rp
\stackrel{M_0}{\longrightarrow} \mu, \ \ \ \ n \to \infty.
\end{equation}

\end{definition}

The exponent measure $\mu$ is defined up to a multiplicative
constant and satisfies  $\mu(tA) = t^{-\ag}\mu(A)$ for any $t>0$ and Borel sets $A \subset \mbB_0$. The following lemma, see Chapter 2 of \citet{meinguet:2010}, states an equivalent definition of regularly varying  elements in $\mbB$. Throughout the paper, we denote the unit sphere in a normed space $\mbE$ by
\[
\mbS_{\mbE} := \{ u \in \mbE : \|u\|_{\mbE} = 1 \}.
\]

\begin{lemma} \label{l:pol-L2}
A random element $X$ in $\mbB$ is regularly varying with index $-\ag$, $\ag> 0$, if and only if there exist a sequence $b^{\prime}(n) \to \infty$ and a probability measure $\Gg$ on $\mbS_{\mbB}$  (called the angular measure) such that for any $z>0$,
\begin{equation} \label{eq:def2-H}
n{\rm pr} \lp \| X  \|_{\mbB} >b^{\prime}(n)z, X/\| X  \|_{\mbB} \in \cdot \rp
\stackrel{w}{\longrightarrow} c z^{-\ag}\Gg, \ \ \ \ n \to \infty,
\end{equation}
for some $c>0$.
\end{lemma}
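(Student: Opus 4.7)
The plan is to identify (\ref{eq:def1-H}) and (\ref{eq:def2-H}) through the polar coordinate map $T:\mbB_0 \to (0,\infty)\times \mbS_\mbB$, $T(x)=(\|x\|_{\mbB}, x/\|x\|_{\mbB})$, which is a homeomorphism between $\mbB_0$ and the product space. Under this correspondence, a Borel set in $\mbB_0$ is bounded away from $\bzero$ if and only if its image is contained in $[\eg,\infty)\times\mbS_\mbB$ for some $\eg>0$, so $M_0$ convergence on $\mbB_0$ corresponds bijectively to the natural $M_0$ convergence at $r=0$ on the product space. Testing against $f\in\cC_0(\mbB)$ amounts to testing against bounded continuous functions on $(0,\infty)\times\mbS_\mbB$ that vanish in a neighborhood of $r=0$.

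For the direction (\ref{eq:def2-H})$\Rightarrow$(\ref{eq:def1-H}), I construct $\mu$ by pushing forward $\tilde\mu(dr,du)=c\ag r^{-\ag-1}\,dr\otimes \Gg(du)$ on $(0,\infty)\times \mbS_\mbB$ back to $\mbB_0$ via $T^{-1}$. Since $\tilde\mu([\eg,\infty)\times\mbS_\mbB) = c\eg^{-\ag}<\infty$, the resulting $\mu$ lies in $M_0(\mbB)$ and satisfies the scaling $\mu(tA)=t^{-\ag}\mu(A)$. Setting $b(n):=b^{\prime}(n)$ and taking $f\in\cC_0(\mbB)$ that vanishes on $B_\eg$, I write $n\,\mbE\{f(X/b(n))\}$ in polar form and stratify the event $\{\|X\|_{\mbB}>b^{\prime}(n)z\}$ over $z\in[\eg,\infty)$. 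The hypothesis (\ref{eq:def2-H}), applied to the continuous angular test functions $u\mapsto f(zu)$ for each fixed $z\ge \eg$, combined with dominated convergence in $z$, delivers $\int f\,d\mu_n \to \int f\,d\mu$, which is the integral characterization of $M_0$ convergence.

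For the direction (\ref{eq:def1-H})$\Rightarrow$(\ref{eq:def2-H}), I first derive the homogeneity $\mu(tA)=t^{-\ag}\mu(A)$: applying (\ref{eq:def1-H}) along the subsequence $\lfloor nt^{\ag}\rfloor$ and exploiting the fact that $b(\cdot)$ must be regularly varying with index $1/\ag$ yields the scaling on any $\mu$-continuity set bounded away from $\bzero$. Then I set $c:=\mu(\{x:\|x\|_{\mbB}>1\})$, which is finite because this set is bounded away from $\bzero$, and define the probability measure $\Gg(A):=c^{-1}\mu(\{x:\|x\|_{\mbB}>1,\; x/\|x\|_{\mbB}\in A\})$ on $\mbS_\mbB$. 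Choosing $b^{\prime}(n)$ so that $n\,{\rm pr}(\|X\|_{\mbB}>b^{\prime}(n))\to c$, the set $\{x:\|x\|_{\mbB}>z,\; x/\|x\|_{\mbB}\in A\}$ is a $\mu$-continuity set bounded away from $\bzero$ for every $\Gg$-continuity Borel $A\subset \mbS_\mbB$ and every $z>0$ outside an at-most-countable exceptional set. Applying (\ref{eq:def1-H}) to such sets, together with the homogeneity, gives $\mu(\{\|x\|_{\mbB}>z,\, x/\|x\|_{\mbB}\in A\}) = cz^{-\ag}\Gg(A)$, which is (\ref{eq:def2-H}) by the Portmanteau theorem.

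The main obstacle is the bookkeeping between the two different continuity classes: (\ref{eq:def1-H}) is phrased in terms of $\mu$-continuity sets in $\mbB_0$, whereas (\ref{eq:def2-H}) involves weak convergence of a one-parameter family of measures on $\mbS_\mbB$ indexed by the radial threshold $z$. The bridge requires verifying that the polar-rectangular sets used to transfer information between the two formulations are genuinely $\mu$-continuity sets, which holds for all but countably many $z$ by homogeneity, and that convergence on this rich enough subclass propagates to all $\mu$-continuity sets via a $\pi$--$\lambda$ argument. A secondary subtlety is the calibration of the normalizing sequences $b(n)$ and $b^{\prime}(n)$, which may differ by a constant factor absorbed into $c$; both inherit regular variation of index $1/\ag$ from the regularly varying tail of $\|X\|_{\mbB}$.
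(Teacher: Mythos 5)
The paper does not actually prove this lemma: it is imported by citation from Chapter 2 of Meinguet (2010) (equivalently Theorem 3.1 of Hult and Lindskog, 2006), and your polar-decomposition argument is essentially the proof given in those references --- push $c\ag r^{-\ag-1}dr\otimes\Gg$ through the polar homeomorphism in one direction, and in the other derive homogeneity of $\mu$ from the regular variation of $b(\cdot)$, normalize on $\{x:\|x\|_{\mbB}>1\}$, and read off $\Gg$ from polar rectangles. The one step you should tighten is the passage from convergence on polar rectangles to full $M_0$ convergence in the (\ref{eq:def2-H})$\Rightarrow$(\ref{eq:def1-H}) direction: a function in $\cC_0(\mbB)$ need not be uniformly continuous on the non-compact annuli $\{x:\eg\le\|x\|_{\mbB}\le M\}$ of an infinite-dimensional space, so the ``dominated convergence in $z$'' against angular test functions is delicate, and it is cleaner to argue at the level of sets --- polar rectangles with $\Gg$-continuity angular parts form a convergence-determining $\pi$-system for the finite restrictions of $\mu_n$ to $\{x:\|x\|_{\mbB}>\eg\}$ (Billingsley's theorem on convergence-determining classes, using separability of $\mbS_{\mbB}$), after which the $M_0$ Portmanteau theorem of Hult and Lindskog finishes the job; note that the ``$\pi$--$\lambda$ argument'' you invoke at the end identifies the limit measure but does not, by itself, yield the weak convergence.
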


\begin{figure}[t]
\figuresize{.5}
\figurebox{20pc}{25pc}{}[Gg]
\caption{The first three orthonormal basis elements in $L^2[0,1]$ defined in~\eqref{eq:basis} (left-most); simulated data when $\Gg$ concentrates on $\phi_1$(second from the left); on $\phi_2$ (third left); on $\phi_3$ (fourth left).
\label{f:Gg}}
\end{figure}

If Definition~\ref{d:RVinL2} (or condition~\eqref{eq:def2-H}) holds,
we write $X \in RV(-\ag, \Gg)$. The polar representation~\eqref{eq:def2-H} provides an intuitive interpretation of regular variation in $\mbB$. It characterizes regular variation of $X$ in $\mbB$ using two components, the tail index $\ag$ and the angular probability measure $\Gg$. The tail index $\ag$ quantifies the tail heaviness of the distribution of $\|X\|_{\mbB}$, with smaller values of $\ag$ implying a higher probability of extreme curves. While the tail index $\ag$ determines the frequency of occurrence of extreme curves, the angular measure $\Gg$, defined on the unit sphere $\mbS_{\mbB}$, fully characterizes the distribution of the shape of the scaled extreme curves, $X/\|X\|_{\mbB}$. To illustrate this, consider a set of orthonormal functions in $L^2([0,1])$ of the form
\begin{equation} \label{eq:basis}
\phi_j(t) = 2^{1/2} \sin \lp \lp j-\frac{1}{2}\rp \pi t\rp, \ \ \ \ j=1,2,\ldots, \ \ t \in [0,1].
\end{equation}
The first three functions are shown in the left-most plot of Figure~\ref{f:Gg}. We consider a finite-dimensional subspace of $L^2([0,1])$, spanned by the first 9 $\phi_j$'s, for the purpose of simulations. The data generating process is
$X(t) = \sum_{j=1}^{9} Z_{j} \phi_j(t)$,
where $[Z_1, \ldots, Z_9]^{\top}$ is a 9-dimensional random vector with independent components. Suppose that $Z$ is a random variable following a Pareto distribution with tail index $\ag=3$ and $N$ is a normal random variable with mean 0 and variance 0.5. We consider the following three cases for $[Z_1, \ldots, Z_9]^{\top}$:

\medskip

\noindent 1. $[Z, N, N, N, \ldots, N]^{\top}$; the angular measure $\Gg$ concentrates on $\phi_1$.

\noindent 2. $[N, Z, N, N \ldots, N]^{\top}$; the angular measure $\Gg$ concentrates on $\phi_2$.

\noindent 3. $[N, N, Z, N \ldots, N]^{\top}$; the angular measure $\Gg$ concentrates on $\phi_3$.

\medskip

In all three cases, it follows from Proposition 7.1 and Example 7.3 of~\citet{meinguet:segers:2010} that $X(t)$ is regularly varying with tail index $\ag=3$.
Fig.~\ref{f:Gg} displays simulated data with sample size of 100 for each of the three cases. The plots of simulated data clearly show that the angular measure $\Gg$ represents the distribution of the shapes of extreme curves in that they are dominated by the shape of the functional axis $\phi_j$ on which $\Gg$ concentrates.

\section{Bivariate regular variation in Banach spaces} \label{s:BRV}
In order to describe the extremal dependence of two regularly varying random elements $X$ and $Y$ in $L^2 \times L^2$, we need to identify their joint probabilistic behavior. We again study it in the more general space $\mbB^2$. We propose the following definition, which is a direct extension of Definition~\ref{d:RVinL2}.

\begin{definition} \label{d:RVinL2L2}
A bivariate random element $[X,Y]^{\top
}$ in $\mbB^2$ is said to be jointly regularly varying with index $-\ag$, $\ag> 0$, if there exist a sequence $b(n) \to \infty$ and a measure $\mu$ in $M_0(\mbB^2)$ such that
\begin{equation} \label{eq:def-RVXY}
n{\rm pr} \lp \frac{(X,Y)}{b(n)}  \in \cdot \rp
\stackrel{M_0}{\longrightarrow} \mu, \ \ \ \ n \to \infty.
\end{equation}

\end{definition}

The joint exponent measure $\mu$ satisfies the scaling property
\begin{equation} \label{eq:mu-s}
\mu(tA) = t^{-\ag}\mu(A),
\end{equation}
for any $t>0$ and Borel sets $A \subset \mbB_0^2 := \mbB^2 \setminus (\bzero, \bzero)$. Since $X$ and $Y$ are scaled by the same function $b(n)$, their marginal distributions are tail equivalent. As for a single Banach space, the measure  $\mu$ is defined
only up to a multiplicative constant, and the $b(n)$ are not unique.
To introduce suitable normalizations, consider the sets
\begin{equation} \label{eq:A}
\mathcal A_r =\{(x,y)\in \mbB^2: \|(x,y)\|_{\mbB^2} \ge r\}, \ \ \ \ r>0.
\end{equation}
According to Lemma~\ref{l:mu-conti}, the boundaries of these sets have $\mu$-measure zero. Thus, by normalizing $\mu$ using the usual condition
\begin{equation} \label{eq:mu1}
\mu(\cA_1 )=\mu\{(x,y) \in \mbB^2: \|(x,y)\|_{\mbB^2} >1\}=1,
\end{equation}
we obtain that $n {\rm pr}(\|(X,Y)\|_{\mbB^2} >b(n)) \to 1$.

Lemma \ref{l:pol-L2} implies the following equivalent characterization of regular variation in $\mbB^2$.

\begin{lemma} \label{l:BiRV}
A bivariate random element $[X,Y]^{\top}$ in $\mbB^2$ is regularly varying with index $-\ag$, $\ag>0$, if and only if there exists a sequence $b^{\prime}(n) \to \infty$ and a probability measure $\Gg$ on $\mbS_{\mbB^2}$ (called the joint angular measure) such that for any $z>0$,
\begin{equation} \label{eq:pol-RVXY}
n{\rm pr} \lp   \|(X, Y)\|_{\mbB^2}    >b^{\prime}(n)z,
\ \frac{(X,Y)}{\|(X, Y)\|_{\mbB^2}}
  \in \cdot \rp
\stackrel{w}{\longrightarrow} c z^{-\ag}\Gg, \ \ \ \ n \to \infty,
\end{equation}
for some $c>0$.
\end{lemma}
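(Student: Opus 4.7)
The plan is to observe that $\mbB^2$, equipped with a product norm $\|(x,y)\|_{\mbB^2}$ such as $(\|x\|_{\mbB}^2+\|y\|_{\mbB}^2)^{1/2}$, is itself a separable Banach space, so that the $M_0$-convergence machinery reviewed in Section~\ref{s:RV} for a single Banach space applies verbatim to $\mbB^2$. Under the substitution $\mbB \mapsto \mbB^2$ and $X \mapsto [X,Y]^{\top}$, Definition~\ref{d:RVinL2L2} becomes Definition~\ref{d:RVinL2}, so Lemma~\ref{l:pol-L2} delivers the desired polar representation directly, with the limiting angular measure supported on $\mbS_{\mbB^2}$.

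To make this reduction explicit, I would argue as follows. For the forward direction, starting from~\eqref{eq:def-RVXY} I would apply the continuous mapping theorem for $M_0$-convergence to the polar map $T(x,y)=\lp\|(x,y)\|_{\mbB^2},\,(x,y)/\|(x,y)\|_{\mbB^2}\rp$, which is a homeomorphism from $\mbB_0^2$ onto $(0,\infty)\times \mbS_{\mbB^2}$. The scaling relation~\eqref{eq:mu-s} then forces the pushforward $\mu \circ T^{-1}$ to factor as $c\,\nu_{\ag}\otimes \Gg$, where $\nu_{\ag}(dz)=\ag z^{-\ag-1}dz$ on $(0,\infty)$ and $\Gg$ is a probability measure on $\mbS_{\mbB^2}$. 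Choosing $b^{\prime}(n)$ so that $n\,{\rm pr}(\|(X,Y)\|_{\mbB^2}>b^{\prime}(n))\to c$ then yields~\eqref{eq:pol-RVXY}. The reverse direction runs the argument backwards on the generating class of product sets $\{(x,y):\|(x,y)\|_{\mbB^2}>z\}\cap T^{-1}((0,\infty)\times B)$ with $\Gg(\partial B)=0$, and extends to all $\mu$-continuity sets bounded away from $(\bzero,\bzero)$ by a standard monotone class argument.

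The only real obstacle is the observation in the first paragraph; once that is in place the proof is a direct transcription of the univariate case. Boundedness away from $(\bzero,\bzero)$ of the sets $\cA_r$ in~\eqref{eq:A} is immediate from the product-norm structure, and the $\mu$-continuity of their boundaries — needed to normalize the measure via~\eqref{eq:mu1} and to identify the constant $c$ — is exactly what the forthcoming Lemma~\ref{l:mu-conti} supplies.
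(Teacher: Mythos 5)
Your proposal is correct and is essentially the paper's own argument: the paper proves Lemma~\ref{l:BiRV} simply by noting that $\mbB^2$ with a product norm is itself a separable Banach space, so Definition~\ref{d:RVinL2L2} is Definition~\ref{d:RVinL2} applied to the random element $[X,Y]^{\top}$, and Lemma~\ref{l:pol-L2} yields the polar representation directly. The additional detail you supply on the polar homeomorphism and the factorization $c\,\nu_{\ag}\otimes\Gg$ is a (correct) re-derivation of Lemma~\ref{l:pol-L2} itself rather than anything extra the reduction requires; the only cosmetic difference is that the paper fixes the max norm $\|x\|_{\mbB}\vee\|y\|_{\mbB}$ rather than the Euclidean product norm, which is immaterial here.
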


With the scaling property~\eqref{eq:mu-s} and normalization condition~\eqref{eq:mu1}, we have $c=1$ in~\eqref{eq:pol-RVXY}, and the joint angular measure $\Gg$ on $\mbS_{\mbB^2}$ can be defined by
\begin{equation} \label{eq:Gg}
\Gg(S) = \mu\{(x,y) \in \mbB^2: \|(x,y)\|_{\mbB^2} >1, (x,y)/\|(x,y)\|_{\mbB^2} \in S\},
\end{equation}
for any Borel sets $S \subset \mbS_{\mbB^2}$. Throughout the paper, we use the norm on $\mbB^2$ defined by
\[
\|(x,y)\|_{\mbB^2} := \|x\|_{\mbB} \vee \|y\|_{\mbB}.
\]
This choice works well with the extremal correlation coefficient defined in Section~\ref{s:ecc}.

Convergence~\eqref{eq:pol-RVXY} characterizes the joint extremal behavior of the bivariate regularly varying random vector $[X,Y]^{\top}$ in $\mbB^2$ in terms of the tail index $\ag$ and the joint angular measure $\Gg$ on $\mbS_{\mbB^2}$. Here, $\ag$ is the tail index of $\|X\|_{\mbB} \vee \|Y\|_{\mbB}$
and thus quantifies the frequency at which either $X$ or $Y$ exhibits an extreme size. The joint angular probability measure $\Gg$ characterizes how the shapes of normalized $X$ and $Y$ relate in extremes. If the extreme curves are exactly proportional, i.e., $X=\la Y$, $\la \neq 0$, $\Gg$ concentrates on pairs $(x,y)$ where the two components share the same functional shape up to scaling but with norms satisfying $\|x\|_{\mbB} \vee \|y\|_{\mbB}=1$. If the shapes of two curves differ substantially in extremes, $\Gg$ places mass on pairs
$(x,y)$ where the normalized curves exhibit distinct functional forms,
such as orthogonality. This situation corresponds to vanishing
extremal covariance defined in Section~\ref{s:ecc}.
The measure $\Gg$ can also describe asymptotic independence
between extremes of $X$ and $Y$, meaning that if one curve exhibits
an extreme pattern, the probability that the other curve simultaneously
exhibits an extreme pattern is negligible. In this case, $\Gg$
concentrates on pairs $(x,y)$ where either $x = 0$ or $y = 0$,
which also corresponds to vanishing extremal covariance.

The  marginal extremal behavior of $X$ can be obtained by integrating all
possible values of $Y$ in~\eqref{eq:def-RVXY}.  Then $X$ has its marginal measure $\mu_X$, and equivalently $X \in RV(-\ag, \Gg_X)$, where $\Gg_X$ is the marginal angular measure of $X$. Similarly, $Y$ has its  marginal $\mu_Y$, and equivalently $Y \in RV(-\ag, \Gg_Y)$, where $\Gg_Y$ is the marginal angular measure of $Y$. We assume that the one-dimensional marginal distributions of $\mu$ are non-degenerate, i.e., $\mu_X$ and $\mu_Y$ are measures in $M_0(\mbB)$ satisfying analogs of~\eqref{eq:def1-H}.

\section{Extremal correlation coefficient for functional data} \label{s:ecc}
In this section, we introduce the extremal correlation coefficient for functional data. It focuses on the extreme part of the joint distribution of regularly varying random vector $[X,Y]^{\top}$ in $L^2 \times L^2$ and measures the tendency of paired curves to exhibit similar extreme patterns.

To define the extremal correlation coefficient, we begin by introducing the concept of extremal covariance for functional data. Given a regularly varying bivariate random element $[X,Y]^{\top}$ in $L^2\times L^2$ with joint exponent measure $\mu$ in~\eqref{eq:def-RVXY}, we define the extremal covariance between $X$ and $Y$ by
\begin{equation} \label{eq:cov}
    \sg_{XY} = \int_{\|x\| \vee \|y\|>1} \lip x, y \rip  \mu(dx, dy).
\end{equation}
Recall that by~\eqref{eq:mu1}, $\mu$ is a probability measure on the domain $\{(x,y) \in L^2 \times L^2:\|x\| \vee \|y\|>1\}$, so $\mu I_{\|x\| \vee \|y\|>1}$ represents a probability distribution describing the  joint extremal behavior of $X$ and $Y$. The extremal covariance is thus an extreme analog of the classic covariance in that $\sg_{XY}$ measures how much two random curves vary together in extremes. In order to define the extremal covariance, $[X,Y]^{\top}$ must be regularly varying with index $-\ag$, where $\ag>2$.  The condition $\ag>2$ is necessary because the definition of extremal covariance presumes the existence of the second moment, just as the usual covariance does. We elaborate on it at the end of this
section, but note here that, as explained at the beginning of Section~\ref{s:pw},
regularly varying functions can be transformed to have the index $\ag>2$.

To interpret  the extremal covariance \eqref{eq:cov}, we focus on the
polar representation involving the joint angular measure
$\Gg$ in~\eqref{eq:Gg}. Specifically, for any
$(x,y) \neq (\bzero, \bzero)$, write $(x,y)
= r(\thg_X, \thg_Y)$, where $r:=\|(x,y)\|$ and $(\thg_X, \thg_Y) := (x,y)/\|(x,y)\|$. Then, $\mu$ can be decomposed as
\[
\mu(dx,dy) = \ag r^{-\ag-1} dr \Gg(d\thg_X,d\thg_Y).
\]
Using this, the extremal covariance $\sigma_{XY}$ can be expressed as
\begin{equation} \label{eq:cov_pol}
\sg_{XY} = \int_{r >1} r^2 \ag r^{-\ag-1} dr
\int_{\mbS_{L^2 \times L^2}} \lip \thg_X, \thg_Y \rip \Gg (d\thg_X,d\thg_Y) = \frac{\ag}{\ag-2} \int_{\mbS_{L^2 \times L^2}} \lip \thg_X, \thg_Y \rip \Gg (d\thg_X,d\thg_Y).
\end{equation}
The extremal covariance of $X$ and $Y$ can thus be decomposed into
the tail index factor $\ag/(\ag - 2)$ and the angular dependence
given by the integral.

\begin{table}[t]
\caption{Ranges of $\sg_{XY}$, the extremal covariance between $X$ and $Y$. Asymptotic independence refers to a joint distribution where
extreme $X$ and $Y$ rarely occur simultaneously.}  \label{t:ga}
\centering
\medskip
\begin{tabular}{c|lc}
\hline\hline
Asymptotic Independence & \multicolumn{2}{c}{Asymptotic Dependence} \\
\hline
& $X/(\|X\| \vee \|Y\|)$ and $Y/(\|X\| \vee \|Y\|)$ & \\
$\sg_{XY} \approx 0$& \ \ look similar & $\sg_{XY} >0$  \\
& \ \ look orthogonal & $\sg_{XY}\approx 0$ \\
& \ \ look opposite & $\sg_{XY}<0$ \\
\hline \hline
\end{tabular}
\end{table}

While the tail index factor represents the occurrence
intensity of either extreme $\| X\|$ or $\| Y\|$, the angular
dependence part captures the mutual relationship between the curves and how the shapes of the normalized extreme curves co-vary. More precisely, if the extremes of $X$ and $Y$ are asymptotically independent, meaning that extreme curves in $X$ and $Y$ rarely occur simultaneously, then $\Gamma$ places most mass near pairs where one component is close to the zero element, causing the integral to be close to zero, and thus $\sigma_{XY}$ will be near zero regardless of the shapes of the extreme curves. If $X$ and $Y$ are asymptotically dependent, meaning that extreme curves tend to occur simultaneously in $X$ and $Y$, then the value of $\sigma_{XY}$ depends on the shape similarity between the normalized curves $X/(\|X\| \vee \|Y\|)$ and $Y/(\|X\| \vee \|Y\|)$. Consequently, there are three possible ranges for $\sg_{XY}$, depending on the relative shape of these normalized extremes: (i) $\sg_{XY}>0$ when the shapes are similar, (ii) $\sg_{XY} \approx 0$ when the shapes do not match and are approximately orthogonal, or (iii) $\sg_{XY}<0$ when the shapes are opposite. These properties are summarized in Table~\ref{t:ga}.

To provide a scale-invariant measure of extremal dependence, we next define the extremal correlation coefficient.

\begin{definition} \label{d:cor}
The extremal correlation coefficient is defined by
\begin{equation} \label{eq:cor}
\rho_{XY} = \frac{\sg_{XY}}{\sg_{X}\sg_{Y}},
\end{equation}
where the extremal covariance $\sg_{XY}$ is defined by \eqref{eq:cov}, and
where
\begin{align*}
\sg_{X} = \lp \int_{\|x\| \vee \|y\|>1}  \|x\|^2   \mu(dx,dy) \rp^{1/2}, \ \ \ \
\sg_{Y} = \lp \int_{\|x\| \vee \|y\|>1}  \|y\|^2   \mu(dx,dy) \rp^{1/2}.
\end{align*}
\end{definition}
The coefficient $\rho_{XY}$ has  properties  analogous to the
classic correlation coefficient: (i) $-1\le \rho_{XY} \le 1$, (ii) $\rho_{XY}$
measures the strength and direction of linear relationships between $X$
and $Y$ in extremes, (iii) if $X$ and $Y$ are independent,
then $\rho_{XY}=0$ since independence implies asymptotic
independence between $\|X\|$ and $\|Y\|$. We note that
$\rho_{XY}$ may depend on $\ag$ if $\Gg$ depends on $\ag$.
An example is provided in Lemma~\ref{l:sim1}, see also Example 2.2.8 in
\citet{kulik:soulier:2020}. If $\Gg$ does not depend on
$\ag$, then $\rho_{XY}$ does not depend on $\ag$.

To motivate our estimation approach,
we first show that  $\sg_{XY}$ is a  limit of the expected inner product of scaled $X$ and $Y$ conditional on large values of $[X,Y]^{\top}$.

\begin{proposition} \label{p:sglim}
Let $[X,Y]^{\top}$ be a regularly varying random element in $L^2 \times L^2$. Then,
\[
\sg_{XY} = \lim_{n\to \infty}E\lb \lip \frac{X}{b(n)}, \frac{Y}{b(n)} \rip \Bigg|\|X\| \vee \|Y\| >b(n)\rb.
\]
\end{proposition}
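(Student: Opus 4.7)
My plan is to re-express the conditional expectation in polar coordinates via Lemma~\ref{l:BiRV} and then combine weak convergence with a uniform integrability argument that exactly uses the hypothesis $\ag>2$. Under the normalization~\eqref{eq:mu1} we may take $c=1$ and $n\,\mathrm{pr}(\|(X,Y)\|_{\mbB^2}>b(n))\to 1$, so Lemma~\ref{l:BiRV} implies that, conditionally on $\|(X,Y)\|_{\mbB^2}>b(n)$, the pair
\[
(R_n,\Theta_n)\ :=\ \lp \|(X,Y)\|_{\mbB^2}/b(n),\ (X,Y)/\|(X,Y)\|_{\mbB^2} \rp
\]
converges in distribution to $(R,\Theta)$, where $R$ has law $\mathrm{pr}(R>r)=r^{-\ag}$ on $(1,\infty)$, $\Theta=(\Theta_X,\Theta_Y)\sim \Gg$ on $\mbS_{\mbB^2}$, and $R$ is independent of $\Theta$ (this independence is precisely the product form of the limit in~\eqref{eq:pol-RVXY}).

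Writing $(X,Y)=R_n b(n)\Theta_n$, the integrand becomes $R_n^2\,\lip \Theta_{X,n},\Theta_{Y,n}\rip$, and because $\|\Theta_{X,n}\|\vee\|\Theta_{Y,n}\|=1$ the Cauchy--Schwarz inequality yields $|\lip \Theta_{X,n},\Theta_{Y,n}\rip|\le 1$. Hence the conditional expectation in the statement reduces to
\[
E\lb R_n^2\,\lip \Theta_{X,n},\Theta_{Y,n}\rip \,\Big|\, \|(X,Y)\|_{\mbB^2}>b(n)\rb,
\]
and it suffices to show that this converges to $E[R^2]\cdot E[\lip \Theta_X,\Theta_Y\rip]$, which by~\eqref{eq:cov_pol} equals $\sg_{XY}$.

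The main obstacle is that $R_n^2$ is unbounded, so weak convergence of $(R_n,\Theta_n)$ does not automatically give convergence of expectations. This is where $\ag>2$ enters. To verify uniform integrability of $\{R_n^2\}$, I would use a Karamata-type identity: writing $T=\|(X,Y)\|_{\mbB^2}$, which is real-valued and regularly varying with index $\ag$, integration by parts gives, for $K>1$,
\[
E\lb R_n^2 \mathbf 1\{R_n>K\} \,\Big|\, R_n>1 \rb = K^2\,\frac{\mathrm{pr}(T>Kb(n))}{\mathrm{pr}(T>b(n))} + \int_K^\infty 2r\,\frac{\mathrm{pr}(T>rb(n))}{\mathrm{pr}(T>b(n))}\,dr.
\]
By Potter's bounds the ratio $\mathrm{pr}(T>rb(n))/\mathrm{pr}(T>b(n))$ is uniformly dominated in $n$ by an integrable (in $r$, since $\ag>2$) envelope and converges to $r^{-\ag}$, so the right-hand side tends to $\ag K^{2-\ag}/(\ag-2)$, which vanishes as $K\to\infty$. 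This gives uniform integrability.

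Combining uniform integrability, the weak convergence of $(R_n,\Theta_n)$ to the product law of $(R,\Theta)$, and the continuity and boundedness of $(\theta_X,\theta_Y)\mapsto \lip\theta_X,\theta_Y\rip$ on $\mbS_{\mbB^2}$, I obtain
\[
E\lb R_n^2\,\lip \Theta_{X,n},\Theta_{Y,n}\rip \,\Big|\, T>b(n)\rb \longrightarrow E[R^2]\cdot \int_{\mbS_{\mbB^2}}\lip \theta_X,\theta_Y\rip\,\Gg(d\theta_X,d\theta_Y),
\]
with $E[R^2]=\ag/(\ag-2)$. The right-hand side is exactly the polar expression~\eqref{eq:cov_pol} of $\sg_{XY}$, completing the proof.
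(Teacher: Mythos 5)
Your proof is correct, but it follows a different technical route from the paper's. The paper works directly at the level of the rescaled laws: it writes the conditional expectation as an integral of the unbounded function $f(x,y)=\lip x,y\rip I_{\|x\|\vee\|y\|>1}$ against $n\,{\rm pr}(b(n)^{-1}(X,Y)\in\cdot)$, notes that $f$ vanishes near the origin and that its discontinuity set $\partial\cA_1$ is $\mu$-null (Lemma~\ref{l:mu-conti}), and then invokes Lemma~\ref{l:cont_m0}, which extends the $M_0$ continuous-mapping lemma to this unbounded $f$ by splitting the domain into $\cA_1\setminus\cA_j$ and the tail $\cA_j$ and showing the tail contribution is $O(j^{2-\ag})$ uniformly in $n$. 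You instead pass to polar coordinates, establish weak convergence of the conditional law of $(R_n,\Theta_n)$ to the product of a Pareto($\ag$) law and $\Gg$, bound the angular factor by Cauchy--Schwarz, and control the unbounded radial factor $R_n^2$ by uniform integrability via Potter's bounds. The two arguments are morally parallel --- both hinge on $\ag>2$ to kill the tail of $r^2$ --- but yours replaces the measure-theoretic truncation (Lemmas~\ref{l:cont} and~\ref{l:cont_m0}) with a classical regular-variation estimate, which is arguably more self-contained and makes the role of $\ag>2$ more transparent; the paper's version has the advantage that Lemmas~\ref{l:cont} and~\ref{l:cont_m0} are reused elsewhere (e.g., in Lemma~\ref{l:alM} and the proof of Theorem~\ref{t:cs}). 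Two small points you should make explicit if you write this up: (i) Lemma~\ref{l:BiRV} gives convergence of $n\,{\rm pr}(R_n>z,\Theta_n\in\cdot)$ for each fixed $z$, and you need to upgrade this to joint weak convergence of the conditional pair $(R_n,\Theta_n)$ on $(1,\infty]\times\mbS_{\mbB^2}$ (standard, since sets $(z,\infty]\times S$ form a convergence-determining $\pi$-system and the radial marginal of the limit is atomless); (ii) when combining uniform integrability with weak convergence, truncate with the continuous bounded function $(r^2\wedge K^2)\lip\theta_X,\theta_Y\rip$ rather than $r^2 I_{r\le K}\lip\theta_X,\theta_Y\rip$, so that the portmanteau theorem applies directly.
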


\begin{proof}
Considering $f: L^2 \times L^2 \to \mbR$ defined by $(x, y) \to \lip x, y \rip I_{\|x\| \vee \|y\|>1}$, we have that
\begin{align*}
&E\lb \lip {b(n)}^{-1}X, {b(n)}^{-1}Y \rip | \|X\| \vee \|Y\| >{b(n)}\rb \\
&=\frac{1}{{\rm pr}(\|X\|\vee\|Y\|>{b(n)})} E\lb \lip {b(n)}^{-1}X, {b(n)}^{-1}Y \rip I_{\|X\| \vee \|Y\| >{b(n)}}\rb\\
&=\frac{1}{n{\rm pr}(\|X\|\vee\|Y\|>{b(n)})}\int_{L^2 \times L^2} f(x,y) \ n{\rm pr}({b(n)}^{-1}X \in dx, {b(n)}^{-1}Y \in dy).
\end{align*}
By~\eqref{eq:mu1}, we have that $n{\rm pr}(\|X\|\vee\|Y\|>{b(n)}) \to 1$.
The function $f$ vanishes on a neighborhood of
$(\bzero, \bzero)$ in $L^2 \times L^2$. Also, the discontinuity set of $f$ is the
boundary of $\mathcal A_1 = \{(x,y) \in L^2 \times L^2: \|x\| \vee \|y\| \ge 1\}$,
and it follows from Lemma~\ref{l:mu-conti}
that $\mu(\partial \mathcal A_1)=0$.
Therefore, by~\eqref{eq:def-RVXY} and an extension of Lemma A.1 
of~\citet{meinguet:segers:2010} (see Lemma~\ref{l:cont_m0}),  
we get the claim. 
\end{proof}

Based on Proposition~\ref{p:sglim}, we propose an estimator for $\sg_{XY}$ defined by
\begin{equation} \label{eq:ec}
\hat{\sg}_{n,k} =\frac{1}{k}\sum_{i=1}^n \lip \frac{X_i}{R_{(k)}}, \frac{Y_i}{R_{(k)}} \rip I_{R_i \ge R_{(k)} },
\end{equation}
where $[X_i, Y_i]^{\top}, 1 \le i \le n$, are i.i.d. copies of $[X,Y]^{\top}$,   $R_i:=\|X_i\| \vee \|Y_i\|$ and $R_{(k)}$ is the $k$th largest order statistic with the convention $R_{(1)} = \max \{ R_1, \ldots, R_n \}$. An estimator for $\rho_{XY}$ is then defined by
\begin{equation} \label{eq:ecc}
\hat{\rho}_{n,k} = \frac{\sum_{i=1}^n \lip X_i, Y_i \rip}{(\sum_{i=1}^n \|X_i\|^2)^{1/2} (\sum_{i=1}^n \|Y_i\|^2)^{1/2}} \ I_{R_i \ge R_{(k)} }.
\end{equation}

These estimators take only the $k$ largest pairs of $[X_i, Y_i]^{\top}$, $1\le i\le n$, based on their norm, i.e., $\|X_i\| \vee \|Y_i\|$, as inputs. This approach falls into so-called peaks-over-threshold framework in that it relies only on $k$ largest observations whose magnitude exceeds a certain threshold. Asymptotic properties in this framework are typically derived as $k$ goes to infinity with $n$, in such a way that $k/n \to 0$. We assume throughout the paper that this condition holds.

We will work under the following assumption.

\begin{assumption} \label{a:L2L2}
The bivariate random element $[X,Y]^{\top}$ in  $L^2 \times L^2$ has mean zero and is regularly varying with index $-\ag$, $\ag >2$. The observations $[X_1,Y_1]^{\top}, [X_2,Y_2]^{\top}, \ldots$ are independent copies of $[X,Y]^{\top}$.
\end{assumption}

We state in the following theorem that the estimator $\hat{\sg}_{n,k}$ is consistent for the extremal covariance. All proofs of the theoretical results introduced in this section are presented in Sections~\ref{s:pre} and~\ref{s:pr} of the Supplementary material, as they require a number of preliminary  results and technical arguments.

\begin{theorem}\label{t:cs}
Under Assumption~\ref{a:L2L2},
$
\hat{\sg}_{n,k} \convP \sg_{XY}$,
where $\hat{\sg}_{n,k}$ and $\sg_{XY}$ are defined in \eqref{eq:ec} and \eqref{eq:cov}, respectively.
\end{theorem}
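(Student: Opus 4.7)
The plan is to combine Proposition~\ref{p:sglim} with empirical-process concentration after replacing the random threshold $R_{(k)}$ by a deterministic one. Let $R = \|X\|\vee\|Y\|$. By Lemma~\ref{l:BiRV} applied to $\mbB = L^2$, $R$ is regularly varying with index $-\ag$, so I choose a deterministic sequence $u_n = b(n/k)$ satisfying $(n/k)\,{\rm pr}(R > u_n) \to 1$. Standard order statistic results for regularly varying tails then give $R_{(k)}/u_n \convP 1$. Writing $g(x,y) = \lip x,y\rip I_{\|x\|\vee\|y\| \ge 1}$, the estimator takes the form $\hat{\sg}_{n,k} = k^{-1}\sum_{i=1}^n g(X_i/R_{(k)}, Y_i/R_{(k)})$, and the strategy is to prove (i) $\tilde{\sg}_{n,k} := k^{-1}\sum_{i=1}^n g(X_i/u_n, Y_i/u_n) \convP \sg_{XY}$, and (ii) $\hat{\sg}_{n,k} - \tilde{\sg}_{n,k} \convP 0$.

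For (i), Proposition~\ref{p:sglim} gives $E[\tilde{\sg}_{n,k}] = (n/k)\,E[g(X/u_n, Y/u_n)] \to \sg_{XY}$ once one extends the proposition from bounded continuous integrands to the square-integrable $g$ by using the polar scaling identity~\eqref{eq:mu-s} together with $\ag > 2$. To handle the variance I truncate: for $T > 1$ set $g_T(x,y) = g(x,y)\, I_{\|x\|\vee\|y\| \le T}$ and split $g = g_T + (g - g_T)$. The truncated piece $g_T$ is bounded, vanishes on a neighborhood of $(\bzero, \bzero)$, and is continuous except on a $\mu$-null set by Lemma~\ref{l:mu-conti}; the abstract Bernstein-type inequality mentioned in the introduction, applied to the bounded summands $g_T(X_i/u_n, Y_i/u_n)$, yields concentration of $k^{-1}\sum_i g_T(X_i/u_n, Y_i/u_n)$ around $\int g_T\,d\mu$. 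For the tail part, the crude pointwise bound $|g - g_T|(x,y) \le \|(x,y)\|_{\mbB^2}^2 I_{\|x\|\vee\|y\|>T}$ combined with the scaling~\eqref{eq:mu-s} gives $\int \|(x,y)\|_{\mbB^2}^2 I_{\|x\|\vee\|y\|>T}\,\mu(dx,dy) = O(T^{2-\ag}) \to 0$, and a Potter-bound argument transfers this to a uniform-in-$n$ bound $\sup_n (n/k)\,E\lb|g - g_T|(X/u_n,Y/u_n)\rb \to 0$ as $T\to\infty$. Letting $n\to\infty$ first and $T\to\infty$ afterwards yields (i).

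For (ii), the 2-homogeneity of the inner product gives the decomposition $\hat{\sg}_{n,k} = (u_n/R_{(k)})^2 \cdot (k u_n^2)^{-1}\sum_{i=1}^n \lip X_i, Y_i\rip I_{R_i \ge R_{(k)}}$, so the multiplicative factor $(u_n/R_{(k)})^2 \convP 1$ handles the normalization. The remaining difference $(k u_n^2)^{-1}\sum_i \lip X_i, Y_i\rip(I_{R_i \ge R_{(k)}} - I_{R_i \ge u_n})$ is supported on the random annulus between $R_{(k)}$ and $u_n$; combining $R_{(k)}/u_n \convP 1$ with a uniform version of (i) over thresholds in a small multiplicative neighborhood of $u_n$ shows this contribution is $o_p(1)$.

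The main obstacle is the unboundedness of the integrand $\lip x,y\rip$ on the domain $\{\|x\|\vee\|y\|\ge 1\}$. The $M_0$ framework of~\citet{hult:lindskog:2006} is naturally set up for bounded continuous functions vanishing near $\bzero$, whereas here the integrand grows like the squared norm. This is what forces the truncation and uniform-integrability steps and makes the hypothesis $\ag > 2$ essential, since it is precisely the condition for the squared norm to be $\mu$-integrable on sets bounded away from $\bzero$. Both the mean convergence and the Bernstein-based concentration must be carried out in the truncated form, with the tail controlled separately via the scaling~\eqref{eq:mu-s} and Potter bounds. Once these are in place, the passage from $u_n$ to $R_{(k)}$ is conceptually routine but requires uniform control of step (i) in a small neighborhood of $u_n$, because $R_{(k)}$ enters both as the indicator threshold and as the denominator inside the summands.
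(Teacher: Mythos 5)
Your proposal follows essentially the same route as the paper: replace the random threshold $R_{(k)}$ by the deterministic $b(n/k)$, establish convergence of the deterministic-threshold statistic by combining the mean convergence from Proposition~\ref{p:sglim} with a Bernstein/McDiarmid-type concentration bound in which the unbounded integrand is controlled through truncation and the second-moment tail estimate $\tfrac{n}{k}E[(R/b(n/k))^2 I_{R\ge Mb(n/k)}]\to\tfrac{\ag}{\ag-2}M^{2-\ag}$ (the paper's Lemma~\ref{l:alM}), and then transfer back to $R_{(k)}$ via $R_{(k)}/b(n/k)\convP 1$ together with control of the sums of $(R_i/b(n/k))^2$ over the random annulus (the paper's Lemma~\ref{l:22} and Proposition~\ref{p:hat}). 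The only organizational difference is that you truncate before applying the concentration inequality whereas the paper applies it to the full statistic and handles the centering term by a separate variance calculation, but the essential ingredients and the role of $\ag>2$ are identical.
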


The consistency of $\hat{\rho}_{n,k}$ for $\rho_{XY}$ follows from  Theorem~\ref{t:cs} and Slutsky's theorem.

\begin{corollary}\label{c:cs}
Under Assumption~\ref{a:L2L2},
$\hat{\rho}_{n,k} \convP \rho_{XY}$,
where $\hat{\rho}_{n,k}$ and $\rho_{XY}$  are defined in \eqref{eq:ecc} and
\eqref{eq:cor}, respectively.
\end{corollary}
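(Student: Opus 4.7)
The plan is to write $\hat{\rho}_{n,k}$ as a continuous function of three sample quantities, each of which converges in probability to its population analog, and then invoke the continuous mapping theorem.

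First I would rewrite the estimator by multiplying numerator and denominator each by $1/(k R_{(k)}^2)$, obtaining
\[
\hat{\rho}_{n,k} \;=\; \frac{\hat{\sigma}_{n,k}}{\hat{\tau}_{X,n,k}^{1/2}\,\hat{\tau}_{Y,n,k}^{1/2}},
\]
where $\hat{\sigma}_{n,k}$ is exactly the estimator in \eqref{eq:ec}, and
\[
\hat{\tau}_{X,n,k} := \frac{1}{k}\sum_{i=1}^n \Big\|\frac{X_i}{R_{(k)}}\Big\|^2 I_{R_i \ge R_{(k)}}, \qquad
\hat{\tau}_{Y,n,k} := \frac{1}{k}\sum_{i=1}^n \Big\|\frac{Y_i}{R_{(k)}}\Big\|^2 I_{R_i \ge R_{(k)}}.
\]
The natural population limits of $\hat{\tau}_{X,n,k}$ and $\hat{\tau}_{Y,n,k}$ are $\sigma_X^2$ and $\sigma_Y^2$ from Definition~\ref{d:cor}.

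By Theorem~\ref{t:cs}, $\hat{\sigma}_{n,k} \convP \sigma_{XY}$. For the denominator terms, I would reuse the proof of Theorem~\ref{t:cs} with the functional $(x,y) \mapsto \|x\|^2 I_{\|x\| \vee \|y\| \ge 1}$ in place of $(x,y) \mapsto \langle x, y \rangle I_{\|x\| \vee \|y\| \ge 1}$ (and analogously for $\|y\|^2$). The two structural ingredients that drive that proof carry over unchanged: the new integrand still vanishes in a neighborhood of $(\bzero,\bzero)$, and its discontinuity set is contained in $\partial \mathcal{A}_1$, which has $\mu$-measure zero by Lemma~\ref{l:mu-conti}. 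The second moment requirement is still supplied by the assumption $\alpha > 2$ in Assumption~\ref{a:L2L2}. This yields $\hat{\tau}_{X,n,k} \convP \sigma_X^2$ and $\hat{\tau}_{Y,n,k} \convP \sigma_Y^2$.

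Since the one-dimensional marginals of $\mu$ are non-degenerate, $\sigma_X$ and $\sigma_Y$ are strictly positive, so the map $(a,b,c) \mapsto a/(b^{1/2} c^{1/2})$ is continuous at $(\sigma_{XY}, \sigma_X^2, \sigma_Y^2)$. The continuous mapping theorem (equivalently, Slutsky's theorem) then delivers $\hat{\rho}_{n,k} \convP \sigma_{XY}/(\sigma_X \sigma_Y) = \rho_{XY}$. The only part of the argument that requires genuine verification rather than a direct appeal to Theorem~\ref{t:cs} is that the proof of that theorem transfers verbatim to the squared-norm integrands; this is the main (mild) obstacle, but it should be routine since the proof relies only on generic $M_0$-continuity properties of the integrand and not on the bilinear structure of $\langle x, y \rangle$.
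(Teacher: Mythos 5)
Your proposal is correct and follows essentially the same route as the paper, which disposes of the corollary in one line by citing Theorem~\ref{t:cs} and Slutsky's theorem. You have simply made explicit the step the paper leaves implicit, namely that the denominator terms converge by rerunning the argument of Theorem~\ref{t:cs} with the integrands $\|x\|^2$ and $\|y\|^2$ (for which the same bound by $(\|x\|\vee\|y\|)^2$ and the same $\mu$-continuity of $\partial\mathcal A_1$ apply), together with positivity of $\sigma_X$ and $\sigma_Y$ from the non-degeneracy of the marginals.
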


We end this section with a discussion on the condition $\ag>2$ in Assumption~\ref{a:L2L2}. This requirement ensures the existence of the second moments, $E \|X\|^2$ and $E \|Y\|^2$, allowing us to define $\sg_{XY}$. If one wishes to avoid the condition on $\ag$, the following alternative measure can be considered:
\[
\ga_{XY} := \int_{\mbS_{L^2 \times L^2}} \lip \thg_X, \thg_Y \rip \Gg(d\thg_X, d\thg_Y),
\]
which corresponds to the angular density factor in~\eqref{eq:cov_pol}.
This measure depends only on the angular measure $\Gg$,
and is therefore defined for any $\ag>0$. It can be regarded as
an extension of the {\it extremal dependence measure}
of~\citet{larsson:resnick:2012} to paired functional samples,
since their measure is also based on an angular measure.
An estimator for $\ga_{XY}$ can be defined as
$\hat{\ga}_{n,k} = \frac{1}{k} \sum_{i=1}^n
\lip X_i/R_i, Y_i/R_i \rip I_{R_i \ge R_{(k)}}$,
and its consistency can be proven in almost the same manner
as  the proof of Corollary 4.2 of~\citet{clemenccon:huet:sabourin:2023}.
The measure $\gamma_{XY}$ provides a convenient way
to quantify the similarity in the shapes of extreme curves without requiring $\ag>2$.
However, $\gamma_{XY}$ does not account for the frequency of occurrence
of extreme $X$ or $ Y$ as $\sg_{X, Y}$ does.

\section{A simulation study} \label{s:pw}
In this simulation study, we demonstrate that the proposed estimator
$\hat{\rho}_{n,k}$ consistently estimates the extremal correlation coefficient.
Before proceeding, we explain how $\hat{\rho}_{n,k}$ can be computed when curves are observed at discrete points. This will be applied throughout Sections~\ref{s:pw} and~\ref{s:dt}.

Assume that curves are observed on the regularly spaced grid $\{j/J, \ j \in \{1, \ldots, J\} \}$ on $[0,1]$, with each point assigned equal weight. The inner product and norm in $L^2([0,1])$ are then
\begin{equation} \label{eq:inner}
\lip x,y\rip = \frac{1}{J} \sum_{j=1}^Jx(j/J)y(j/J), \quad \|x\| = \lb \frac{1}{J} \sum_{j=1}^Jx(j/J)^2 \rb^{1/2}, \quad x,y \in L^2([0,1]).
\end{equation}
Even if the curves are observed at irregularly spaced or different  grids, or contain missing values, they can be  reconstructed on a regularly spaced grid, see e.g., Chapters 1 and 7 in \citet{KRbook}.
Using~\eqref{eq:inner}, $\hat{\rho}_{n,k}$ can be computed in  the following steps:

\medskip

\noindent Step 1. Verify if $\|X\|$ and $\|Y\|$ are regularly varying, for example, by examining whether their  Hill plots  exhibit stable regions.

\medskip

\noindent Step 2. Estimate the tail indexes of $\|X\|$ and $\|Y\|$  using the Hill estimator. For this, we use the {\tt mindist} function from the {\tt R} package {\tt tea}.

\medskip

\noindent Step 3. If the tail index estimates from Step 2 are not close to each other, apply a transformation to make $X$ and $Y$ tail equivalent. One approach is the power transformation discussed on page 310 of~\citet{resnick:2007}. Given $X \in RV(-\ag_X, \Gg_X)$ and $Y \in RV(-\ag_Y, \Gg_Y)$, consider the transformation
\begin{equation} \label{eq:tr}
g_X(x) = \frac{x}{\|x\|^{1-{\ag_X}/\ag}}; \ \ \ \ g_Y(y) = \frac{y}{\|y\|^{1-{\ag_Y}/\ag}},\ \ \ \ x,y \in L^2,
\end{equation}
where $\ag$ is a desired tail index. Applying $g_X$ and $g_Y$ to $X$ and $Y$,
respectively, ensures that $P(\|g_X(X)\|>\cdot)$ and $P(\|g_Y(Y)\|>\cdot)$
are regularly varying with the same index $-\ag$. Since this method adjusts
only the scale of curves, their shapes are preserved.

\medskip

\noindent Step 4. Given tail equivalent marginals, take the $k$ largest pairs of $[X_i, Y_i]^{\top}$, based on their norm, i.e., $R_i = \|X_i\| \vee \|Y_i\|$, and then compute $\hat{\rho}_{n,k}$ as in~\eqref{eq:ecc}.  By Lemma~\ref{l:R} (i), if $[X, Y]^{\top}$ are regularly varying in $L^2 \times L^2$ with index $-\alpha$, then $R = \|X\| \vee \|Y\|$ is regularly varying in $\mathbb{R}_+$ with the same index. Therefore, we choose $k$ that results in successful tail estimation for $R$ in finite samples. In the literature on tail estimation, various methods for selecting $k$ have been introduced, e.g.,~\citet{hall:Welsh:1985},~\citet{hall:1990},~\citet{drees:kaufmann:1998},~\citet{danielsson:deHaan:Peng:deVries:2001}, just to name a few.  We use  the method of~\citet{danielsson:Ergun:deHaan:deVries:2016}, which  is implemented using the function {\tt mindist} of the {\tt R} package {\tt tea}.

\medskip

We now outline the design of our simulation study. We generate functional observations in such a way that the theoretical value of $\rho_{XY}$ can be computed analytically, so that we can see how close $\hat{\rho}_{n,k}$ is to the true value. Suppose that $Z_1$ and $Z_2$ are i.i.d. random variables in $\mbR$ satisfying ${\rm pr}(|Z_{1}|>z) = z^{-\ag}$ with equal chance of $Z_1$ being either negative or positive. Also, let $N_1$, $N_2$, and $N_3$ be i.i.d. normal random variables in $\mbR$ with mean 0 and variance 0.5. Consider $\{\phi_j, j \ge 1 \}$ defined by~\eqref{eq:basis} and recall that it is an  orthonormal basis in $L^2([0,1])$. These functions are simulated on a grid of 100 equally--spaced points on the unit interval $[0,1]$. We consider the following data generating processes, for $-1 \le \rho \le 1$,
\begin{align} \label{eq:XY}
X(t) &= Z_{1}\phi_1(t) + N_1\phi_2(t) +  N_2\phi_3(t); \\ \nonumber
Y(t) &= \rho Z_{1}\phi_1(t) + (1-\rho^2)^{1/2} Z_{2}\phi_2(t) + N_3\phi_3(t).
\end{align}
It generates extreme curves dominated by the shape of the functional axis $\phi_1$ for $X$ and  by either $\phi_1$ or $\phi_2$ for $Y$.  The following lemma gives an analytic formula for $\rho_{XY}$. Its proof (and a slightly more general result) is provided in Section~\ref{s:pr_sim} of Supplementary material.

\begin{lemma} \label{l:sim1} Let $[Z_1, Z_2]^{\top}$ be a random vector in $\mbR^2$ consisting of i.i.d. components $Z_j$ whose magnitude is regularly varying with $-\ag$, $\ag>2$, i.e., for some $c_{+}$, $c_{-}\ge 0$,
\[
{\rm pr}(Z_1 >z) \sim c_{+}z^{-\ag}, \quad\quad {\rm pr}(Z_1 <-z) \sim c_{-}z^{-\ag},
\]
where $f(z) \sim g(z)$ if and only if $\lim_{z\to \infty}f(z)/g(z)=1$.
Also, let $\{\phi_j,\ j \ge 1\}$ be a set of orthonormal elements in $\mbS_{L^2}$. Then, for  $X$ and $Y$ in \eqref{eq:XY},
\[
\rho_{XY} = \frac{\rho}{\{\rho^2 + (1-\rho^2)^{\ag/2}\}^{1/2}}.
\]
\end{lemma}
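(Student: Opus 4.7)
The plan is to reduce $\rho_{XY}$ to an explicit computation via the joint angular measure $\Gg$ on $\mbS_{L^2 \times L^2}$, using the polar identity~\eqref{eq:cov_pol} together with its analogs for $\sg_X^2$ and $\sg_Y^2$. First I discard the Gaussian noise. Writing $(X,Y) = (\tilde X, \tilde Y) + (N_1 \phi_2 + N_2 \phi_3,\, N_3 \phi_3)$ with $\tilde X = Z_1 \phi_1$ and $\tilde Y = \rho Z_1 \phi_1 + (1-\rho^2)^{1/2} Z_2 \phi_2$, the $L^2$-norm of the remainder is Gaussian and has all moments, so $n\,{\rm pr}(\|N\|/b(n) > \eg) \to 0$ for every $\eg>0$. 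A converging-together argument inside the $M_0$ framework of~\citet{hult:lindskog:2006} then shows that $(X,Y)$ and $(\tilde X,\tilde Y)$ have the same exponent measure $\mu$, and hence the same $\sg_{XY}$, $\sg_X$, $\sg_Y$.

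Next I identify $\Gg$ by pushing the regular variation of the iid heavy-tailed pair $(Z_1, Z_2)$ through the continuous linear map $T:\mbR^2 \to L^2 \times L^2$ given by $T(z_1,z_2) = (z_1 \phi_1,\, \rho z_1 \phi_1 + (1-\rho^2)^{1/2} z_2 \phi_2)$. Independence forces the joint exponent measure of $(Z_1,Z_2)$ (in the max norm on $\mbR^2$) to concentrate on the four coordinate rays with masses proportional to $c_+$ or $c_-$, since the joint-extreme set $\{|Z_1| > b(n),\, |Z_2| > b(n)\}$ has probability $O(n^{-2})$. The pushforward $\mu$ is therefore supported on two lines: the ray $\{(t\phi_1, \rho t \phi_1)\}$, whose $\|\cdot\|_{L^2 \times L^2}$-norm equals $|t|$ (since $|\rho| \le 1$), and the ray $\{(0, (1-\rho^2)^{1/2} t \phi_2)\}$, whose norm equals $(1-\rho^2)^{1/2} |t|$. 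Hence $\Gg$ consists of four atoms $\og_{\pm 1} = \pm(\phi_1,\rho \phi_1)$ and $\og_{\pm 2} = \pm(0, \phi_2)$; using the scaling property~\eqref{eq:mu-s} together with the normalization~\eqref{eq:mu1}, their $\Gg$-masses are $c_\pm / C$ at $\og_{\pm 1}$ and $c_\pm (1-\rho^2)^{\ag/2} / C$ at $\og_{\pm 2}$, where $C := (c_+ + c_-)\{1 + (1-\rho^2)^{\ag/2}\}$.

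With $\Gg$ in hand, evaluation of~\eqref{eq:cov_pol} and its analogs is elementary: $\lip \thg_X, \thg_Y \rip$ equals $\rho$ at $\og_{\pm 1}$ and $0$ at $\og_{\pm 2}$, while $\|\thg_X\|^2$ is $1$ at $\og_{\pm 1}$ and $0$ at $\og_{\pm 2}$, and $\|\thg_Y\|^2$ is $\rho^2$ at $\og_{\pm 1}$ and $1$ at $\og_{\pm 2}$. Combining with the atom masses gives $\sg_{XY} = \{\ag/(\ag-2)\}\, \rho /\{1+(1-\rho^2)^{\ag/2}\}$, $\sg_X^2 = \{\ag/(\ag-2)\}/\{1+(1-\rho^2)^{\ag/2}\}$, and $\sg_Y^2 = \{\ag/(\ag-2)\}\{\rho^2+(1-\rho^2)^{\ag/2}\}/\{1+(1-\rho^2)^{\ag/2}\}$. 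The prefactors $\ag/(\ag-2)$ and the $(c_+,c_-)$-dependence cancel in $\sg_{XY}/(\sg_X \sg_Y)$, producing exactly $\rho/\{\rho^2 + (1-\rho^2)^{\ag/2}\}^{1/2}$. The main obstacle is the rigor of the noise-removal step: making it precise inside $M_0$ convergence on the infinite-dimensional space $L^2 \times L^2$ requires a continuous-mapping-plus-converging-together argument rather than a direct vague-convergence calculation, but the Gaussian tails ensure negligibility at every relevant scale so the extension is straightforward.
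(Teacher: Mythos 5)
Your proof is correct, and it follows the route that the paper itself only sketches in one short paragraph \emph{after} its main argument: identify the joint angular measure $\Gg$ explicitly and evaluate $\sg_{XY}$, $\sg_X^2$, $\sg_Y^2$ through the polar factorization~\eqref{eq:cov_pol}. The paper's primary proof instead stays on $\mbR_+^2$: it writes down the exponent measure $\nu$ of $(|Z_1|,|Z_2|)$ (concentrated on the axes by independence, normalized via the set $\{|z_1|\vee(\rho^2z_1^2+(1-\rho^2)z_2^2)^{1/2}>1\}$) and computes each of the three quantities as a limit of conditional expectations using Proposition~\ref{p:sglim} and vague convergence. The two computations are equivalent, but yours has some advantages: tracking the four atoms $\pm(\phi_1,\rho\phi_1)$ and $\pm(\bzero,\phi_2)$ with masses $c_\pm/C$ and $c_\pm(1-\rho^2)^{\ag/2}/C$ makes it transparent that the answer is independent of the asymmetry constants $c_+$, $c_-$ (the paper implicitly sets $c_++c_-=1$ by working with $(|Z_1|,|Z_2|)$), and it isolates the tail factor $\ag/(\ag-2)$ once rather than recomputing it three times. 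You are also more careful than the paper about the noise-removal step — the paper simply asserts that the $N_j$ terms ``do not affect the extremal behavior,'' whereas you note that a converging-together argument in $M_0(L^2\times L^2)$ is needed to conclude that $(X,Y)$ and $(\tilde X,\tilde Y)$ share the same exponent measure; since $\sg_{XY}$, $\sg_X$, $\sg_Y$ are defined purely through $\mu$, that identification suffices. The only cosmetic caveat is the degenerate case $\rho=\pm1$, where the second ray collapses; the formula still holds trivially there.
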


We consider $\rho_{XY} \in \{0, \pm0.1, \pm0.2, \ldots, \pm 0.9, \pm 1 \}$ and $\ag \in\{3,4,5\}$, from which values of $\rho$ can be obtained by Lemma~\ref{l:sim1}. For each $\rho$, we generate $[X_i, Y_i]^{\top}$, $1 \le i \le n$, that are i.i.d. copies of $[X,Y]^{\top}$, with sample sizes $n \in \{100, 500, 2000\}$. In each case, 1000 replications are generated. By Proposition 7.1 and Example 7.3 of~\citet{meinguet:segers:2010}, $X$ and $Y$ in~\eqref{eq:XY} are regularly varying with the same index $-\ag$, so we proceed directly to Step 4 and compute $\hat{\rho}_{n,k}$. When choosing $k$, we also consider an alternative approach based on the Kolmogorov–Smirnov (KS) distance, as introduced by~\citet{clauset:shalizi:newman:2009}. The key difference from~\citet{danielsson:Ergun:deHaan:deVries:2016} is that~\citet{clauset:shalizi:newman:2009} computes the distance from tail distributions, rather than tail quantiles. The method is implemented using {\tt poweRlaw} {\tt R} package.

\begin{table}[t]
\centering
\caption{The magnitude of empirical biases (standard errors) of $\hat{\rho}_{n,k}$ when $\ag=3$. Optimal $k$s are selected using the method from~\citet{danielsson:Ergun:deHaan:deVries:2016}, with averages of $k=8$ ($N=100$), $k=26$ ($N=500$), and  $k=63$ ($N=2000$).
\label{t:sim2}}
\medskip
\begin{tabular}{cccc}
  \hline\hline
 $\rho_{XY}$ & $N=100$ &  $N=500$ & $N=2000$  \\
  \hline
-1.0 & 0.04 (0.03) & 0.02 (0.01) & 0.01 (0.01) \\
-0.9 & 0.06 (0.13) & 0.04 (0.09) & 0.03 (0.09) \\
-0.8 & 0.07 (0.16) & 0.04 (0.13) & 0.03 (0.12) \\
-0.7 & 0.04 (0.18) & 0.02 (0.15) & 0.01 (0.14) \\
-0.6 & 0.03 (0.19) & 0.02 (0.16) & 0.01 (0.14) \\
-0.5 & 0.01 (0.19) & 0.00 (0.16) & 0.01 (0.15) \\
-0.4 & 0.01 (0.18) & 0.00 (0.15) & 0.01 (0.15) \\
-0.3 & 0.01 (0.16) & 0.01 (0.13) & 0.02 (0.14) \\
-0.2 & 0.02 (0.14) & 0.02 (0.12) & 0.01 (0.11) \\
-0.1 & 0.01 (0.09) & 0.01 (0.08) & 0.01 (0.09) \\
0.0 & 0.00 (0.06) & 0.00 (0.04) & 0.00 (0.03) \\
0.1 & 0.01 (0.10) & 0.00 (0.08) & 0.01 (0.09) \\
0.2 & 0.01 (0.14) & 0.02 (0.11) & 0.02 (0.11) \\
0.3 & 0.02 (0.16) & 0.02 (0.15) & 0.01 (0.13) \\
0.4 & 0.00 (0.18) & 0.02 (0.15) & 0.01 (0.15) \\
0.5 & 0.00 (0.19) & 0.00 (0.16) & 0.00 (0.15) \\
0.6 & 0.04 (0.18) & 0.02 (0.16) & 0.00 (0.15) \\
0.7 & 0.04 (0.17) & 0.03 (0.15) & 0.01 (0.14) \\
0.8 & 0.06 (0.16) & 0.03 (0.12) & 0.03 (0.13) \\
0.9 & 0.06 (0.12) & 0.04 (0.10) & 0.03 (0.09) \\
1.0 & 0.04 (0.03) & 0.02 (0.01) & 0.01 (0.01) \\
\hline\hline
\end{tabular}
\end{table}

We report the magnitude of empirical biases, measured by the absolute difference between the average and the theoretical value, along with standard errors computed as the sample standard deviations.
Using the optimal $k$s selected by the method from~\citet{danielsson:Ergun:deHaan:deVries:2016}, the results are shown in Table~\ref{t:sim2} for $\alpha = 3$. The results for $\ag \in \{4,5\}$, as well as those using the method from \citet{clauset:shalizi:newman:2009}, are provided in Section~\ref{ss:a_sim} of Supplementary material, as they lead to similar conclusions. The conclusions are summarized as follows.

\noindent (i) The estimator is consistent as the biases decrease with increasing sample sizes, across nearly all values of $\rho_{XY}$ and for all $\ag \in \{3,4,5\}$ considered. However, the finite performance using the optimal $k$s appears to depend on the tail index $\ag$, with smaller $\ag$ resulting in small biases.

\noindent (ii) The bias tends to increase in magnitude as $|\rho_{XY}|$ approaches 1. This could be due to the effect of the boundary, $\rho_{XY} \in \{-1, 1\}$. These barriers cause the bias to increase, although it gets lower again at the boundary.

\noindent (iii) The standard errors are observed  to be non-uniform across $\rho_{XY}$, they roughly behave like a quadratic function of $\rho_{XY}$ with its peak at $\pm.5$.

Additional information about
$\hat\rho_{n,k}$ is provided in the online Supplementary material.
Specifically, Section
\ref{ss:a_sim} examines the effect of the
selection of $k$, Section \ref{ss:a_chi} explores the relation to the extremal measures $\chi$ and $\bar{\chi}$ introduced by \citet{coles:heffernan:tawn:1999}, and Section \ref{ss:a_phv} investigates the effect of phase variation.

\section{Applications to financial and climate functional data} \label{s:dt}

\subsection{Extremal dependence of  intraday returns
on sector ETFs} \label{ss:etf}

In this section, we study pairwise extremal dependence of cumulative intraday return curves (CIDRs) of  Exchange Traded Funds (ETFs) reflecting
performance of key sectors of the U.S. economy. We work with  nine
Standard \& Poor's  Depositary Receipt ETFs listed in Table~\ref{t:etf}.
Our objective is to
measure the tendency of paired CIDRs to exhibit similar extreme daily trajectories during the market decline caused by the covid-19 pandemic.
The CIDRs are defined as follows. Denote by $P_i(t)$ the price of an asset
on trading day $i$ at time $t$. For the assets in our example, $t$ is time in minutes between 9:30 and 16:00 EST (NYSE opening times) rescaled to the unit interval $(0,1)$. The CIDR on day $i$ is  the curve $R_i(t) = \ln P_i(t) - \ln P_i(0)$, $t\in [0,1]$, where $P_i(0)$ is the opening price on day $i$. The curves $R_i$ show how the return accumulates over the trading day, see  Figure~\ref{f:pairs}. We consider all full trading days between  Jan 02, 2020 and  July 31, 2020 ($N=147$).

\begin{table}[t]
\centering
\caption{The nine sector ETFs and their corresponding tail index estimates $\hat{\alpha}$.}
\label{t:etf}
\medskip
\begin{tabular}{llcllc}
  \hline\hline
Ticker & Sector & $\hat{\alpha}$ & Ticker & Sector & $\hat{\alpha}$ \\
\hline
XLY & Consumer & 3.8 & XLV & Health Care & 3.9 \\
    & Discretionary && XLI & Industrials & 3.7 \\
XLP & Consumer Staples & 2.6 & XLB & Materials & 3.4 \\
XLE & Energy  & 4.2 & XLK & Technology  & 4.7 \\
XLF & Financials & 4.0 & XLU & Utilities  & 3.8 \\
\hline\hline
\end{tabular}
\end{table}

\begin{figure}[t]
\begin{centering}
\includegraphics[width=1\textwidth]{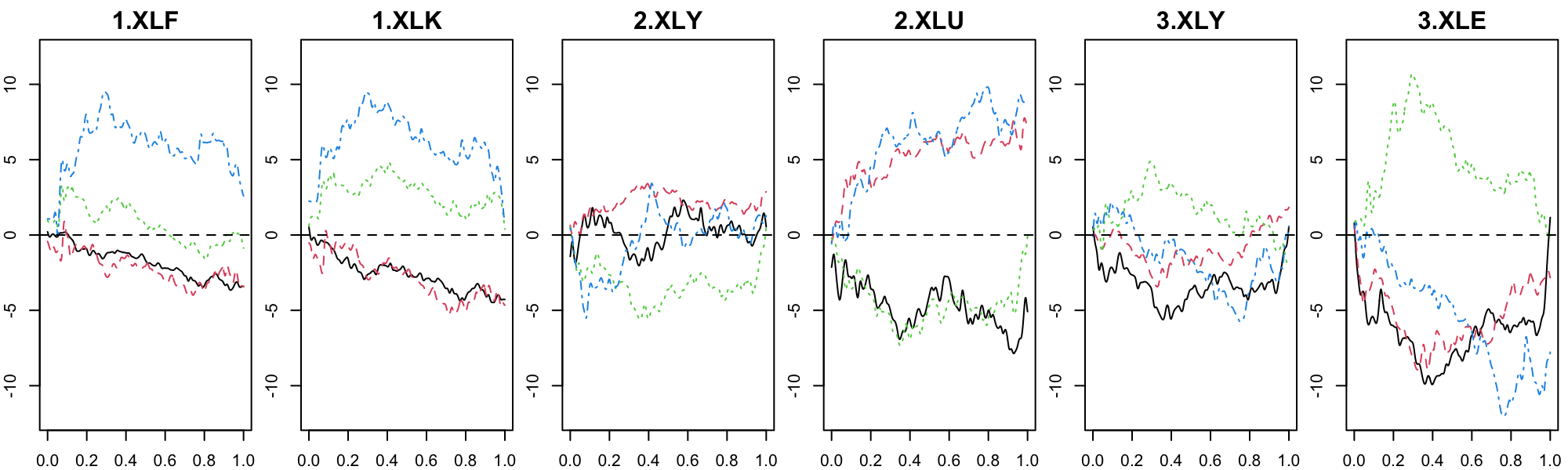}
\par\end{centering}
\caption{The CIDR of three pairs of ETFs (1.XLF and XLK, 2.XLY and XLU, 3.XLY and XLE). For each pair, the curves representing the four most extreme days are displayed, with matching colors and line types indicating curves from the same day.
\label{f:pairs}}
\end{figure}

We follow the step-by-step guide for estimating $\rho_{XY}$ presented in Section~\ref{s:pw}. First, for each sector, we center the curves around their sample mean functions, $\bar{R}_N (t) = \frac{1}{N}\sum_{i=1}^N R_i(t)$, and compute its norm $\|R_i(t) -\bar{R}_N (t)\|$ using~\eqref{eq:inner} with $J = 390$. We then examine whether the Hill plots of the norms for each sector exhibit stable regions. As shown in Fig~\ref{f:Hill_e} of Supplementary material, the norms appear regularly varying, so we compute the tail estimates $\hat\ag$ for each sector using the Hill estimator, as shown in Table~\ref{t:etf}. Since the sectors are not tail equivalent, we apply the power transformation~\eqref{eq:tr} to achieve tail equivalence with $\ag=3$. This choice yields small biases in finite samples, as shown in Section~\ref{s:pw}. Next, we use the methods from~\citet{danielsson:Ergun:deHaan:deVries:2016} to determine the optimal $k$ for estimating $\rho_{XY}$ for each pair across the sectors.

\begin{figure}[t]
\begin{centering}
\includegraphics[width=.65\textwidth]{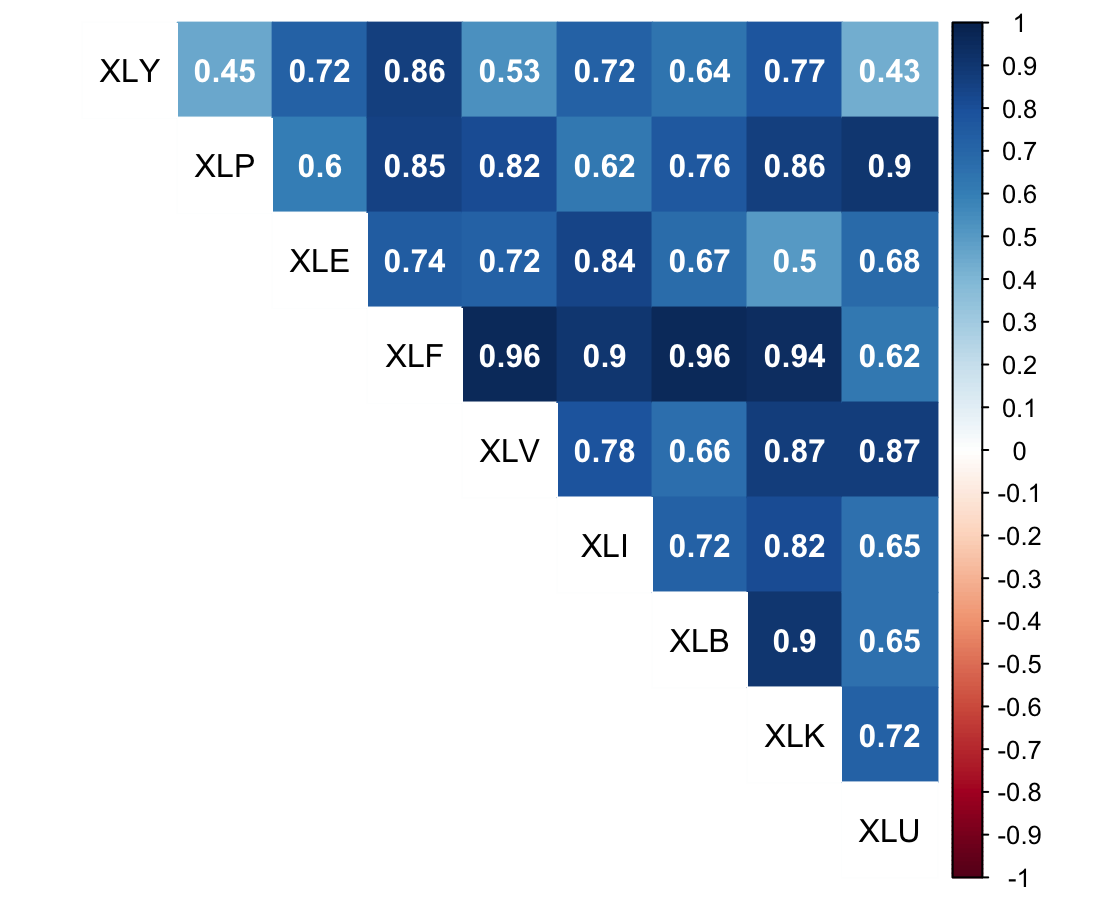}
\par\end{centering}
\caption{Estimates of the pairwise extremal correlation coefficients of CIDRs across the nine sectors.
\label{f:ecc_cidr}}
\end{figure}

Figure~\ref{f:ecc_cidr} shows estimates of the pairwise extremal correlation coefficient across the nine ETF sectors. All pairs exhibit positive extremal correlations ($\hat{\rho}_{n,k} = 0.43 \sim 0.96$), and 44\% of the pairs have strong extremal correlations above 0.7. We see that the CIDRs overall exhibit matching  patterns of cumulative
intraday returns on extreme market volatility days during the covid-19 market turbulence, where most sectors either drop or increase together. However, our coefficient reveals more subtle
information as well. For example,  extreme return curves of
XLF (Financials) are exceptionally strongly correlated with extreme curves for
XLV, XLB and XLK (Health Care,  Materials, Technology), but moderately correlated with XLU (Utilities). We do not aim at an analysis of the stock market or the economy, but we observe that some findings are interesting. One might expect the financial sector (mostly banks) to be strongly affected by the technology sector (mostly large IT companies like Google or Microsoft) because such mega corporations dominate the U.S. stock market. The similarity of extreme return curves for XLF and XLK is shown in the leftmost panels of Fig.~\ref{f:pairs}. In contrast, bank stocks would be expected to be less affected by utility companies, whose revenues are largely fixed. However, the strong correlation with the Health Care and Materials sectors is less clear. As another comparison, XLY (Consumer Discretionary) and XLU (Utilities) show a moderate extremal correlation of 0.43. Their extreme curves exhibit relatively dissimilar patterns, as seen in the middle panels in Fig.~\ref{f:pairs}.

\begin{figure}[t]
\begin{centering}
\includegraphics[width=\textwidth]{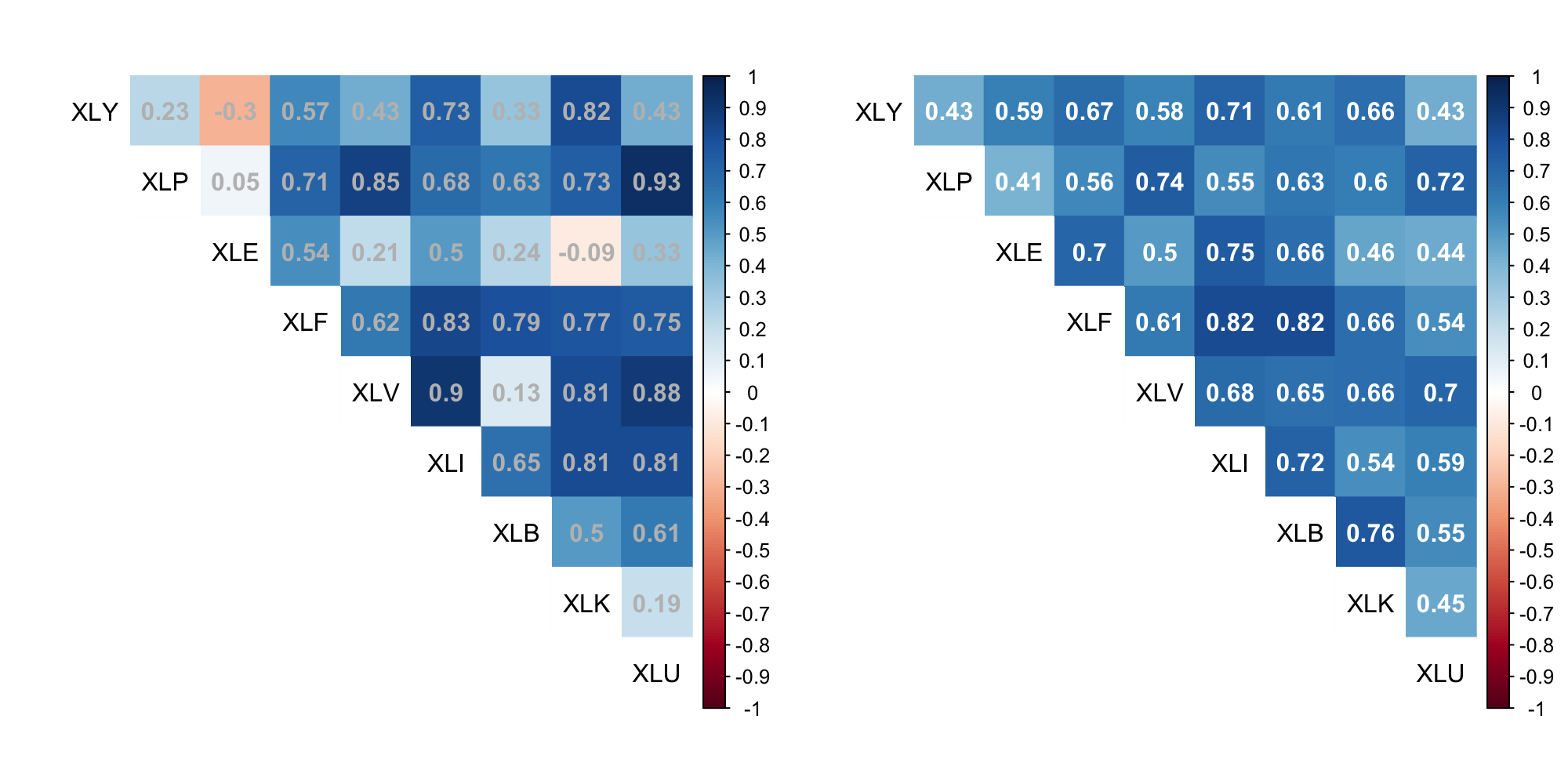}
\par\end{centering}
\caption{Estimates of the pairwise coefficients of CIDRs, calculated from closing returns (left) and from all curves including non-extreme parts (right), are displayed.
\label{f:ecc_cidr_c}}
\end{figure}

We conclude by emphasizing that our tool offers a more precise quantification of intraday risk during extreme events. First, by analyzing curve shapes, it provides a better assessment of intraday risks. The left plot of Figure~\ref{f:ecc_cidr_c} displays the pairwise coefficients from single-valued closing returns. This plot reveals somewhat different information from Fig.~\ref{f:ecc_cidr}. For instance, between XLY and XLE, the closing returns show a $-0.3$ correlation, indicating negative correlation during extreme days. However, this value does not accurately reflect the positive relationship observed in the rightmost plots in Fig.~\ref{f:pairs}, where the paired extreme curves appear somewhat similar. Second, by focusing on the extreme parts of paired samples of curves, our tool effectively quantifies risk during these critical events. The right plot of Fig.~\ref{f:ecc_cidr_c} shows  correlation coefficients computed from all curves, including non-extreme ones. These coefficients tend to underestimate the extreme risk, highlighting the necessity for tools that specifically describe extreme conditions.

\subsection{Extremal correlation between daily temperature curves} \label{ss:hw}

\begin{figure}[t]
\begin{centering}
\includegraphics[width=.6\textwidth]{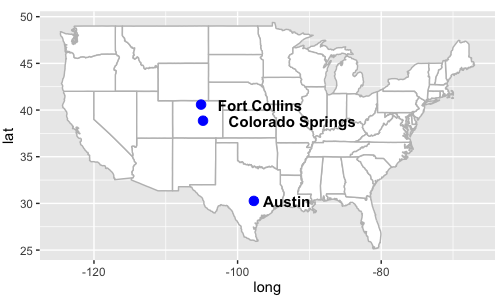}
\par\end{centering}
\caption{The three locations in the United States: Fort Collins, CO; Colorado Springs, CO; Austin, TX. The pairwise extremal correlation of daily temperature curves between the three locations is evaluated.
\label{f:map}}
\end{figure}

In this section, we evaluate the tendency of paired daily temperature curves to exhibit similar extreme patterns across three locations in the United States. The three locations are marked  in Figure~\ref{f:map}. We focus on the pairwise extremal dependence of those curves during the 2021 heat wave. Although this example  focuses on temperature curves, our tool can be used
for analyzing other curves during extreme weather events;   for example,
daily precipitation patterns or river flows during floods. A correlation of extreme data   during   past events may help with planning a resilient
infrastructure that can better withstand the next extreme weather event.

We use hourly temperature measurements provided by the European Centre for Medium-Range Weather Forecasts (ECMWF). The data are part of their ERA5 (Fifth Generation of ECMWF atmospheric reanalyses) dataset, and represent the temperatures of air at 2 meters above the surface of land, sea or inland waters. We refer to~\citet{era5:2020} for more details on the ERA5 data.  We partition the hourly data into daily curves, with each day's curve starting at midnight local time, to produce comparable daily temperature curves across locations in different time zones. We denote the temperature (in Celsius) on day $i$ at hour $t$ by $X_i(t)$, $i=1, \ldots, N$.
Figure~\ref{f:3loc} depicts examples of daily temperature curves at the three locations. The data are taken from May 12, 2021 to Aug 31, 2021 ($N$ = 112).

\begin{figure}[t]
\begin{centering}
\includegraphics[width=1\textwidth]{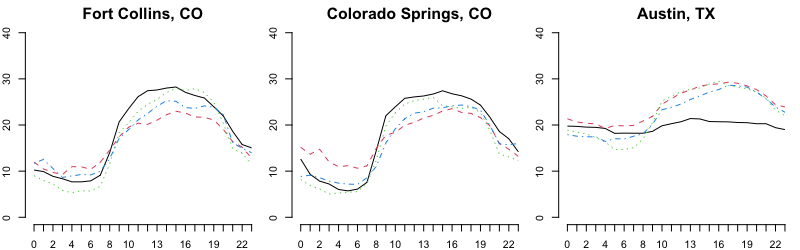}
\par\end{centering}
\caption{Extreme daily temperature curves (in Celsius) during the 2021 heat wave (local time on the x-axis).  Curves of matching color represent the same days when both Fort Collins and Colorado Springs   experienced extreme patterns simultaneously.
\label{f:3loc}}
\end{figure}

We follow the step-by-step guide outlined in Section~\ref{s:pw} to compute $\hat{\rho}_{n,k}$ for each pair of the three locations. First, for each location, daily curves are centered by the mean function, $\bar{X}_N (t) = \frac{1}{N}\sum_{i=1}^N X_i(t)$, and its norm $\| X_i(t) -\bar{X}_N (t)\|$ is computed using~\eqref{eq:inner} with $J = 24$. We then examine whether the Hill plots of the norms for each location show stable regions. As seen in Figure~\ref{f:Hill_t} of Supplementary material, the norms are regularly varying, so we compute the tail estimates $\hat{\alpha}$ using the Hill estimator, as shown in Table~\ref{t:etf}. Since the marginals across the three locations are not tail-equivalent, we apply the power transformation~\eqref{eq:tr} to achieve tail equivalence with $\alpha = 3$. We then apply the method from~\citet{danielsson:Ergun:deHaan:deVries:2016} to determine the optimal $k$ for estimating $\rho_{XY}$ for each pair.

\begin{table}[t]
\centering
\caption{Tail index estimates $\hat{\ag}$ and pairwise extremal correlation coefficients $\hat{\rho}_{n,k}$ of daily temperature curves across Fort Collins, CO, Colorado Springs, CO, and Austin, TX.
\label{t:3loc}}
\medskip
\begin{tabular}{l|c|cccc}
  \hline\hline
 &  && & $\hat{\rho}_{n,k}$ & \\ \cline{4-6}
Location & $\hat{\ag}$ && Fort & Colorado  & Austin\\
 &  && Collins & Springs & \\
\hline
Fort Collins & 4.4 && 1& 0.98 & 0.83\\
Colorado Springs & 3.8 &&0.98 & 1& 0.86\\
Austin& 3.4 &&0.83 & 0.86 &1\\
\hline\hline
\end{tabular}
\end{table}

Table~\ref{t:3loc} reports estimates of the pairwise  extremal correlation coefficient across the three locations. There are positive and strong extremal correlations among all pairs ($\hat{\rho}_{n,k} = 0.83 \sim 0.98$), suggesting a high degree of association between the daily temperature extreme patterns across the three locations, even between different
climatic regions like the Front Range  foothills and the southern edge of the Great Plains.
We see that the proximity in geographical locations corresponds to greater similarity in extreme patterns, showing that  $\hat{\rho}_{n,k}$ is
a meaningful and useful dependence measure.

\section{Discussion}

Despite  promising results, there are some limitation to our approach that suggest potential directions for future work. We treat functional
observations as regularly varying square integrable
random functions in  $L^2$, which requires  $\ag>2$. While this condition is met for  the financial and environmental data we work with, it might be  desirable to find a correlation-like extremal dependence measure that requires merely $\ag>0$.
Possible extensions could involve the codifference or  the covariation introduced in \citet{kokoszka:taqqu:1995}. These measures of dependence are applicable to stable vectors with the index $\ag< 2$, and have been
studied in econometrics and statistical
physics contexts, see e.g., \citet{kokoszka:taqqu:1996}, \citet{levy:taqqu:2014}, and \citet{wolymanska:2015}. Exploring extreme value theory for functional data in this context might
be useful, but one must keep in mind that those measures are not symmetric.

\section*{Acknowledgement}
We Thank Professor Hong Miao of the Department of Finance and Real
Estate at Colorado State University for preprocessing the financial data used in Section \ref{ss:etf}. We thank Professor Joshua French of
the Department of Mathematical and Statistical Sciences at the University of Colorado Denver
for preprocessing the temperature data used in Section \ref{ss:hw}. We benefited from specific and detailed advice given by the reviewers. Kim and Kokoszka were partially supported by the United States National Science Foundation.

\section*{Supplementary material}
\label{SM}
The Supplementary material contains proofs of the theoretical results stated in the main
paper and additional empirical results.

\bibliographystyle{biometrika}
\bibliography{ecc}

\newpage

 \setcounter{page}{1}

\centerline{ \Large Supplementary material for ``Extremal correlation coefficient for functional data"}

\appendix

\appendixone
\section{Preliminary results}\label{s:pre}

In this section, we put together preliminary results needed to prove Theorem~\ref{t:cs}. Some of these results are known in the literature, and none of them are particularly profound or difficult to prove. However, these results allow us to streamline the exposition of proofs of the main result. Recall from~\eqref{eq:A} that $\mathcal A_r =\{(x,y)\in \mbB_0^2: \|(x,y)\|_{\mbB^2}  \ge r\}$, $r>0$, where $\|(x,y)\|_{\mbB^2} = \|x\|_{\mbB} \vee \|y\|_{\mbB}$.

\begin{lemma} \label{l:mu-conti}
Suppose $\mu$ is a measure in $M_0(\mbB^2)$ satisfying $\mu(t\cdot)=t^{-\ag}\mu(\cdot)$, $t>0$.  Then, $\mathcal A_r$ is a $\mu$--continuity set, i.e., $\mu(\partial \mathcal A_r)=0$.
\end{lemma}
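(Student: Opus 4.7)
The plan is to use the scaling property of $\mu$ combined with the fact that $\mu$ is boundedly finite on $\mathbb{B}_0^2$. The argument is clean once set up, and the main thing to verify is that the boundary $\partial \mathcal{A}_r$ equals the ``sphere'' $\{(x,y): \|(x,y)\|_{\mathbb{B}^2} = r\}$ and that distinct such spheres are pairwise disjoint.

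First, because $(x,y) \mapsto \|(x,y)\|_{\mathbb{B}^2} = \|x\|_{\mathbb{B}} \vee \|y\|_{\mathbb{B}}$ is continuous, the set $\mathcal{A}_r$ is closed with interior $\{(x,y): \|(x,y)\|_{\mathbb{B}^2} > r\}$, so $\partial \mathcal{A}_r = S_r := \{(x,y): \|(x,y)\|_{\mathbb{B}^2} = r\}$. Observe that $t \cdot S_r = S_{tr}$ for every $t>0$, so the scaling hypothesis gives
\[
\mu(S_{tr}) = \mu(t \cdot S_r) = t^{-\alpha}\mu(S_r), \qquad t > 0.
\]
Thus if $\mu(S_r) = 0$ for one $r>0$, it is zero for all, and conversely if it is positive for one $r$, it is positive for every $r$ with a value of the same order.

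Next, I would argue by contradiction: suppose $\mu(\partial \mathcal{A}_r) = \mu(S_r) =: c > 0$. Pick any countable sequence $\{s_n\}_{n \ge 1}$ of distinct radii in, say, the compact interval $[r, 2r]$. The sets $S_{s_n}$ are pairwise disjoint because $(x,y)$ has only one value of its norm, and each is contained in the annulus
\[
C := \{(x,y)\in \mathbb{B}_0^2 : r \le \|(x,y)\|_{\mathbb{B}^2} \le 2r\},
\]
which is bounded away from $\mathbf{0}$. Since $\mu \in M_0(\mathbb{B}^2)$, we have $\mu(C) < \infty$. On the other hand, the scaling identity gives
\[
\mu(S_{s_n}) = (s_n/r)^{-\alpha} c \ge (2r/r)^{-\alpha} c = 2^{-\alpha} c > 0
\]
for every $n$, so by countable additivity $\mu(C) \ge \sum_{n \ge 1} \mu(S_{s_n}) = \infty$, a contradiction. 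Hence $\mu(\partial \mathcal{A}_r) = 0$.

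The whole argument is short and routine; the only point that requires care is ensuring the chosen annulus $C$ really is bounded away from $\mathbf{0}$ (so that we may invoke bounded finiteness of $\mu$) and that the spheres $S_{s_n}$ are genuinely disjoint, both of which follow directly from the definition of the norm $\|\cdot\|_{\mathbb{B}^2}$. There is no serious obstacle.
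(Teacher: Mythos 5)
Your proof is correct and is essentially the paper's argument: both derive a contradiction by packing countably many pairwise disjoint scaled copies of $\partial\mathcal{A}_r$, each of positive $\mu$-measure by homogeneity, into a set that is bounded away from $\mathbf{0}$ and hence of finite $\mu$-measure. The only cosmetic difference is that you choose radii in the compact annulus $[r,2r]$, giving a uniform lower bound $2^{-\alpha}c$ per sphere, while the paper uses the radii $n^{1/\alpha}r$ inside $\mathcal{A}_r$ itself and the divergence of $\sum_{n\ge 1} n^{-1}$.
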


\begin{proof}
We assume $\mu(\partial{\mathcal A_r})>0$ and get a contradiction. Since ${\mathcal A_r}\supset \bigcup_{n\ge 1} \partial(n^{1/\ag}{\mathcal A_r})$, it follows from the homogeneity property of $\mu$ that
\[
\mu({\mathcal A_r}) \ge \sum_{n=1}^{\infty} \mu(\partial(n^{1/\ag}{\mathcal A_r})) = \sum_{n=1}^{\infty} \mu(n^{1/\ag}\partial{\mathcal A_r}) =\sum_{n=1}^{\infty} n^{-1}\mu(\partial{\mathcal A_r})=\infty.
\]
It contradicts to the fact that $\mu$ is boundedly finite.
\end{proof}

Recall that $R= \|(X, Y)\|$, $R_i = \|(X_i, Y_i)\|$, and $R_{(k)}$ is the $k$th largest order statistic with the convention $R_{(1)} = \max \{ R_1, \ldots, R_n \}$. Let $b(n)$ be the quantile function such that ${\rm pr}(R>b(n))=n^{-1}$. Then, the following lemma holds by Proposition 3.1 of~\citet{segers:zhao:meinguet:2017}, and Theorem 4.1 and the proof of Theorem 4.2 of~\citet{resnick:2007}.

\begin{lemma} \label{l:R}

Let $M_{+}(0,\infty]$ be the space of Radon measures on $(0,\infty]$, and $\nu_{\ag}(r, \infty] = r^{-\ag}$. Also, let $\ep_x(A) = 1$ if $x \in A$ and $\ep_x(A) = 0$ if $x \notin A$. If $[X,Y]^{\top}$ is regularly varying in $L^2\times L^2$ according to Definition~\ref{d:RVinL2L2}, then

\noindent(i) $R$ is a nonnegative random variable whose distribution has a regularly varying tail with index $-\ag$,

\noindent(ii) $\frac{1}{k} \sum_{i=1}^n \ep_{R_i / b(n/k)} \convP \nu_{\ag}$, in $M_{+}(0,\infty]$,

\noindent(iii) $R_{(k)}/b(n/k) \convP 1$, in $[0,\infty)$,

\noindent(iv) $\frac{1}{k} \sum_{i=1}^n \ep_{R_i / R_{(k)}} \convP \nu_{\ag}$ in $M_{+}(0,\infty]$.

\end{lemma}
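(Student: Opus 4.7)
The plan is to transfer the $M_0$-convergence of the bivariate regularly varying $(X,Y)$ to the scalar $R=\|X\|\vee\|Y\|$ and then invoke standard empirical-measure arguments for one-dimensional regularly varying data; the four claims will build on one another in order. For part (i), I would apply Definition~\ref{d:RVinL2L2} to the sets $\cA_r$ from~\eqref{eq:A}. By Lemma~\ref{l:mu-conti} each $\cA_r$ is a $\mu$-continuity set, so the $M_0$-convergence together with the scaling~\eqref{eq:mu-s} and normalization~\eqref{eq:mu1} yields
\begin{equation*}
n\,{\rm pr}(R/b(n)>r)=n\,{\rm pr}((X,Y)/b(n)\in\cA_r)\longrightarrow\mu(\cA_r)=r^{-\ag},
\end{equation*}
which is precisely regular variation of $R$ with index $-\ag$ and identifies $b(n)$ with the quantile satisfying ${\rm pr}(R>b(n))=n^{-1}$.

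For part (ii), I would prove convergence of the empirical Radon measure on $(0,\infty]$ via a first- and second-moment computation. It suffices to show that, for every continuous nonnegative $f$ with support bounded away from $0$, $k^{-1}\sum_{i=1}^{n}f(R_i/b(n/k))\convP\int f\,d\nu_{\ag}$. The mean $(n/k)E[f(R/b(n/k))]$ converges by part (i) combined with $(n/k){\rm pr}(R>b(n/k))\to 1$; the variance is bounded by $(n/k^{2})E[f^{2}(R/b(n/k))]$, and since $f^{2}$ inherits the support condition from $f$, part (i) makes $(n/k)E[f^{2}(R/b(n/k))]$ bounded, so the variance is of order $1/k\to 0$.

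Parts (iii) and (iv) then follow by essentially soft arguments. For (iii), applying (ii) to continuous approximations of the indicators of $(1\pm\dg,\infty]$ gives $k^{-1}\#\{i:R_i>(1-\dg)b(n/k)\}\convP(1-\dg)^{-\ag}>1$ and $k^{-1}\#\{i:R_i>(1+\dg)b(n/k)\}\convP(1+\dg)^{-\ag}<1$; combining these with the order-statistic identity $\{R_{(k)}>t\}=\{\#\{i:R_i>t\}\ge k\}$ sandwiches $R_{(k)}/b(n/k)$ between $1-\dg$ and $1+\dg$ with probability tending to $1$. For (iv), set $c_n=b(n/k)/R_{(k)}\convP 1$ and use uniform continuity of any test function $f$ with compact support in $(0,\infty]$ to sandwich $f(c_n R_i/b(n/k))$ between $f((1\mp\eta)R_i/b(n/k))$ on the event $\{|c_n-1|<\eta\}$; part (ii) controls the bounding sums, and letting $\eta\to 0$ transfers the limit.

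The main obstacle is the variance control in part (ii), since everything else reduces to $M_0$-convergence bookkeeping or uniform continuity. The delicate point is to confirm that $(n/k)E[f^{2}(R/b(n/k))]=O(1)$ holds for the relevant test-function class; this ultimately rests on the regular variation of $R$ obtained in part (i), which guarantees $\int f^{2}\,d\nu_{\ag}<\infty$ whenever $f$ has support bounded away from $0$.
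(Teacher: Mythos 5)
Your proof is correct and follows essentially the same route as the paper, which does not spell out an argument but derives the lemma directly from Proposition 3.1 of Segers, Zhao and Meinguet (2017) for part (i) and from Theorem 4.1 and the proof of Theorem 4.2 of Resnick (2007) for parts (ii)--(iv); your mean--variance computation for (ii), the order-statistic inversion for (iii), and the scaling/continuity transfer for (iv) are exactly the standard arguments behind those citations. The only cosmetic point is in (iv): the literal sandwich $f((1-\eta)R_i/b(n/k))\le f(c_nR_i/b(n/k))\le f((1+\eta)R_i/b(n/k))$ requires monotonicity, so one should either phrase the bound via the modulus of continuity of $f$ on the compactified half-line or restrict to tail intervals $(r,\infty]$, for which the monotone sandwich $I\{s>r/(1-\eta)\}\le I\{c_ns>r\}\le I\{s>r/(1+\eta)\}$ is exact.
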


The following lemma is used to prove Lemmas~\ref{l:alM} and~\ref{l:cont-2}.

\begin{lemma} \label{l:cont}
Suppose $\ga_n$ converges vaguely to $\nu_\ag$ in $M_+(0,\infty]$. Then for any compact interval $K\subset (0,\infty]$,
\[
\int_K r^2\ga_n(dr) \to \int_K r^2 \nu_\ag(dr).
\]
\end{lemma}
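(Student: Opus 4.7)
The plan is to approximate the bounded but discontinuous integrand $r \mapsto r^2 \mathbf{1}_K(r)$ by continuous compactly supported functions on $(0,\infty]$ that sandwich it from above and below, then invoke the definition of vague convergence. Writing $K = [a,b]$ with $0 < a \le b \le \infty$, I would first observe that $\nu_\ag$ has the absolutely continuous density $\ag r^{-\ag-1}$ on $(0,\infty)$ and satisfies $\nu_\ag(\{\infty\}) = \lim_{r\to\infty} r^{-\ag} = 0$, so $\nu_\ag(\partial K) = 0$; in particular, $K$ is a $\nu_\ag$-continuity set, and $\int_K r^2 \, d\nu_\ag$ is finite in the range of $\ag$ relevant to the paper.

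Next, for each $\delta > 0$ (and, when $b = \infty$, each large $M$), I would construct functions $f^-_\delta, f^+_\delta \in C_c(0,\infty]$ with supports bounded away from $0$ such that $f^-_\delta(r) \le r^2 \mathbf{1}_K(r) \le f^+_\delta(r)$ for every $r$ and $\int (f^+_\delta - f^-_\delta) \, d\nu_\ag \to 0$ as $\delta \to 0$. A concrete choice is piecewise-linear tapering: for $f^+_\delta$, take $r^2$ on $[a-\delta, (b+\delta)\wedge M]$ and taper linearly down to zero on an outer collar, and define $f^-_\delta$ analogously on the shrunken interval $[a+\delta, b-\delta]$. The bound on the gap integral follows from absolute continuity of $\nu_\ag$ near $a$ and $b$, together with $\int_M^\infty r^2 \, d\nu_\ag \to 0$ as $M \to \infty$ when $b = \infty$.

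The definition of vague convergence applied separately to $f^-_\delta$ and $f^+_\delta$ then yields
\[
\int f^-_\delta \, d\nu_\ag = \lim_n \int f^-_\delta \, d\ga_n \le \liminf_n \int_K r^2 \, d\ga_n \le \limsup_n \int_K r^2 \, d\ga_n \le \lim_n \int f^+_\delta \, d\ga_n = \int f^+_\delta \, d\nu_\ag,
\]
and letting $\delta \to 0$ squeezes both extremes to $\int_K r^2 \, d\nu_\ag$, establishing the claim.

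The main obstacle is handling the boundary of $K$ and, if $K$ is unbounded at $\infty$, controlling the tail at infinity: the former is resolved by the atomlessness of $\nu_\ag$ on $(0,\infty)$, while the latter uses integrability of $r^2$ against $\nu_\ag$, which is implicit in the paper's condition $\ag > 2$. All other steps are routine consequences of the definition of vague convergence in $M_{+}(0,\infty]$.
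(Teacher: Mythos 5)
Your strategy --- sandwich the discontinuous integrand between continuous test functions, invoke vague convergence on each, and use the atomlessness of $\nu_\ag$ to shrink the gap --- is in spirit the same as the paper's proof, which approximates the indicator by continuous $f_j$ with $I_K \le f_j \le I_{K_j}$ for $K_j \searrow K$, splits the error into three terms by the triangle inequality, and controls the boundary collar via $\limsup_n \ga_n(K_j\setminus K^{\circ}) \le \nu_\ag(K_j\setminus K^{\circ})$ (Proposition 3.12 of Resnick, 1987). For a bounded interval $K=[a,b]$ with $b<\infty$ your argument is complete and correct.

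The gap is the case $b=\infty$, which is exactly the case the paper uses: Lemma~\ref{l:alM} applies Lemma~\ref{l:cont} with $K=[M,\infty]$. You claim the existence of $f^{+}_{\delta}\in C_c(0,\infty]$ with $r^2 I_K(r)\le f^{+}_{\delta}(r)$ for every $r$, but no such function exists --- a continuous compactly supported function on $(0,\infty]$ must have a finite limit at $\infty$ and hence is bounded, whereas $r^2 I_{[a,\infty]}$ is not. Your concrete construction, which caps $f^{+}_{\delta}$ at level $M$ and tapers it to zero, therefore fails to dominate $r^2 I_K$ beyond $M$, and the inequality $\limsup_n \int_K r^2\, \ga_n(dr) \le \lim_n \int f^{+}_{\delta}\, d\ga_n$ collapses. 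The repair you gesture at, $\int_M^{\infty} r^2\, \nu_\ag(dr)\to 0$, controls only the limit measure and says nothing about $\limsup_n \int_M^{\infty} r^2\, \ga_n(dr)$; vague convergence alone cannot supply that control (e.g., $\ga_n=\nu_\ag+n^{-1}\ep_n$ converges vaguely to $\nu_\ag$ in $M_+(0,\infty]$, yet $\int_{[M,\infty]} r^2\, \ga_n(dr)\ge n$). Closing the tail requires a uniform-integrability input specific to the measures at hand --- for the paper's $\ga_n$ this comes from the regular variation of $R$ and the Markov/truncation step in Lemmas~\ref{l:alM} and~\ref{l:22} --- and this is the one genuinely non-routine ingredient that your write-up treats as routine. (To be fair, the paper's own proof also leans on the unjustified step $\int r^2 f_j\, d\ga_n \to \int r^2 f_j\, d\nu_\ag$ for $f_j$ supported up to $\infty$, so the difficulty is latent there as well; but your explicit two-sided sandwich makes the impossibility at $\infty$ unavoidable rather than merely unaddressed.)
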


\begin{proof}
Since the function $r\mapsto r^2 I_{K}$ is not continuous,
we use an approximation argument. Set $K= [a, b]$, for $0<a<b \le \infty$. Construct
compact intervals $K_j \searrow K$ and nonnegative continuous functions
$f_j$ such that $I_K \le f_j \le I_{K_j}$. By the triangle inequality,
\begin{align*}
\left | \int_K r^2 \ga_n(dr) - \int_K r^2 \nu_\ag(dr)\right |
&\le  \left | \int r^2 I_K(r) \ga_n(dr) - \int r^2 f_j(r) \ga_n (dr)\right |\\
& \ \ +
\left | \int r^2 f_j(r) \ga_n (dr)  - \int r^2 f_j(r) \nu_\ag  (dr)\right |\\
& \ \ +
\left | \int r^2 f_j(r) \nu_\ag  (dr)  - \int r^2 I_K(r) \nu_\ag(dr)\right |\\
&=: A_{n, j}^{(1)} + A_{n, j}^{(2)} + A_{j}^{(3)}.
\end{align*}
Fix $\tau > 0$. There is $j^\star$ such that for $j \ge j^\star$,
\[
A_{j}^{(3)} \le c\int \lb f_j(r) - I_K(r) \rb \nu_\ag(dr)
\le c \nu_\ag( K_j \setminus K^\circ) < \tau/2,
\]
where $c=b^2I_{b\neq \infty}+a^2I_{b=\infty}$.
Similarly $ A_{n, j}^{(1)} \le c \ga_n ( K_j \setminus K^\circ)$,
so for every fixed $j$,
\[
\limsup_{n\to \infty} A_{n, j}^{(1)} \le c
\limsup_{n\to \infty} \ga_n( K_j \setminus K^\circ)
\le c \nu_\ag( K_j \setminus K^\circ)
\]
because $K_j \setminus K^\circ$ is compact, cf. Proposition 3.12 in
\citet{resnick:1987}. Thus,
\[
\limsup_{n\to\infty}
\left | \int_K r^2 \ga_n(dr) - \int_K r^2 \nu_\ag(dr)\right |
\le  \tau + \limsup_{n\to\infty} A_{n, j^\star}^{(2)} = \tau.
\]
Since $\tau$ is arbitrary, we get the claim.
\end{proof}

The following lemma is used to prove Proposition~\ref{p:sglim}.

\begin{lemma} \label{l:cont_m0}
Under assumptions in Proposition~\ref{p:sglim}, we have that
\[
\int_{\cA_1} \lip x,y\rip n{\rm pr}({b(n)}^{-1}X \in dx, {b(n)}^{-1}Y \in dy) 
\to \int_{\cA_1} \lip x,y\rip \mu(dx, dy).
\]
\end{lemma}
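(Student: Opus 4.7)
The plan is to extend the $M_0$ convergence $\nu_n := n\,{\rm pr}(b(n)^{-1}(X,Y)\in\cdot)\stackrel{M_0}{\longrightarrow}\mu$ (Definition~\ref{d:RVinL2L2}) to the integral of the unbounded integrand $h(x,y):=\lip x,y\rip I_{\cA_1}(x,y)$. Direct application of the definition of $M_0$ convergence is not possible because $|h|$ is not bounded on $\cA_1$: it grows like $\|(x,y)\|^2$. I would remove this obstruction by truncating at level $M>1$: write $h=f_M+g_M$, where $f_M(x,y)=\lip x,y\rip I_{1\le\|(x,y)\|\le M}$ and $g_M(x,y)=\lip x,y\rip I_{\|(x,y)\|>M}$, and treat the two pieces separately.

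For the truncated piece, $f_M$ is bounded (by $M^2$), vanishes on a neighborhood of $(\bzero,\bzero)$, and has discontinuity set contained in $\{\|(x,y)\|=1\}\cup\{\|(x,y)\|=M\}$, both of $\mu$-measure zero by Lemma~\ref{l:mu-conti}. A standard Portmanteau-type extension of $M_0$ convergence to bounded measurable functions supported away from the origin with $\mu$-null discontinuity sets (proved, e.g., by two-sided sandwiching with continuous test functions in $\cC_0(\mbB^2)$; cf.\ the equivalent formulation recalled in Section~\ref{s:RV} and Theorem~2.4 of~\citet{hult:lindskog:2006}) then yields $\int f_M\,d\nu_n\to\int f_M\,d\mu$ for each fixed $M>1$.

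For the tail piece, use $|\lip x,y\rip|\le\|x\|\|y\|\le\|(x,y)\|^2$ and set $R=\|(X,Y)\|$ to obtain
\[
\left|\int g_M\,d\nu_n\right|\le n\,b(n)^{-2}\,E\lb R^2 I_{R>Mb(n)}\rb.
\]
By Lemma~\ref{l:R}(i) and Assumption~\ref{a:L2L2}, $R$ is regularly varying with index $-\ag$ and $\ag>2$, so Karamata's theorem gives $E[R^2 I_{R>t}]\sim\{\ag/(\ag-2)\}\,t^2\,{\rm pr}(R>t)$ as $t\to\infty$. Combined with $n\,{\rm pr}(R>b(n))\to 1$ (from the normalization in~\eqref{eq:mu1}) and the regular-variation ratio ${\rm pr}(R>Mb(n))/{\rm pr}(R>b(n))\to M^{-\ag}$, this yields $\limsup_n\big|\int g_M\,d\nu_n\big|\le\{\ag/(\ag-2)\}M^{2-\ag}$. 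The analogous bound for $\mu$, computed via the polar decomposition $\mu(dx,dy)=\ag r^{-\ag-1}dr\,\Gg(d\theta_X,d\theta_Y)$, is $\big|\int g_M\,d\mu\big|\le\{\ag/(\ag-2)\}M^{2-\ag}$. Both bounds vanish as $M\to\infty$ since $\ag>2$.

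Combining the three estimates via the triangle inequality and letting $n\to\infty$ first and then $M\to\infty$ gives the claim. The main obstacle is the uniform-in-$n$ tail bound in the preceding paragraph, which is exactly where the moment condition $\ag>2$ from Assumption~\ref{a:L2L2} is indispensable; Karamata's theorem applied to the regularly varying norm $R$ is the natural tool. The remaining ingredients — the truncation itself, the $\mu$-nullity of the spherical boundaries supplied by Lemma~\ref{l:mu-conti}, and the Portmanteau-type strengthening of $M_0$ convergence — are routine given the preliminary results already in place.
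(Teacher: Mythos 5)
Your proposal is correct and follows essentially the same route as the paper: truncate the unbounded integrand at a finite radius, handle the bounded middle piece via the Portmanteau-type extension of $M_0$ convergence (Lemma~\ref{l:mu-conti} plus Lemma A.1 of \citet{meinguet:segers:2010}), and kill both tail pieces using $\ag>2$, arriving at the identical bound $\{\ag/(\ag-2)\}M^{2-\ag}$. The only difference is cosmetic: where you invoke Karamata's theorem for $E[R^2 I_{R>t}]$, the paper instead routes the empirical tail bound through its Lemma~\ref{l:cont} (vague convergence of $\int_K r^2\ga_n(dr)$ on $K=[j,\infty]$) together with Lemma~\ref{l:R}(i) and Lemma~\ref{l:alM}, so both arguments rest on the same regular-variation facts.
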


\begin{proof}
Since the function $(x,y)\mapsto \lip x,y\rip I_{\cA_1}$ 
is not bounded, we employ an approximation argument, 
similar to that in Lemma~\ref{l:cont}. Let $\ga_n = n{\rm pr}({b(n)}^{-1}X \in \cdot, {b(n)}^{-1}Y \in \cdot)$, and consider a sequence $\{\cA_j\}_{j=1}^{\infty}$. By the triangle inequality, we have that
\begin{align*}
\left | \int_{\cA_1} \lip x,y\rip \ga_n(dx,dy) - \int_{\cA_1} \lip x,y\rip \mu(dx,dy)\right |
&\le  \left | \int_{\cA_1} \lip x,y\rip  \ga_n(dx,dy) - \int_{\cA_1 \setminus \cA_j} \lip x,y\rip  \ga_n (dx,dy)\right |\\
& \ \ +
\left | \int_{\cA_1 \setminus \cA_j} \lip x,y\rip  \ga_n (dx,dy)  - \int_{\cA_1 \setminus \cA_j} \lip x,y\rip  \mu  (dx,dy)\right |\\
& \ \ +
\left | \int_{\cA_1 \setminus \cA_j} \lip x,y\rip  \mu  (dx,dy)  - \int_{\cA_1} \lip x,y\rip  \mu(dx,dy)\right |\\
&=: B_{n, j}^{(1)} + B_{n, j}^{(2)} + B_{j}^{(3)}.
\end{align*}
Fix $\tau > 0$. It follows by~\eqref{eq:cov_pol} that there is $j^\star$ such that for $j \ge j^\star$,
\[
B_{j}^{(3)} \le \int_{\cA_j} \lip x,y\rip \mu(dx,dy) = \int_{r \ge j} r^2 \ag r^{-\ag-1} dr
\int_{\mbS_{L^2 \times L^2}} \lip \thg_X, \thg_Y \rip \Gg (d\thg_X,d\thg_Y) \le \frac{\ag}{\ag-2}j^{-\ag+2} < \tau/2.
\]
Similarly, it follows by change of variable that
\begin{align*}
B_{n, j}^{(1)} &\le \int_{\cA_j} \lip x,y\rip \ga_n(dx,dy) \\
&=  \int_{r \ge j}  \int_{\mbS_{L^2 \times L^2}}  r^2
\lip \thg_X, \thg_Y \rip n{\rm pr} \lp \frac{\|(X,Y)\|}{b(n)} \in dr, \frac{(X,Y)}{\|(X,Y)\|} \in (d\thg_X, d\thg_Y)\rp \\
& \le \int_{r \ge j} r^2 \ n{\rm pr} \lp \frac{\|(X,Y)\|}{b(n)} \in dr \rp.
\end{align*}
Then, by Lemmas~\ref{l:R} (i) and~\ref{l:cont}, we have that for each 
 $j \ge j^\star$
\[
\limsup_{n\to\infty} B_{n, j}^{(1)} \le  \int_{r \ge j} r^2 \nu_{\ag}(dr) = \frac{\ag}{\ag-2}j^{-\ag+2} < \tau/2.
\]
Thus, it follows from Lemma~\ref{l:mu-conti} and Lemma A.1 of~\citet{meinguet:segers:2010} that
\[
\limsup_{n\to\infty}
\left | \int_{\cA_1} \lip x,y\rip \ga_n(dx,dy) - \int_{\cA_1} \lip x,y\rip \mu(dx,dy)\right |
\le  \tau + \limsup_{n\to\infty} B_{n, j^\star}^{(2)} = \tau.
\]
Since $\tau$ is arbitrary, we get the claim.
\end{proof}

The following two lemmas are used to prove Lemma~\ref{l:22} and Proposition~\ref{p:bar}.

\begin{lemma} \label{l:alM}
Under Assumption~\ref{a:L2L2}, for any $M>0$,
\[
\frac{n}{k} E \lb \lp \frac{R}{b(n/k)} \rp^2 I_{R \ge Mb(n/k) }  \rb  \to \frac{\ag}{\ag-2}M^{2-\ag}.
\]
\end{lemma}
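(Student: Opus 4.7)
The plan is to recognize the left-hand side as a truncated second moment against a deterministic pushforward measure on $(0,\infty]$, identify its vague limit, and then invoke Lemma~\ref{l:cont} to pass to the limit. Write $b_n := b(n/k)$ and define the Borel measure on $(0,\infty]$
\[
\ga_n(B) := \frac{n}{k} \operatorname{pr}\!\lp R/b_n \in B\rp,
\]
so that, by a change of variables,
\[
\frac{n}{k} E \lb \lp \frac{R}{b_n} \rp^2 I_{R \ge M b_n} \rb = \int_{[M,\infty]} r^2\, \ga_n(dr).
\]

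The first step will be to establish the vague convergence $\ga_n \to \nu_\ag$ in $M_+(0,\infty]$. By Lemma~\ref{l:R}(i), $R$ is regularly varying with index $-\ag$. Since $b_n = b(n/k)$ satisfies $\operatorname{pr}(R > b_n) = k/n$, one has, for every $r > 0$,
\[
\ga_n(r,\infty] \;=\; \frac{\operatorname{pr}(R > b_n r)}{\operatorname{pr}(R > b_n)} \;\longrightarrow\; r^{-\ag} \;=\; \nu_\ag(r,\infty],
\]
and because $\nu_\ag$ has no atoms, this pointwise convergence of tails delivers the required vague convergence on $(0,\infty]$.

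The second step will apply Lemma~\ref{l:cont} to the compact interval $K = [M,\infty] \subset (0,\infty]$ (the case $b = \infty$ being explicitly permitted in the statement of that lemma) to obtain
\[
\int_{[M,\infty]} r^2\, \ga_n(dr) \;\longrightarrow\; \int_{[M,\infty]} r^2\, \nu_\ag(dr).
\]
Finally, since $\nu_\ag$ restricted to $(0,\infty)$ has density $\ag r^{-\ag-1}$ and places no mass at $\infty$, a direct computation, valid because $\ag > 2$ by Assumption~\ref{a:L2L2}, gives
\[
\int_{M}^{\infty} r^2\, \ag r^{-\ag-1}\, dr \;=\; \frac{\ag}{\ag-2}\, M^{2-\ag},
\]
which is the claimed limit. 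The only substantive work is the vague convergence, which reduces to one-dimensional regular variation of $R$ together with the calibration $\operatorname{pr}(R > b_n) = k/n$; the integrand $r^2 I_{[M,\infty]}$ is unbounded near $\infty$, but Lemma~\ref{l:cont} was set up precisely to accommodate this, so no additional truncation argument is needed.
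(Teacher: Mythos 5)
Your proposal is correct and follows essentially the same route as the paper's proof: rewrite the truncated second moment as $\int_{[M,\infty]} r^2\,\ga_n(dr)$, obtain the vague convergence $\ga_n \to \nu_\ag$ in $M_+(0,\infty]$ from Lemma~\ref{l:R}(i), and conclude via Lemma~\ref{l:cont} with $K=[M,\infty]$. The only difference is cosmetic: you spell out the tail-ratio argument for the vague convergence, which the paper leaves implicit.
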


\begin{proof}

Observe that
\[
\frac{n}{k} E \lb \lp \frac{R}{b(n/k)} \rp^2 I_{R \ge Mb(n/k) }  \rb
=\int_M^{\infty} r^2  \frac{n}{k}{\rm pr}\lp \frac{R}{b(n/k)} \in dr\rp,
\]
and
\[
\frac{\ag}{\ag-2}M^{2-\ag} = \int_{M}^{\infty} r^2 \nu_{\ag}(dr).
\]
By Lemma~\ref{l:R} (i), we have that in $M_+(0,\infty]$
\[
\frac{n}{k}{\rm pr}\lp \frac{R}{b(n/k)} \in \cdot \rp \convv \nu_{\ag}.
\]
Therefore, we get the claim by Lemma~\ref{l:cont} with $K= [M, \infty]$.
\end{proof}

\begin{lemma} \label{l:cont-2}
The function $h$ on $M_+(0,\infty]$ defined by $h(\ga) = \int_1^M r^2 \ga(dr)$ is continuous at $\nu_\ag$.
\end{lemma}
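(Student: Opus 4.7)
The statement is essentially a direct reformulation of Lemma~\ref{l:cont}. The plan is simply to apply Lemma~\ref{l:cont} with the compact interval $K = [1, M] \subset (0, \infty]$. Specifically, to establish continuity of $h$ at $\nu_\ag$, I would take an arbitrary sequence $\ga_n$ in $M_+(0, \infty]$ with $\ga_n \convv \nu_\ag$, and then invoke Lemma~\ref{l:cont} to conclude that
\[
h(\ga_n) = \int_1^M r^2 \ga_n(dr) \to \int_1^M r^2 \nu_\ag(dr) = h(\nu_\ag).
\]
By definition of sequential continuity, this gives the claim.

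The only subtlety worth flagging is that the function $r \mapsto r^2 I_{[1,M]}(r)$ is discontinuous at the endpoints, so the conclusion does not follow directly from the definition of vague convergence (which applies to continuous functions with compact support). However, this is precisely what the approximation argument in Lemma~\ref{l:cont} was built to handle: it sandwiches $I_K$ between continuous functions $f_j$ with $I_K \le f_j \le I_{K_j}$ where $K_j \searrow K$, and uses the absolute continuity of $\nu_\ag$ (so that $\nu_\ag(\partial K) = 0$) to squeeze the error to zero. Since Lemma~\ref{l:cont} already carries out this argument, no new ingredient is needed here.

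Consequently, I do not expect any real obstacle. The purpose of isolating this statement as a separate lemma appears to be notational: phrasing the result as a continuity property of the functional $h$ at $\nu_\ag$ makes it directly composable with the convergence $\frac{1}{k} \sum_{i=1}^n \ep_{R_i / R_{(k)}} \convP \nu_\ag$ from Lemma~\ref{l:R}(iv), which is what is needed downstream in the consistency proof of $\hat{\sg}_{n,k}$.
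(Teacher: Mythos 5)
Your proposal is correct and matches the paper's own proof, which likewise just applies Lemma~\ref{l:cont} with $K=[1,M]$ to an arbitrary sequence $\ga_n \convv \nu_\ag$. Your remarks on why the endpoint discontinuities are harmless and why the lemma is stated separately are accurate but not needed beyond the one-line reduction.
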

\begin{proof}
Suppose $\ga_n$ converges vaguely to $\nu_\ag$. Then, by Lemma~\ref{l:cont} with $K= [1, M]$, it can be shown that
\[
\lim_{n\to\infty} \int_1^M r^2 \ga_n(dr) = \int_1^M r^2 \nu_\ag(dr).
\]
\end{proof}

The following lemma is the key argument to prove Proposition~\ref{p:hat}.

\begin{lemma} \label{l:22}
Under Assumption~\ref{a:L2L2}, the following statements hold:
\begin{align}
\frac{1}{k}\sum_{i=1}^n \lp \frac{R_i}{R_{(k)}} \rp^{2}   I_{R_i \ge R_{(k)} }  &\convP \frac{\ag}{\ag-2}; \label{eq:2k}\\
\frac{1}{k}\sum_{i=1}^n \lp \frac{R_i}{b(n/k)} \rp^{2}   I_{R_i \ge b(n/k) } &\convP \frac{\ag}{\ag-2}. \label{eq:2b}
\end{align}

\end{lemma}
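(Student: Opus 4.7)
The plan is to view each sum as the integral of the test function $r\mapsto r^{2}I_{r\ge 1}$ against a random measure that converges vaguely in probability to $\nu_{\ag}$, then to handle the unboundedness of this test function on $(0,\infty]$ by a truncation-plus-tail-bound scheme. Introduce the random measures
\[
\ga_n^{(1)} := \frac{1}{k}\sum_{i=1}^n \ep_{R_i/b(n/k)}, \qquad \ga_n^{(2)} := \frac{1}{k}\sum_{i=1}^n \ep_{R_i/R_{(k)}}.
\]
The left-hand sides of \eqref{eq:2b} and \eqref{eq:2k} then equal $\int_{1}^{\infty} r^{2}\ga_n^{(1)}(dr)$ and $\int_{1}^{\infty} r^{2}\ga_n^{(2)}(dr)$, respectively, and Lemma~\ref{l:R}(ii) and~(iv) supply $\ga_n^{(j)}\convv\nu_{\ag}$ in probability in $M_{+}(0,\infty]$ for $j=1,2$.

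For fixed $M>1$, I would split each integral as $\int_{1}^{M}+\int_{M}^{\infty}$. Lemma~\ref{l:cont-2} shows that the head is a continuous functional of $\ga$ at $\nu_{\ag}$, so the continuous mapping theorem gives
\[
\int_{1}^{M} r^{2}\ga_n^{(j)}(dr) \convP \int_{1}^{M} r^{2}\nu_{\ag}(dr) = \frac{\ag}{\ag-2}\lb 1 - M^{2-\ag}\rb,
\]
which tends to $\ag/(\ag-2)$ as $M\to\infty$.

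For the tail in \eqref{eq:2b}, independence together with Lemma~\ref{l:alM} yields
\[
E\lb \int_{M}^{\infty} r^{2}\ga_n^{(1)}(dr)\rb = \frac{n}{k}E\lb \lp R/b(n/k)\rp^{2}I_{R\ge Mb(n/k)}\rb \to \frac{\ag}{\ag-2}M^{2-\ag},
\]
so Markov's inequality makes this tail negligible in probability once $M$ is large, yielding \eqref{eq:2b}. For the tail in \eqref{eq:2k} I would pick $\delta\in(0,1)$ and use Lemma~\ref{l:R}(iii) to restrict to the event $\{R_{(k)}\ge (1-\delta)b(n/k)\}$, whose probability tends to $1$; on that event one has the pathwise comparison
\[
\int_{M}^{\infty} r^{2}\ga_n^{(2)}(dr) \le (1-\delta)^{-2}\int_{M(1-\delta)}^{\infty} r^{2}\ga_n^{(1)}(dr),
\]
since both $(R_i/R_{(k)})^2 \le (1-\delta)^{-2}(R_i/b(n/k))^2$ and $\{R_i \ge MR_{(k)}\} \subset \{R_i \ge M(1-\delta)b(n/k)\}$ on that event. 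The same Markov argument applied to $\ga_n^{(1)}$ at the shifted threshold $M(1-\delta)$ then controls the tail of $\ga_n^{(2)}$; sending $M\to\infty$ (and $\delta\downarrow 0$) gives \eqref{eq:2k}.

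The main obstacle is that $r^{2}I_{r\ge 1}$ is neither compactly supported nor bounded on $(0,\infty]$, so vague convergence of $\ga_n^{(j)}$ does not directly transfer to convergence of the integrals. The truncation step works because $\int_{M}^{\infty} r^{2}\nu_{\ag}(dr)\to 0$ as $M\to\infty$, which uses precisely the assumption $\ag>2$ from Assumption~\ref{a:L2L2}. For \eqref{eq:2k} there is the added twist that the random $R_{(k)}$ enters both the weights and the threshold, and the remedy is the deterministic comparison above, made possible by Lemma~\ref{l:R}(iii).
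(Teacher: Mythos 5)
Your proof is correct and follows essentially the same route as the paper: truncate the unbounded test function $r^2 I_{r\ge 1}$ at $M$, handle the head via Lemma~\ref{l:R} and the continuity in Lemma~\ref{l:cont-2}, and kill the tail via Markov's inequality and Lemma~\ref{l:alM}, using Lemma~\ref{l:R}(iii) to pass from the $R_{(k)}$-normalized sum to the $b(n/k)$-normalized one (the paper packages the same three steps as an application of the second converging together theorem). If anything, your explicit $(1-\delta)^{-2}$ factor in the pathwise comparison is slightly more careful than the paper's corresponding bound, which silently drops it.
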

\begin{proof}
The proofs for~\eqref{eq:2k} and~\eqref{eq:2b} are almost the same, so we only prove~\eqref{eq:2k} to save space. Let $\hat{\ga}_{n,k}=\frac{1}{k} \sum_{i=1}^n \ep_{R_i / R_{(k)}}$, and recall that $\hat{\ga}_{n,k} \convP \nu_{\ag}$ (see Lemma~\ref{l:R} (iv)). Since
\[
\frac{1}{k}\sum_{i=1}^n \lp \frac{R_i}{R_{(k)}} \rp^{2}   I_{R_i \ge R_{(k)} }= \int_1^\infty r^2 \hat{\ga}_{n,k}(dr),
\]
we need to show that
\[
\int_1^\infty r^2  \hat{\ga}_{n,k}(dr)
\convP \int_1^\infty r^{2} \nu_\ag(dr)=\frac{\ag}{\ag-2}.
\]
To prove this convergence, we use the second converging together theorem, Theorem 3.5
in \citet{resnick:2007}, (also stated as Theorem 3.2 of \citet{billingsley:1999}).

Let
\begin{align*}
V_{n,k} = \int_1^\infty r^2 \hat{\ga}_{n,k}(dr),  \ \ \
&  V= \int_1^\infty r^2 \nu_\ag(dr);\\
V_{n,k}^{(M)} = \int_1^M r^2 \hat{\ga}_{n,k}(dr),  \ \ \
& V^{(M)}= \int_1^M r^2 \nu_\ag(dr).
\end{align*}
To show the desired convergence $V_{n,k} \convP V$ (equivalently, $V_{n,k} \convd V$), we must verify that
\begin{equation} \label{eq:VnM}
\forall \ M > 1, \ \ \ V_{n,k}^{(M)} \convd V^{(M)}, \ \ \ \ {\rm as} \ n \to\infty;
\end{equation}
\begin{equation} \label{eq:VM}
V^{(M)} \convd V, \ \ \ {\rm as} \ M\to\infty;
\end{equation}
\begin{equation} \label{eq:bill}
\forall \ \eg > 0, \ \ \ \lim_{M\to\infty} \limsup_{n\to\infty}
 {\rm pr}\lp | V_{n,k}^{(M)}- V_{n,k} | > \eg \rp = 0.
\end{equation}
Convergence \eqref{eq:VnM} follows from Lemma~\ref{l:R} (iv)
and Lemma~\ref{l:cont-2}. Convergence \eqref{eq:VM} holds since for $\ag>2$
\[
\int_M^\infty r^2 \nu_\ag(dr)=\int_M^\infty r^2 \ag r^{-\ag-1} dr =\frac{\ag}{\ag-2} M^{2-\ag} \to 0, \ \ \ \ {\rm as}\ M \to \infty.
\]
It remains to show that $\forall \eg>0$,
\[
\lim_{M\to\infty} \limsup_{n\to\infty}
 {\rm pr}\lp | V_{n,k}^{(M)}- V_{n,k} | > \eg \rp =  \lim_{M\to\infty} \limsup_{n\to\infty}
 {\rm pr}\lp \int_M^\infty r^2 \hat\ga_{n,k}(dr) > \eg \rp = 0.
\]

Fix $\eg> 0$ and $\eta> 0$. Observe that
\[
{\rm pr}\lp \int_M^\infty r^2 \hat\ga_{n,k}(dr) > \eg \rp
 \le Q_1(n) + Q_2(n),
\]
where
\[
Q_1(n) = {\rm pr}\lp \int_M^\infty r^2 \hat\ga_{n,k}(dr) > \eg, \
\left | \frac{R_{(k)}}{b(n/k)} - 1 \right |  < \eta \rp,\ \
Q_2(n) = {\rm pr}\lp \left | \frac{R_{(k)}}{b(n/k)} - 1 \right |  \ge  \eta\rp.
\]
By Lemma~\ref{l:R} (iii), $ \limsup_{n\to\infty} Q_2(n) = 0$. For $Q_1(n)$, we start with the bound
\begin{align*}
Q_1(n)
&\le {\rm pr}\lp  \int_M^\infty r^2 \hat\ga_{n,k}(dr) > \eg, \
\frac{R_{(k)}}{b(n/k)} > 1- \eta \rp\\
&= {\rm pr} \lp  \int_M^\infty r^2 \frac{1}{k} \sum_{i=1}^n
\ep_{R_i/ R_{(k)}}(dr)  > \eg, \
\frac{R_{(k)}}{b(n/k)} > 1- \eta \rp.
\end{align*}
Conditions $R_i/ R_{(k)} > M$ and ${R_{(k)}}/{b(n/k)} > 1- \eta$
imply ${R_{i}}/{b(n/k)} > M(1- \eta)$, so
\begin{align*}
Q_1(n)
&\le {\rm pr} \lp  \int_{M(1-\eta)}^\infty r^2 \frac{1}{k} \sum_{i=1}^n
\ep_{R_i/b(n/k)}(dr)   > \eg \rp\\
&= {\rm pr} \lp  \frac{1}{k} \sum_{i=1}^n  \lp \frac{R_i}{b(n/k)} \rp^2
I_{R_i \ge M(1-\eta)b(n/k) } >\eg\rp.
\end{align*}
Then, it follows from Markov's inequality and Lemma~\ref{l:alM} that
\[
Q_1(n) \le \frac{1}{\eg} \frac{n}{k} E \lb \lp \frac{R_1}{b(n/k)} \rp^2
I_{R_1 \ge M(1-\eta)b(n/k) } \rb  \to \frac{1}{\eg} \frac{\ag}{\ag-2} \{M(1-\eta)\}^{2-\ag}, \ \ \ {\rm as} \ n \to \infty.
\]
This bound goes to 0 as $M \to \infty$ since $\ag>2$.
\end{proof}

The following lemma follows from Theorem 3.8 of~\citet{mcdiarmid:1998}. It states a Bernstein type inequality, which is the key technique to prove Proposition~\ref{p:bar}.

\begin{lemma} \label{l:Bern}
Let $\bZ_n=(Z_1, \ldots, Z_n)$ with the $Z_i$ taking values in a
Lebesgue measurable subset $\cZ$ of an Euclidean space. Let $f$ be a real-valued function defined on $\cZ^n$. For $(z_1, \ldots, z_i) \in \cZ^{i}$, $1 \le i \le n$, put
\begin{equation} \label{eq:gi}
g_i(z_1, \ldots, z_i):=E\lb f(\bZ_n)| Z_j=z_j, 1 \le j \le i\rb -E\lb f(\bZ_n)| Z_j=z_j, 1 \le j \le i-1\rb.
\end{equation}
Define the maximum deviation by
\begin{equation} \label{eq:b}
b := \max_{1\le i\le n}\sup_{(z_1, \ldots, z_i) \in \cZ^i} g_i(z_1, \ldots, z_i),
\end{equation}
and define the supremum sum of variances by
\begin{equation} \label{eq:v}
\hat{v} := \sup_{(z_1, \ldots, z_n) \in \cZ^n} \sum_{i=1}^n \var\lb g_i(z_1, \ldots, z_{i-1}, Z_i^{\prime})\rb,
\end{equation}
where $Z_i^{\prime}$ is an independent copy of $Z_i$ conditional on $Z_j=z_j$, $1 \le j \le i-1$. If $b$ and $\hat{v}$ are finite, then for any $\eg \ge 0$,
\[
{\rm pr} \lp f(\bZ_n) - E[f(\bZ_n)] \ge t \rp \le \exp \lp \frac{-\eg^2}{2(\hat{v}+b\eg/3)}  \rp.
\]
\end{lemma}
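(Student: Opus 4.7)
The plan is to recognize Lemma \ref{l:Bern} as a direct application of Theorem 3.8 of \citet{mcdiarmid:1998}, a Bernstein-type inequality for bounded martingale differences with a uniform bound on the sum of conditional variances. The whole proof reduces to (i) identifying the Doob martingale associated with $f(\bZ_n)$, and (ii) showing that $b$ and $\hat v$ in \eqref{eq:b} and \eqref{eq:v} serve as almost-sure upper bounds on, respectively, the martingale increments and the sum of their conditional variances.

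First I would form the Doob martingale $M_i := E[f(\bZ_n) | Z_1,\ldots,Z_i]$ for $i = 0, \ldots, n$, with $M_0 = E[f(\bZ_n)]$ and $M_n = f(\bZ_n)$. Its increments $D_i := M_i - M_{i-1}$ coincide exactly with $g_i(Z_1,\ldots,Z_i)$ from \eqref{eq:gi}, and $\{D_i\}$ is a martingale difference sequence with respect to the natural filtration $\cF_i := \sigma(Z_1,\ldots,Z_i)$. The telescoping identity $f(\bZ_n) - E[f(\bZ_n)] = \sum_{i=1}^n D_i$ rewrites the target event as a one-sided deviation of a martingale partial sum.

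Next I would verify the two hypotheses. From \eqref{eq:b} we immediately obtain $D_i \le b$ pointwise. For the variance, observe that conditional on $\{Z_j = z_j, j \le i-1\}$, the random variable $D_i = g_i(z_1,\ldots,z_{i-1}, Z_i)$ has the same distribution as $g_i(z_1,\ldots,z_{i-1}, Z_i')$ appearing in \eqref{eq:v}, since $Z_i'$ is an independent copy of $Z_i$ given the conditioning; hence $\var(D_i | \cF_{i-1})$, evaluated at $(z_1,\ldots,z_{i-1})$, equals $\var[g_i(z_1,\ldots,z_{i-1}, Z_i')]$. Summing over $i$ and taking the supremum over $(z_1,\ldots,z_n) \in \cZ^n$ yields $\sum_{i=1}^n \var(D_i | \cF_{i-1}) \le \hat v$ almost surely.

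The main obstacle, such as it is, is only notational: one must correctly match the definition of $\hat v$ in \eqref{eq:v} to the conditional-variance bound required by the cited theorem. No delicate estimates or approximation arguments are involved. Once the matching is in hand, invoking Theorem 3.8 of \citet{mcdiarmid:1998} for the martingale $\sum_i D_i$ with almost-sure bounds $b$ and $\hat v$ immediately yields
\[
{\rm pr}\lp f(\bZ_n) - E[f(\bZ_n)] \ge \eg \rp
\le \exp \lp \frac{-\eg^2}{2(\hat v + b\eg/3)} \rp,
\]
which is the claim.
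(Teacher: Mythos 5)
Your proposal is correct and follows exactly the route the paper takes: the paper gives no separate argument but simply asserts that the lemma follows from Theorem 3.8 of McDiarmid (1998), and your reduction via the Doob martingale $M_i = E[f(\bZ_n)\mid Z_1,\ldots,Z_i]$, the identification $D_i = g_i(Z_1,\ldots,Z_i)$, and the verification that $b$ and $\hat v$ bound the increments and the sum of conditional variances is precisely the intended application. The only cosmetic point is that the lemma's statement has a typographical mismatch ($t$ versus $\eg$ in the deviation event), which you have implicitly and correctly resolved by using $\eg$ throughout.
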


\clearpage

\appendixtwo
\section{Proof of Theorem~\ref{t:cs} in Section~\ref{s:ecc}}\label{s:pr}

Recall~\eqref{eq:ec}, i.e., the definition:
\[
\hat{\sg}_{n,k} =\frac{1}{k}\sum_{i=1}^n \lip \frac{X_i}{R_{(k)}}, \frac{Y_i}{R_{(k)}} \rip I_{R_i \ge R_{(k)} }.
\]
To prove the consistency of $\hat{\sg}_{n,k}$ for the extremal covariance $\sg_{XY}$, we consider the following sequence of random variables
\begin{equation} \label{eq:sg}
\sg_{n,k} :=\frac{1}{k}\sum_{i=1}^n \lip \frac{X_i}{b(n/k)}, \frac{Y_i}{b(n/k)} \rip I_{R_i \ge b(n/k) }.
\end{equation}
We emphasize that $\sg_{n,k}$ is not observable since $b(\cdot)$ is unknown. However, $b(n/k)$ can be estimated by its consistent estimator $R_{(k)}$, and it can be shown that replacing $b(n/k)$ by $R_{(k)}$ ensures that the difference between $\sg_{n,k}$ and $\hat{\sg}_{n,k}$ is asymptotically negligible, which will be shown in Proposition~\ref{p:hat}. Thus, the key argument for establishing the consistency is to show that $\sg_{n,k}$ converges in probability to $\sg_{XY}$, which is proven in the following proposition.

\begin{proposition} \label{p:bar}
Under Assumption~\ref{a:L2L2},
\[
\sg_{n,k} \convP \sg_{XY}.
\]
\end{proposition}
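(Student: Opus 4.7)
The plan is to decompose $\sg_{n,k} - \sg_{XY} = (\sg_{n,k}-E\sg_{n,k}) + (E\sg_{n,k}-\sg_{XY})$ and treat each piece separately. The central difficulty is that the summand $\lip x,y\rip$ is unbounded on $\cA_1$, so neither the $M_0$ convergence of Definition~\ref{d:RVinL2L2} nor the Bernstein inequality of Lemma~\ref{l:Bern} applies directly; a truncation at level $Mb(n/k)$ will be required for both pieces, with $M$ eventually sent to infinity.

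For the bias, set $b_n := b(n/k)$. By the definition of the quantile function, ${\rm pr}(R \ge b_n) = k/n$, so
\[
E\sg_{n,k} = \frac{n}{k} E\lb \lip X/b_n, Y/b_n\rip I_{R \ge b_n}\rb = E\lb \lip X/b_n, Y/b_n\rip \ \Big|\ R \ge b_n\rb.
\]
Since $n/k \to \infty$, we have $b_n \to \infty$, and the argument of Proposition~\ref{p:sglim} (which depends only on $b(\cdot)\to\infty$) applies with $b_n$ in place of $b(n)$, yielding $E\sg_{n,k}\to\sg_{XY}$. This step is precisely where $\ag>2$ enters, via the tail bound $\int_{\cA_j}\lip x,y\rip\mu(dx,dy)\le\{\ag/(\ag-2)\}j^{2-\ag}$ in the proof of Lemma~\ref{l:cont_m0}.

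For the fluctuation, fix $M>1$ and define the truncated estimator
\[
\sg_{n,k}^{(M)} := \frac{1}{k}\sum_{i=1}^n \lip X_i/b_n, Y_i/b_n\rip I_{b_n \le R_i \le Mb_n}.
\]
By Cauchy-Schwarz, $|\sg_{n,k}-\sg_{n,k}^{(M)}| \le k^{-1}\sum_i (R_i/b_n)^2 I_{R_i>Mb_n}$, whose expectation is $(n/k)\,E[(R/b_n)^2 I_{R>Mb_n}]$; Markov's inequality and Lemma~\ref{l:alM} give $\limsup_n {\rm pr}(|\sg_{n,k}-\sg_{n,k}^{(M)}|>\eg) \le \eg^{-1}\{\ag/(\ag-2)\}M^{2-\ag}\to 0$ as $M\to\infty$, and the same bound controls $|E\sg_{n,k}-E\sg_{n,k}^{(M)}|$. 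To apply Lemma~\ref{l:Bern} with $f := \sg_{n,k}^{(M)}$, independence of the $Z_i=(X_i,Y_i)$ reduces the $g_i$ in~\eqref{eq:gi} to $k^{-1}\{\phi(Z_i) - E\phi(Z)\}$, where $\phi(z) := \lip x/b_n, y/b_n\rip I_{b_n\le r\le Mb_n}$ is bounded in absolute value by $M^2$ on its support, giving $b \le 2M^2/k$ in~\eqref{eq:b}. For the variance bound, the estimate $\phi(Z)^2 \le M^2 (R/b_n)^2 I_{R\ge b_n}$ combined with Lemma~\ref{l:alM} yields $\hat v \le (M^2/k)\{\ag/(\ag-2)\} + o(1)$ in~\eqref{eq:v}. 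Lemma~\ref{l:Bern} applied to $\pm\sg_{n,k}^{(M)}$ then gives ${\rm pr}(|\sg_{n,k}^{(M)}-E\sg_{n,k}^{(M)}|>\eg) \le 2\exp(-c\, k\eg^2/M^2)\to 0$ for each fixed $M$ and $\eg>0$.

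Assembling the pieces via an $\eg/M$ argument---first choose $M$ large so the truncation errors (both in probability and in mean) are uniformly bounded by $\eg/3$, then let $n,k\to\infty$ so the bias of the truncated estimator and its Bernstein fluctuation each fall below $\eg/3$---completes the proof. The principal obstacle is the unboundedness of $\lip x,y\rip$ outside a neighborhood of the origin, forcing the two-stage truncation; the assumption $\ag>2$ is indispensable precisely because it makes the controlling quantity $\{\ag/(\ag-2)\}M^{2-\ag}$ finite and vanishing as $M\to\infty$.
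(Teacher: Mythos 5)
Your proof is correct, and it takes a genuinely different route from the paper's. The paper applies the Bernstein-type inequality (Lemma~\ref{l:Bern}) directly to the \emph{untruncated} absolute-deviation functional $f(\bZ_n)=|\sg_{n,k}-E\sg_{n,k}|$, bounding the maximum deviation and the sum of variances via Lemma~\ref{l:alM}, and then needs a separate step showing $E[f(\bZ_n)]\to 0$ by an independence/variance computation (since Bernstein only controls $f-Ef$, not $f$ itself); the bias is handled exactly as you do, through Proposition~\ref{p:sglim} with $b(n/k)$ in place of $b(n)$. You instead truncate at $Mb(n/k)$, apply Lemma~\ref{l:Bern} to the bounded truncated sum, control the truncation error in probability and in mean by Markov plus Lemma~\ref{l:alM}, and close with a converging-together argument in $M$. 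What your route buys: the summands of $\sg_{n,k}^{(M)}$ are genuinely bounded by $M^2$, so the hypothesis $b<\infty$ of Lemma~\ref{l:Bern} holds verbatim, whereas the paper's bound $g_i\le c_1/k$ rests on $\|x_i\|\|y_i\|/b(n/k)^2$ being controlled, a quantity whose supremum over all of $L^2\times L^2$ (as required by~\eqref{eq:b}) is not finite without some restriction --- your truncation is precisely the device that repairs this, and to that extent your argument is the more careful one. What the paper's route buys: no auxiliary parameter $M$, no three-term converging-together bookkeeping, and a single application of the concentration inequality. Both arguments rest on the same two pillars --- Proposition~\ref{p:sglim} for the bias and Lemma~\ref{l:alM} for the second-moment control --- and the condition $\ag>2$ enters identically, through the finiteness and decay of $\{\ag/(\ag-2)\}M^{2-\ag}$. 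Minor quibbles only: your $\hat v$ bound should carry a factor $2$ and read $\hat v\le (2M^2/k)\{\ag/(\ag-2)+o(1)\}$, and $E\sg_{n,k}$ equals the conditional expectation exactly only if ${\rm pr}(R\ge b(n/k))=k/n$ holds exactly rather than asymptotically; neither affects the conclusion.
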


\begin{proof}

Set
\begin{equation} \label{eq:sgbar}
\bar{\sg}_{n,k} := E\lb \lip \frac{X_1}{b(n/k)}, \frac{Y_1}{b(n/k)} \rip \Bigg|\|X_1\| \vee \|Y_1\| >b(n/k)\rb.
\end{equation}
Then, by Proposition~\ref{p:sglim}, $\bar{\sg}_{n,k} \to \sg_{XY}$, so it remains to show that $|\sg_{n,k} - \bar{\sg}_{n,k}| \convP 0$.

Let $\bZ_n = (Z_1, \ldots, Z_n)$, where $Z_i = (X_i, Y_i)$, and $\bz_n = (z_1, \ldots, z_n)$, where $z_i = (x_i, y_i)$, for $1 \le i \le n$. Consider a map $f: (L^2 \times L^2)^n \to \mbR$ defined by
\[
f(\bz_n) := \lmo \frac{1}{k}\sum_{i=1}^n \lip \frac{x_i}{b(n/k)}, \frac{y_i}{b(n/k)} \rip I_{r_i \ge b(n/k) }- \frac{n}{k}E\lb \lip \frac{X_1}{b(n/k)}, \frac{Y_1}{b(n/k)} \rip I_{R_1 >b(n/k)}\rb \rmo.
\]
Then, we have that
\[
|\sg_{n,k} - \bar{\sg}_{n,k}| = f(\bZ_n) - E[f(\bZ_n)] + E[f(\bZ_n)].
\]
We aim to show that $f(\bZ_n) - E[f(\bZ_n)] \convP 0$ and $E[f(\bZ_n)] \to 0$.

To establish the convergence, $f(\bZ_n) - E[f(\bZ_n)] \convP 0$, we use the Bernstein type concentration inequality in Lemma~\ref{l:Bern}. Since the $(X_i,Y_i)$ are independent, the deviation function in~\eqref{eq:gi} has the following form
\[
g_i(z_1, \ldots, z_i)= E \lb f(z_1, \ldots, z_{i-1},z_i, Z_{i+1}, \ldots, Z_n)-f(z_1, \ldots, z_{i-1},Z_i, Z_{i+1}, \ldots, Z_n) \rb.
\]
Then, using the fact that $\lmo |x|- |y| \rmo \le |x-y|$, we have that
\begin{align*}
g_i(z_1, \ldots, z_i)
&\le \frac{1}{k} E\lb \lmo  \lip \frac{x_i}{b(n/k)}, \frac{y_i}{b(n/k)} \rip I_{r_i \ge b(n/k) } - \lip \frac{X_i}{b(n/k)}, \frac{Y_i}{b(n/k)} \rip I_{R_i \ge b(n/k) }       \rmo \rb \\
&\le \frac{1}{k} \lbr \frac{|\lip x_i, y_i\rip|}{b(n/k)^2} + \frac{k}{n} \frac{n}{k} E \lb \lp \frac{R_i}{b(n/k)} \rp^2 I_{R_i \ge b(n/k) }  \rb \rbr\\
&\le \frac{1}{k} \lbr \frac{\|x_i\| \| y_i\|}{b(n/k)^2} + \frac{n}{k} E \lb \lp \frac{R_i}{b(n/k)} \rp^2 I_{R_i \ge b(n/k) }  \rb \rbr.
\end{align*}
Since $(x_i,y_i) \in L^2\times L^2$ and $\frac{n}{k} E \lb \lp {R_i}/{b(n/k)} \rp^2 I_{R_i \ge b(n/k) }  \rb \to \ag/(\ag-2)$ by Lemma~\ref{l:alM},  we have that $g_{i}(z_1, \ldots, z_i) \le {c_1}/{k}$, for some constant $c_1>0$. Therefore, the maximum deviation $b$ in~\eqref{eq:b} is bounded by $c_1/k$.

Next we investigate the upper bound for the sum of variances $\hat{v}$ in~\eqref{eq:v}. Since $E[g_i(z_1, \ldots, z_{i-1}, Z_i^{\prime})]=0$ by the law of total probability, we have that
\begin{align*}
&\var\lb g_i(z_1, \ldots, z_{i-1}, Z_i^{\prime})\rb \\
&= E [g_i^2(z_1, \ldots, z_{i-1}, Z_i^{\prime})]\\
&= E \lb \lbr  f(z_1, \ldots, z_{i-1},Z_i^{\prime}, Z_{i+1}, \ldots, Z_n)-f(z_1, \ldots, z_{i-1},Z_i, Z_{i+1}, \ldots, Z_n) \rbr^2 \rb \\
&\le \frac{1}{k^2} E\lb \lbr  \lip \frac{X_i^{\prime}}{b(n/k)}, \frac{Y_i^{\prime}}{b(n/k)} \rip I_{R_i^{\prime} \ge b(n/k) } - \lip \frac{X_i}{b(n/k)}, \frac{Y_i}{b(n/k)} \rip I_{R_i \ge b(n/k) }       \rbr^2 \rb \\
&\le \frac{2}{k^2} E \lb \lip \frac{X_i}{b(n/k)}, \frac{Y_i}{b(n/k)} \rip^2 I_{R_i \ge b(n/k) }   \rb \\
&\le \frac{2}{k^2} \lbr  \frac{k}{n} \frac{n}{k} E \lb  \lp \frac{R_i}{b(n/k)} \rp^2  I_{R_i \ge b(n/k) }  \rb \rbr.
\end{align*}
It then again follows from Lemma~\ref{l:alM} that $\var\lb g_i(z_1, \ldots, z_{i-1}, Z_i^{\prime})\rb  \le c_2/(nk)$ for some $c_2>0$. Then the supremum sum of variances $\hat{v}$ is bounded above by $c_2/k$. Therefore by Lemma~\ref{l:Bern}, for any $\eg>0$
\[
{\rm pr} \lp f(\bZ_n) - E[f(\bZ_n)]  \ge \eg \rp \le \exp \lp \frac{-k\eg^2}{c_1+c_2\eg/3} \rp.
\]
If we apply this inequality to $-f(\bZ_n)$, then we obtain the following `two-sided' inequality
\[
{\rm pr} \lp |f(\bZ_n) - E[f(\bZ_n)]|  \ge \eg \rp \le 2\exp \lp \frac{-k\eg^2}{c_1+c_2\eg/3} \rp.
\]
From this, we obtain that  $f(\bZ_n) - E[f(\bZ_n)] \convP 0$.

Next, to show $E[f(\bZ_n)] \to 0$, we set, for $1 \le i \le n$
\[
\Delta_i =  \lip \frac{X_i}{b(n/k)}, \frac{Y_i}{b(n/k)} \rip I_{R_i \ge b(n/k) }- E\lb \lip \frac{X_1}{b(n/k)}, \frac{Y_1}{b(n/k)} \rip I_{R_1 >b(n/k)}\rb.
\]
Then, we have that
\begin{align*}
E \lb f(\bZ_n) \rb = \frac{n}{k} E \lb \lmo \frac{1}{n} \sum_{i=1}^n \Delta_i \rmo \rb &\le \frac{n}{k} \lbr E \lb \lp \frac{1}{n} \sum_{i=1}^n \Delta_i \rp^2 \rb \rbr^{1/2}\\
&= \frac{n}{k} \lbr E \lb   \frac{1}{n^2} \sum_{i=1}^n \Delta_i^2 + \frac{1}{n^2} \sum_{i \neq j} \Delta_i\Delta_j \rb \rbr^{1/2}.
\end{align*}
Since the $\Delta_i$ are independent, $E[\Delta_i\Delta_j]=0$, for $i \neq j$. Therefore,
\begin{align*}
&E \lb f(\bZ_n) \rb \\
&\le \frac{n^{1/2}}{k} \lbr E \lb    \Delta_1^2  \rb \rbr^{1/2} \\
&= \frac{n^{1/2}}{k} \lbr  E  \lb \lp \lip \frac{X_1}{b(n/k)}, \frac{Y_1}{b(n/k)} \rip I_{R_1 \ge b(n/k) } - E\lb \lip \frac{X_1}{b(n/k)}, \frac{Y_1}{b(n/k)} \rip I_{R_1 >b(n/k)}\rb  \rp^2\rb \rbr^{1/2}\\
& = \frac{n^{1/2}}{k} \lbr  \var \lb  \lip \frac{X_1}{b(n/k)}, \frac{Y_1}{b(n/k)} \rip I_{R_1 \ge b(n/k) } \rb \rbr^{1/2} \\
& \le \frac{n^{1/2}}{k} \lbr  E  \lb \lip \frac{X_1}{b(n/k)}, \frac{Y_1}{b(n/k)} \rip^2 I_{R_1 \ge b(n/k) } \rb \rbr^{1/2}\\
& \le  \frac{n^{1/2}}{k}  \lbr E \lb  \lp \frac{R_1}{b(n/k)} \rp^2  I_{R_1 \ge b(n/k) }  \rb \rbr^{1/2}.
\end{align*}
Therefore, by Lemma~\ref{l:alM} we have that
\[
E \lb f(\bZ_n) \rb \le \frac{n^{1/2}}{k}  \lbr  \frac{k}{n} \frac{n}{k} E \lb  \lp \frac{R_1}{b(n/k)} \rp^2  I_{R_1 \ge b(n/k) }  \rb \rbr^{1/2} \le \frac{c_3}{k^{1/2}},
\]
for some $c_3>0$, which completes the proof.
\end{proof}

\begin{proposition} \label{p:hat}
Under Assumption~\ref{a:L2L2},
\[
|\hat{\sg}_{n,k} - \sg_{n,k} | \convP 0.
\]
\end{proposition}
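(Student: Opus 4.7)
The plan is to absorb the discrepancy between the estimator $R_{(k)}$ and the unknown normalizer $b(n/k)$ using Lemma~\ref{l:R}(iii), then reduce the remainder to an annulus-mass estimate handled by a variant of Lemma~\ref{l:22}. Define
\[
T_{n,k}(\tau) := \frac{1}{k}\sum_{i=1}^n \lip \frac{X_i}{b(n/k)}, \frac{Y_i}{b(n/k)} \rip I_{R_i \ge \tau b(n/k)},
\]
so that $\sg_{n,k} = T_{n,k}(1)$ and, after pulling scalars out of the inner product,
\[
\hat\sg_{n,k} = \lp \frac{b(n/k)}{R_{(k)}} \rp^2 T_{n,k}\lp \frac{R_{(k)}}{b(n/k)} \rp.
\]
I would then decompose
\begin{align*}
\hat\sg_{n,k} - \sg_{n,k} &= \lb \lp \frac{b(n/k)}{R_{(k)}} \rp^2 - 1 \rb \sg_{n,k} \\
&\quad + \lp \frac{b(n/k)}{R_{(k)}} \rp^2 \lb T_{n,k}\lp \frac{R_{(k)}}{b(n/k)} \rp - T_{n,k}(1) \rb,
\end{align*}
and handle the two summands separately.

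For the first summand, $(b(n/k)/R_{(k)})^2 - 1 \convP 0$ by Lemma~\ref{l:R}(iii) and $\sg_{n,k}$ is bounded in probability by Proposition~\ref{p:bar}, so the product vanishes in probability via Slutsky's theorem. For the second summand, the prefactor $(b(n/k)/R_{(k)})^2$ is tight, so it suffices to show $T_{n,k}(R_{(k)}/b(n/k)) - T_{n,k}(1) \convP 0$. Fix $\eta > 0$ and restrict to the event $A_n(\eta) := \lbr |R_{(k)}/b(n/k) - 1| < \eta \rbr$, which has probability tending to one by Lemma~\ref{l:R}(iii). On $A_n(\eta)$ the difference of the two indicators is supported on the annulus $\lbr R_i \in [(1-\eta) b(n/k), (1+\eta) b(n/k)) \rbr$, and Cauchy--Schwarz together with $\|X_i\| \, \|Y_i\| \le R_i^2$ yield
\[
\lmo T_{n,k}\lp \frac{R_{(k)}}{b(n/k)} \rp - T_{n,k}(1) \rmo \le \frac{1}{k}\sum_{i=1}^n \lp \frac{R_i}{b(n/k)} \rp^2 I_{(1-\eta) b(n/k) \le R_i < (1+\eta) b(n/k)}.
\]
A verbatim adaptation of the proof of Lemma~\ref{l:22}, with the lower truncation at $1$ replaced by an arbitrary $c > 0$, shows that the right-hand side converges in probability to $\frac{\ag}{\ag-2}\lb (1-\eta)^{2-\ag} - (1+\eta)^{2-\ag}\rb$, which tends to zero as $\eta \to 0$. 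A standard two-step argument (first choose $\eta$ small, then let $n \to \infty$ on $A_n(\eta)$) then delivers $T_{n,k}(R_{(k)}/b(n/k)) - T_{n,k}(1) \convP 0$, and combining both summands yields the claim.

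The main obstacle is the annulus-mass estimate: one must ensure that the empirical weighted measure $k^{-1}\sum_i (R_i/b(n/k))^2 \ep_{R_i/b(n/k)}$ places arbitrarily little mass on shrinking annuli around $r=1$. This amounts to extending Lemma~\ref{l:22} from the one-sided truncation $r \ge 1$ to two-sided truncations $r \in [c_1, c_2]$ via the same second-converging-together argument, which presents no real difficulty once one verifies that $r^2 I_{[c_1, c_2]}$ can be uniformly approximated from above and below by continuous compactly supported functions whose $\nu_\ag$-integrals differ by an arbitrarily small amount. All remaining steps reduce to routine applications of Slutsky's theorem and the already-established tail order statistic behaviour from Lemma~\ref{l:R}.
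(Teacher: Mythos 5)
Your proof is correct and follows essentially the same route as the paper's: both arguments separate the perturbation of the normalizing constant, killed by Lemma~\ref{l:R}(iii) multiplied by an $O_P(1)$ factor, from the perturbation of the indicator threshold, and both control the latter through the weighted empirical measure $k^{-1}\sum_{i}(R_i/b(n/k))^2\ep_{R_i/b(n/k)}$. The only substantive difference is that you bound the indicator discrepancy by the mass of a shrinking annulus around $r=1$, which needs the routine extension of Lemma~\ref{l:22} to general thresholds that you correctly flag, whereas the paper writes it as the difference of the two sums in Lemma~\ref{l:22}, each converging to $\ag/(\ag-2)$.
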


\begin{proof}
Consider the following decomposition
\[
|\hat{\sg}_{n,k} - \sg_{n,k} | \le P_1(n) + P_2(n),
\]
where
\begin{align*}
&P_1(n) :=\lmo \frac{1}{k}\sum_{i=1}^n \lip \frac{X_i}{R_{(k)}}, \frac{Y_i}{R_{(k)}} \rip  \lbr I_{R_i \ge R_{(k)} } - I_{R_i \ge b(n/k) }\rbr \rmo,\\
&P_2(n) :=\lmo \frac{1}{k}\sum_{i=1}^n \lbr \lip \frac{X_i}{R_{(k)}}, \frac{Y_i}{R_{(k)}} \rip - \lip \frac{X_i}{b(n/k)}, \frac{Y_i}{b(n/k)} \rip \rbr
 I_{R_i \ge b(n/k) } \rmo.
\end{align*}
We will show that each of the two parts goes to 0. We first focus on $P_1(n)$. Observe that
\begin{align*}
P_1(n) &\le \lp\frac{b(n/k)}{R_{(k)}} \rp^2  \frac{1}{k}\sum_{i=1}^n  \lmo \lip \frac{X_i}{R_i}, \frac{Y_i}{R_i} \rip \rmo \lp \frac{R_i}{b(n/k)} \rp^{2} \lmo  I_{R_i \ge R_{(k)} } - I_{R_i \ge b(n/k) } \rmo \\
&\le \lp\frac{b(n/k)}{R_{(k)}} \rp^2  \frac{1}{k}\sum_{i=1}^n  \lp \frac{R_i}{b(n/k)} \rp^{2} \lmo  I_{R_i \ge R_{(k)} } - I_{R_i \ge b(n/k) } \rmo \\
& =  \lp\frac{b(n/k)}{R_{(k)}} \rp^2 \lmo \frac{1}{k}\sum_{i=1}^n \lp \frac{R_i}{b(n/k)} \rp^{2}   I_{R_i \ge R_{(k)} } - \frac{1}{k}\sum_{i=1}^n \lp \frac{R_i}{b(n/k)} \rp^{2}   I_{R_i \ge b(n/k) } \rmo \\
& =  \lmo \frac{1}{k}\sum_{i=1}^n \lp \frac{R_i}{R_{(k)}} \rp^{2}   I_{R_i \ge R_{(k)} } - \lp\frac{b(n/k)}{R_{(k)}} \rp^2  \frac{1}{k}\sum_{i=1}^n \lp \frac{R_i}{b(n/k)} \rp^{2}   I_{R_i \ge b(n/k) } \rmo
\end{align*}
Then, by Lemma~\ref{l:R} (iii), we have that $\lp {b(n/k)}/{R_{(k)}} \rp^2 \convP 1$. By Lemma~\ref{l:22} that $\frac{1}{k}\sum_{i=1}^n \lp{R_i}/{R_{(k)}} \rp^{2}   I_{R_i \ge R_{(k)} } \convP \ag/(\ag-2)$ and $\frac{1}{k}\sum_{i=1}^n \lp {R_i}/{b(n/k)} \rp^{2}   I_{R_i \ge b(n/k) }\convP \ag/(\ag-2)$. Therefore, we have that  $P_1(n) \convP 0$.

Now we work on $P_2(n)$. Observe that
\begin{align*}
P_2(n) & = \lmo \frac{1}{k}\sum_{i=1}^n \lip \frac{X_i}{R_i}, \frac{Y_i}{R_i} \rip R_i^2 \lp \frac{1}{R_{(k)}^2} - \frac{1}{b(n/k)^2}\rp
 I_{R_i \ge b(n/k) } \rmo \\
& \le  \lmo \frac{b(n/k)^2}{R_{(k)}^2} - 1 \rmo  \frac{1}{k}\sum_{i=1}^n  \lmo \lip \frac{X_i}{R_i}, \frac{Y_i}{R_i}  \rip \rmo \lp \frac{R_i}{b(n/k)} \rp^{2}
 I_{R_i \ge b(n/k)} \\
 & \le  \lmo \frac{b(n/k)^2}{R_{(k)}^2} - 1 \rmo  \frac{1}{k}\sum_{i=1}^n \lp \frac{R_i}{b(n/k)} \rp^{2} I_{R_i \ge b(n/k)}.
\end{align*}
By Lemma~\ref{l:alM}, we have that $\frac{1}{k}\sum_{i=1}^n \lp {R_i}/{b(n/k)} \rp^{2} I_{R_i \ge b(n/k)} = O_P(1)$, and by Lemma~\ref{l:R} (iii), we have that $b(n/k) / R_{(k)} \convP 1$. Thus, $P_2(n) \convP 0$.
\end{proof}

\noindent{Proof of Theorem~\ref{t:cs}.} It follows from Propositions~\ref{p:bar} and~\ref{p:hat}.

\clearpage

\appendixthree
\section{Proof of Lemma~\ref{l:sim1} in Section~\ref{s:pw} and its extension to randomly sampled weights} \label{s:pr_sim}

\allowdisplaybreaks
\noindent{\sc Proof of Lemma~\ref{l:sim1}:} \
We begin by noting that there exists an increasing sequence $b(n) \to \infty$ and $\nu$ in $M_+(\mbR_+^2)$ such that
\begin{equation} \label{eq:dRV2}
n{\rm pr} \lp \frac{(|Z_1|, |Z_2|)}{b(n)} \in \cdot \rp \convv \nu.
\end{equation}
Here, $\nu$ is defined up to a multiplicative constant. Since $Z_1$ and $Z_2$ are independent, $\nu$ has its mass only on the axes.

The specific form of $\nu$ can be given by, for $\bx = [z_1, z_2]^{\top}$,
\begin{equation} \label{eq:cnu}
\nu([0, \bx]^c) = c(z_1)^{-\ag} +c(z_2)^{-\ag},
\end{equation}
where $c = 1/(1+ (1-\rho^2)^{\ag/2})$. To see this, observe that
\begin{align*}
n{\rm pr}(\| Z_1 \phi_1 \|  \vee  \| \rho Z_1 \phi_1+(1-\rho^2)^{1/2} Z_2 \phi_2 \| > b(n)) = n{\rm pr}(|Z_1|  \vee  (\rho^2 Z_1^2 + (1-\rho^2) Z_2^2)^{1/2} > b(n)).
\end{align*}
By~\eqref{eq:mu-s}, it follows that $\nu(\{(z_1,z_2): |z_1| \vee ( \rho^2 z_1^2 + (1-\rho^2) z_2^2 )^{1/2} >1 \})$ must be 1. Note that $\{(z_1,z_2): |z_1| \vee ( \rho^2 z_1^2 + (1-\rho^2) z_2^2 )^{1/2} >1 \} = \{(z_1,z_2): z_1>1 \ {\rm or}\ z_2>1/(1-\rho^2)^{1/2} \}$. Using this, we have that
\begin{align*}
\nu(\{(z_1,z_2): |z_1| \vee ( \rho^2 z_1^2 + (1-\rho^2) z_2^2 )^{1/2} >1 \})
=&c + c (1-\rho^2)^{\ag/2} = 1.
\end{align*}

We claim that
\begin{align} \label{eq:sxy}
    & \sg_{XY}=\rho  \frac{c\ag}{\ag-2};\\ \label{eq:sx}
    & \sg_{X}^2 =\frac{c\ag}{\ag-2};\\ \label{eq:sy}
    & \sg_{Y}^2 =\lbr \rho^2 + (1-\rho^2)^{\ag/2} \rbr \frac{c\ag}{\ag-2}.
\end{align}

We first work on~\eqref{eq:sxy}. Since the terms with the $N_j$ do not affect the extremal behavior of $X$ and $Y$, we have that by Proposition~\ref{p:sglim}
\begin{align*}
&\sg_{XY} \\
&= \lim_{n \to \infty} E \lb \lip \frac{Z_1 \phi_1}{b(n)}, \frac{\rho Z_1 \phi_1+(1-\rho^2)^{1/2} Z_2 \phi_2}{b(n)}\rip  \Bigg| \| Z_1 \phi_1 \|  \vee  \| \rho Z_1 \phi_1+(1-\rho^2)^{1/2} Z_2 \phi_2 \| > b(n)\rb  \\
&= \lim_{n \to \infty} \frac{1}{{\rm pr}(\| Z_1 \phi_1 \|  \vee  \| \rho Z_1 \phi_1+(1-\rho^2)^{1/2} Z_2 \phi_2 \| > b(n))} \times \\
&\ \ \ \ \ \ \ \ \ \ \ \ \ \ \ \ \ \ E \lb \lip \frac{Z_1 \phi_1}{b(n)}, \frac{\rho Z_1 \phi_1+(1-\rho^2)^{1/2} Z_2 \phi_2}{b(n)}\rip  I_{\| Z_1 \phi_1 \|  \vee  \| \rho Z_1 \phi_1+(1-\rho^2)^{1/2} Z_2 \phi_2 \| > b(n)}\rb  \\
&= \lim_{n \to \infty} \frac{1}{{\rm pr}(\| Z_1 \phi_1 \|  \vee  \| \rho Z_1 \phi_1+(1-\rho^2)^{1/2} Z_2 \phi_2 \| > b(n))}   E \lb  \rho \frac{Z_1^2 }{b(n)^2} I_{|Z_1| \vee (\rho^2 Z_1^2 +(1-\rho^2)Z_2^2 )^{1/2} > b(n)} \rb.
\end{align*}

It then follows from~\eqref{eq:dRV2} and~\eqref{eq:mu-s} that
\begin{align*}
\sg_{XY} &= \lim_{n \to \infty} n E \lb  \rho \frac{Z_1^2 }{b(n)^2} I_{|Z_1| \vee (\rho^2 Z_1^2 +(1-\rho^2)Z_2^2 )^{1/2} > b(n)} \rb \\
&= \lim_{n \to \infty} \int_{\mbR^2_+} \rho z_1^2 I_{ |z_1| \vee ( \rho^2 z_1^2 + (1-\rho^2) z_2^2 )^{1/2} >1} n{\rm pr}\lp \frac{|Z_1|}{b(n)} \in dz_1, \frac{|Z_2|}{b(n)}\in dz_2 \rp \\
&= \int_{\mbR_+^2}  \rho z_1^2 I_{ |z_1| \vee ( \rho^2 z_1^2 + (1-\rho^2) z_2^2 )^{1/2} >1} \ \nu(dz_1, dz_2) \\
& = \int_{\mbR_+}  \rho z_1^2 I_{ \{(z_1, 0): z_1>1\} } \ c\nu_{\ag}(dz_1) + \int_{\mbR_+}  \rho z_1^2 I_{ \{(0, z_2): z_2>1/(1-\rho^2)^{1/2} \}} \ c\nu_{\ag}(dz_2) \\
&= \int_{1}^{\infty}  \rho z_1^2 \ c\nu_{\alpha}(dz_1) + 0 = \rho \frac{c\ag}{\ag-2}.
\end{align*}

Analogously, for~\eqref{eq:sx} we can show that
\begin{align*}
&\sg_{X}^2 \\
&= \lim_{n \to \infty} E \lb \lip \frac{Z_1 \phi_1}{b(n)}, \frac{Z_1 \phi_1}{b(n)}\rip  \Bigg| \| Z_1 \phi_1 \|  \vee  \| \rho Z_1 \phi_1+(1-\rho^2)^{1/2} Z_2 \phi_2 \| > b(n)\rb  \\
&= \lim_{n \to \infty} \frac{1}{{\rm pr}(\| Z_1 \phi_1 \|  \vee  \| \rho Z_1 \phi_1+(1-\rho^2)^{1/2} Z_2 \phi_2 \| > b(n))}  E \lb  \frac{Z_1^2 }{b(n)^2} I_{|Z_1| \vee (\rho^2 Z_1^2 +(1-\rho^2)Z_2^2 )^{1/2} > b(n)} \rb \\
&= \lim_{n \to \infty} nE \lb  \frac{Z_1^2 }{b(n)^2} I_{|Z_1| \vee (\rho^2 Z_1^2 +(1-\rho^2)Z_2^2 )^{1/2} > b(n)} \rb \\
&=\frac{c\ag}{\ag-2}.
\end{align*}

Next, we work on~\eqref{eq:sy}. Observe that
\begin{align*}
&\sg_{Y}^2 \\
&= \lim_{n \to \infty} E \lb   \frac{\|\rho Z_1 \phi_1+(1-\rho^2)^{1/2} Z_2 \phi_2 \|^2}{b(n)^2}  \Bigg| \| Z_1 \phi_1 \|  \vee  \| \rho Z_1 \phi_1+(1-\rho^2)^{1/2} Z_2 \phi_2 \| > b(n)\rb  \\
&= \lim_{n \to \infty} \frac{1}{{\rm pr}(\| Z_1 \phi_1 \|  \vee  \| \rho Z_1 \phi_1+(1-\rho^2)^{1/2} Z_2 \phi_2 \| > b(n))} \times \\
&\ \ \ \ \ \ \ \ \ \ \ \ \ \ \ \ \ \ \ \ \ \ \ \ \ \ \ E \lb  \frac{\rho^2 Z_1^2 +(1-\rho^2) Z_2^2 }{b(n)^2} I_{|Z_1| \vee (\rho^2 Z_1^2 +(1-\rho^2)Z_2^2 )^{1/2} > b(n)} \rb.
\end{align*}
Then, again it follows from~\eqref{eq:dRV2} and~\eqref{eq:mu-s} that
\begin{align*}
\sg_{Y}^2 &= \lim_{n \to \infty} nE \lb  \frac{\rho^2 Z_1^2 +(1-\rho^2) Z_2^2 }{b(n)^2} I_{|Z_1| \vee (\rho^2 Z_1^2 +(1-\rho^2)Z_2^2 )^{1/2} > b(n)} \rb   \\
&= \lim_{n \to \infty} \int_{\mbR^2_+} \lbr \rho^2 z_1^2 + (1-\rho^2)z_2^2 \rbr I_{ |z_1| \vee ( \rho^2 z_1^2 + (1-\rho^2) z_2^2 )^{1/2} >1} n{\rm pr}\lp \frac{|Z_1|}{b(n)} \in dz_1, \frac{|Z_2|}{b(n)}\in dz_2 \rp \\
&= \int_{\mbR_+^2}  \lbr \rho^2 z_1^2 + (1-\rho^2)z_2^2 \rbr I_{ |z_1| \vee ( \rho^2 z_1^2 + (1-\rho^2) z_2^2 )^{1/2} >1} \ \nu(dz_1, dz_2) \\
& = \int_{\mbR_+}  \rho^2 z_1^2 I_{ \{(z_1, 0): z_1>1\} } \ c\nu_{\ag}(dz_1) + \int_{\mbR_+}  (1-\rho^2)z_2^2 I_{ \{(0, z_2): z_2>1/(1-\rho^2)^{1/2} \}} \ c\nu_{\ag}(dz_2) \\
&= \int_{1}^{\infty}  \rho^2 z_1^2 \ c\nu_{\alpha}(dz_1) + \int_{1/(1-\rho^2)^{1/2}}^{\infty}   (1-\rho^2)z_2^2 \ c\nu_{\alpha}(dz_2) \\
&= \rho^2\frac{c\ag}{\ag-2} + (1-\rho^2)^{\ag/2}\frac{c\ag}{\ag-2} = \{\rho^2+(1-\rho^2)^{\ag/2}\}\frac{c\ag}{\ag-2}.
\end{align*}
This completes the proof of Lemma~\ref{l:sim1}.

An alternative approach to proving Lemma~\ref{l:sim1} is to identify the joint angular measure $\Gg$ of $[X,Y]^{\top}$ and then apply it in the decomposition given by~\eqref{eq:cov_pol}. Since only one of $Z_1$, $Z_2$ can be extreme, $\Gg$ concentrates on $(\phi_1, \rho\phi_1)$ with mass $\nu(\{(z_1, z_2) : |z_1| \vee |\rho z_1| >1\})=c$ or $(\bzero, \phi_2)$ with mass $\nu(\{(z_1, z_2) : 0 \vee |(1-\rho^2)^{1/2} z_2| >1\})=c (1-\rho^2)^{\ag/2}$. Therefore, we have that
\[
\Gg = c \ep_{(\phi_1, \rho\phi_1)} + c (1-\rho^2)^{\ag/2} \ep_{(\bzero, \phi_2)}.
\]
Computing $\Gg$ in~\eqref{eq:cov_pol}, we obtain~\eqref{eq:sxy}. Similarly,~\eqref{eq:sx} and~\eqref{eq:sy} follow.

\medskip\medskip

We now extend Lemma~\ref{l:sim1} by considering randomly occurring
weights, so each direction in the function space can contribute either
a heavy-tailed or a light-weight component.

\begin{lemma} \label{l:sim2}
Let $\{A_i\}$ and $\{B_i\}$ be independent sequences of   iid Bernoulli
random variables with $P(A_i=1)=p_A$ and $P(B_i=1)=p_B$, independent of the $Z_i$.  Put
\[
X(t) = \sum_{i=1}^2 \phi_i(t)\{Z_iA_i + N_i(1-A_i)\}, \quad
Y(t) = \sum_{i=1}^2 \phi_i(t)\{Z_iB_i + N_i(1-B_i)\}.
\]
Then, under assumptions in Lemma~\ref{l:sim1}, we have that
\[
\rho_{XY} = (p_A)^{1/2}(p_B)^{1/2}.
\]
\end{lemma}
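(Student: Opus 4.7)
The plan is to mirror the alternative proof of Lemma~\ref{l:sim1} by identifying the joint angular measure $\Gg$ of $[X,Y]^{\T}$ explicitly and then applying the polar decomposition~\eqref{eq:cov_pol} together with its analogs for $\sg_X^2$ and $\sg_Y^2$. Orthonormality of $\{\phi_1,\phi_2\}$ and the idempotent identities $A_i(1-A_i)=0$, $B_i(1-B_i)=0$ give
\[
\|X\|^2 = \sum_{i=1}^{2}\{Z_i^2 A_i + N_i^2(1-A_i)\},\quad
\|Y\|^2 = \sum_{i=1}^{2}\{Z_i^2 B_i + N_i^2(1-B_i)\},
\]
together with a matching expansion for $\lip X, Y\rip$. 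Since the $N_i$ are Gaussian and $(A_i, B_i)$ is independent of $(Z_i, N_i)$, the extremal behavior of $(X,Y)$ is driven entirely by those indices $i$ for which $A_i\vee B_i = 1$ and $|Z_i|$ is large; regular variation of $[X,Y]^{\T}$ with index $-\ag$ then follows from Proposition~7.1 and Example~7.3 of~\citet{meinguet:segers:2010}.

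I would next enumerate the values of $(A_i, B_i)$ to identify the atoms of $\Gg$. For each $i\in\{1,2\}$, conditional on $|Z_i|$ being the dominant component, three configurations contribute: $A_i=B_i=1$ (probability $p_A p_B$) produces direction $\pm(\phi_i,\phi_i)$; $A_i=1$, $B_i=0$ (probability $p_A(1-p_B)$) produces $\pm(\phi_i,\bzero)$; and $A_i=0$, $B_i=1$ (probability $(1-p_A)p_B$) produces $\pm(\bzero,\phi_i)$, with the sign tracking that of $Z_i$ and carrying the weights $c_\pm$. The configuration $A_i=B_i=0$ yields no extreme, since only the Gaussian $N_i$ appears on axis $\phi_i$. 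Summing over $i\in\{1,2\}$ and the sign of $Z_i$, and normalizing so that~\eqref{eq:mu1} holds, yields the discrete probability measure
\[
\Gg = \frac{1}{2D} \sum_{i=1}^{2} \sum_{s\in\{+,-\}} c_s \lbr p_A p_B\,\ep_{s(\phi_i,\phi_i)} + p_A(1-p_B)\,\ep_{s(\phi_i,\bzero)} + (1-p_A)p_B\,\ep_{s(\bzero,\phi_i)} \rbr,
\]
with $D := p_A + p_B - p_A p_B$.

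The angular integrals are then immediate. Only the atoms $\pm(\phi_i,\phi_i)$ contribute to $\int \lip \thg_X,\thg_Y\rip\,\Gg(d\thg_X,d\thg_Y)$, each with inner product $+1$, giving $p_A p_B/D$. The atoms with nonzero first coordinate all satisfy $\|\thg_X\|=1$ and carry total mass $p_A/D$, so $\int\|\thg_X\|^2\,\Gg = p_A/D$; symmetrically $\int\|\thg_Y\|^2\,\Gg = p_B/D$. The polar decomposition then gives $\sg_{XY}=\{\ag/(\ag-2)\}\,p_Ap_B/D$, $\sg_X^2=\{\ag/(\ag-2)\}\,p_A/D$, and $\sg_Y^2=\{\ag/(\ag-2)\}\,p_B/D$. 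Forming $\rho_{XY}=\sg_{XY}/(\sg_X\sg_Y)$, the factors $\ag/(\ag-2)$ and $D$ cancel, leaving $\rho_{XY}=p_A p_B/(p_A p_B)^{1/2}=(p_A)^{1/2}(p_B)^{1/2}$.

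The main obstacle is the rigorous verification that Gaussian noise and cross-axis products do not perturb $\Gg$, so that it really is the discrete measure above. As in the proof of Lemma~\ref{l:sim1}, this reduces to checking that after rescaling by $b(n)$ the terms involving $N_i/b(n)$ and the cross products $Z_iZ_j/b(n)^2$ with $i\ne j$ contribute only at lower order in the $M_0$-sense. These pieces are handled by the same truncation-and-expectation argument used to derive~\eqref{eq:sxy}--\eqref{eq:sy}, using independence of $(A_i,B_i)$ from $(Z_i,N_i)$, $E[N_i]=0$ (which kills the cross terms $Z_iN_i$ in $\lip X,Y\rip$), and the bound $n\,b(n)^{-(\ag+2)}\to 0$ valid under $\ag>2$.
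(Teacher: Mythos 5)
Your proof is correct, but it takes the angular-measure route that the paper only sketches as an ``alternative approach'' at the end of its proof of Lemma~\ref{l:sim1}; the paper's actual proof of Lemma~\ref{l:sim2} is a direct computation. Specifically, the paper conditions on $(A_1,A_2,B_1,B_2)$, reduces each of $\sg_X^2$, $\sg_Y^2$, $\sg_{XY}$ to limits of the form $\lim_n nE[\,\cdot\,/b(n)^2\, I_{\cdot>b(n)}]$, and evaluates these via the vague convergence~\eqref{eq:dRV2} of $(|Z_1|,|Z_2|)/b(n)$ to the axis-supported measure $\nu$, obtaining $\sg_X^2\propto p_A$, $\sg_Y^2\propto p_B$, $\sg_{XY}\propto p_Ap_B$ with a common constant that cancels in the ratio. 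You instead read off the discrete joint angular measure $\Gg$ on $\mbS_{L^2\times L^2}$ from the four configurations of $(A_i,B_i)$ and integrate against it via~\eqref{eq:cov_pol}; your atoms and weights ($p_Ap_B$ at $\pm(\phi_i,\phi_i)$, $p_A(1-p_B)$ at $\pm(\phi_i,\bzero)$, $(1-p_A)p_B$ at $\pm(\bzero,\phi_i)$) exactly mirror the paper's conditional decomposition, and your normalizing constant --- which for general $c_\pm$ should be $1/\{2D(c_++c_-)\}$ rather than $1/(2D)$ --- cancels in the ratio anyway. Your version makes it more transparent why the answer is $(p_Ap_B)^{1/2}$, at the cost of having to justify that $\Gg$ really is this discrete measure, i.e., that the Gaussian components and the cross terms $Z_iN_i$ are negligible in the limit; you correctly flag this as the remaining work and treat it at the same level of rigor as the paper, which simply asserts that the terms with the $N_j$ do not affect the extremal behavior. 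One small quibble: the cross terms in $\lip X,Y\rip$ vanish because they are of order $|Z_i||N_i|/b(n)^2$ on the relevant event (so the limit is $O(nb(n)^{-1-\ag})\to 0$), not because $E[N_i]=0$ --- the exceedance indicator is not independent of $N_i$, so the zero-mean argument alone does not close that step.
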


\begin{proof}

We will show that
\[
\sg_{X}^2 =\frac{c\ag}{\ag-2}p_A; \quad
\sg_{Y}^2 =\frac{c\ag}{\ag-2}p_B; \quad
\sg_{XY}=\frac{c\ag}{\ag-2}p_Ap_B.
\]
We first work on $\sg_{X}^2$. With the choice of $b(n)$ defined by $n^{-1} = {\rm pr}((A_1^2Z_1^2 + A_2^2Z_2^2)^{1/2} \vee (B_1^2Z_1^2 + B_2^2Z_2^2)^{1/2}  > b(n))$, it follows from the law of total expectation that
\begin{align*}
\sg_{X}^2
&= \lim_{n \to \infty} n E \lb  \frac{A_1^2Z_1^2 + A_2^2Z_2^2}{b(n)^2} I_{(A_1^2Z_1^2 + A_2^2Z_2^2)^{1/2} \vee (B_1^2Z_1^2 + B_2^2Z_2^2)^{1/2} > b(n)} \rb \\
&= \lim_{n \to \infty} n E \lb  \frac{Z_1^2 + Z_2^2}{b(n)^2} I_{(Z_1^2 + Z_2^2)^{1/2} \vee (B_1^2Z_1^2 + B_2^2Z_2^2)^{1/2} > b(n)} \rb \times p_A^2 \\
&+\lim_{n \to \infty} n E \lb  \frac{Z_1^2}{b(n)^2} I_{|Z_1| \vee (B_1^2Z_1^2 + B_2^2Z_2^2)^{1/2} > b(n)} \rb \times p_A(1-p_A) \\
& + \lim_{n \to \infty} n E \lb  \frac{Z_2^2}{b(n)^2} I_{|Z_2| \vee (B_1^2Z_1^2 + B_2^2Z_2^2)^{1/2} > b(n)} \rb \times (1-p_A)p_A.
\end{align*}
Then, using~\eqref{eq:dRV2} and vague convergence, it simplifies to
\begin{align*}
\sg_{X}^2
& = \lb \int_{\mbR_+}  z_1^2  I_{ \{(z_1, 0): z_1>1\} } \ c\nu_{\ag}(dz_1) + \int_{\mbR_+}  z_2^2 I_{ \{(0, z_2): z_2>1 \}} \ c\nu_{\ag}(dz_2) \rb p_A^2 \\
& + \lb \int_{\mbR_+}  z_1^2  I_{ \{(z_1, 0): z_1>1\} } \ c\nu_{\ag}(dz_1)  \rb p_A(1-p_A) + \lb \int_{\mbR_+}  z_2^2  I_{ \{(z_1, 0): z_2>1\} } \ c\nu_{\ag}(dz_1)  \rb (1-p_A)p_A\\
&= \frac{2c\ag}{\ag-2}p_A^2 + \frac{c\ag}{\ag-2}p_A(1-p_A) + \frac{c\ag}{\ag-2}(1-p_A)p_A= \frac{2c\ag}{\ag-2}p_A.
\end{align*}
Similarly, we can get $\sg_{Y}^2 = \frac{2c\ag}{\ag-2}p_B$.

Turning to $\sg_{XY}$, we have that
\begin{align*}
&\sg_{XY} \\
&= \lim_{n \to \infty} n E \lb  \frac{A_1B_1Z_1^2 + A_2B_2Z_2^2}{b(n)^2} I_{(A_1^2Z_1^2 + A_2^2Z_2^2)^{1/2} \vee (B_1^2Z_1^2 + B_2^2Z_2^2)^{1/2} > b(n)} \rb \\
&= \lim_{n \to \infty} n E \lb  \frac{Z_1^2 + Z_2^2}{b(n)^2} I_{(Z_1^2 + Z_2^2)^{1/2} > b(n)} \rb \times p_A^2p_B^2 +\lim_{n \to \infty} n E \lb  \frac{Z_1^2}{b(n)^2} I_{|Z_1| > b(n)} \rb \times p_Ap_B(1-p_Ap_B) \\
& + \lim_{n \to \infty} n E \lb  \frac{Z_2^2}{b(n)^2} I_{|Z_2|  > b(n)} \rb \times (1-p_Ap_B)p_Ap_B\\
& = \lb \int_{\mbR_+}  z_1^2  I_{ \{(z_1, 0): z_1>1\} } \ c\nu_{\ag}(dz_1) + \int_{\mbR_+}  z_2^2 I_{ \{(0, z_2): z_2>1 \}} \ c\nu_{\ag}(dz_2) \rb p_A^2p_B^2 \\
& + \lb \int_{\mbR_+}  z_1^2  I_{ \{(z_1, 0): z_1>1\} } \ c\nu_{\ag}(dz_1)  \rb p_Ap_B(1-p_Ap_B) + \lb \int_{\mbR_+}  z_2^2  I_{ \{(0, z_2): z_2>1\} } \ c\nu_{\ag}(dz_2)  \rb (1-p_Ap_B)p_Ap_B\\
&= \frac{2c\ag}{\ag-2}p_A^2p_B^2 + \frac{c\ag}{\ag-2}p_Ap_B(1-p_Ap_B) + \frac{c\ag}{\ag-2}(1-p_Ap_B)p_Ap_B\\
&= \frac{2c\ag}{\ag-2}p_Ap_B.
\end{align*}
\end{proof}

\clearpage

\appendixfour
\section{Supplementary simulation results} \label{s:a_sim}

\subsection{Simulation results on the consistency of $\hat{\rho}_{n,k}$} \label{ss:a_sim}

This section reports the magnitude of empirical biases, measured by the absolute difference between the average and the theoretical value, along with standard errors computed as the sample standard deviations. Using the optimal $k$s selected by the method from~\citet{danielsson:Ergun:deHaan:deVries:2016}, the results are shown in Tables~\ref{t:sim3} and~\ref{t:sim4} for $\alpha =  \{4,5\}$. The results with the optimal $k$s selected by the method from~\citet{clauset:shalizi:newman:2009} are provided in  Tables~\ref{t:sim5},~\ref{t:sim6}, and~\ref{t:sim7}, for $\alpha =  \{3,4,5\}$.

In general, the estimators obtained with the method of \citet{danielsson:Ergun:deHaan:deVries:2016} exhibit substantially lower bias,  but larger standard errors compared to those obtained with the method of \citet{clauset:shalizi:newman:2009}. The lower bias is likely due to the fact that tail quantiles are particularly sensitive to small changes in probabilities. By minimizing the KS distance between the empirical and theoretical tail quantiles, as done by~\citet{danielsson:Ergun:deHaan:deVries:2016}, the method  appears to achieve lower bias in finite samples. The larger standard errors result from this method selecting a much smaller value of $k$ compared to \citet{clauset:shalizi:newman:2009}. In terms of MSE, no substantial difference appears to exist between the two methods.

\begin{table}[ht]
\centering
\caption{The magnitude of empirical biases (standard errors) of $\hat{\rho}_{n,k}$ when $\ag=4$. Optimal $k$s are selected using the method from~\citet{danielsson:Ergun:deHaan:deVries:2016}, with averages of $k=9$ ($N=100$), $k=29$ ($N=500$), and  $k=74$ ($N=2000$).
\label{t:sim3}}
\medskip
\begin{tabular}{cccc}
  \hline\hline
 $\rho_{XY}$ & $N=100$ &  $N=500$ & $N=2000$  \\
  \hline
-1.0 & 0.08 (0.04) & 0.06 (0.03) & 0.04 (0.03) \\
-0.9 & 0.13 (0.11) & 0.10 (0.09) & 0.08 (0.08) \\
-0.8 & 0.12 (0.14) & 0.10 (0.11) & 0.07 (0.10) \\
-0.7 & 0.10 (0.15) & 0.07 (0.12) & 0.05 (0.11) \\
-0.6 & 0.07 (0.15) & 0.06 (0.12) & 0.04 (0.11) \\
-0.5 & 0.05 (0.15) & 0.04 (0.12) & 0.02 (0.12) \\
-0.4 & 0.02 (0.15) & 0.03 (0.11) & 0.02 (0.12) \\
-0.3 & 0.01 (0.13) & 0.01 (0.11) & 0.00 (0.11) \\
-0.2 & 0.00 (0.12) & 0.00 (0.09) & 0.00 (0.07) \\
-0.1 & 0.00 (0.10) & 0.00 (0.07) & 0.01 (0.06) \\
0.0 & 0.00 (0.09) & 0.00 (0.05) & 0.00 (0.04) \\
0.1 & 0.00 (0.10) & 0.00 (0.07) & 0.00 (0.06) \\
0.2 & 0.01 (0.12) & 0.00 (0.09) & 0.00 (0.08) \\
0.3 & 0.01 (0.13) & 0.01 (0.11) & 0.01 (0.10) \\
0.4 & 0.03 (0.14) & 0.01 (0.12) & 0.01 (0.11) \\
0.5 & 0.04 (0.16) & 0.03 (0.12) & 0.02 (0.11) \\
0.6 & 0.08 (0.15) & 0.05 (0.12) & 0.03 (0.12) \\
0.7 & 0.10 (0.14) & 0.07 (0.12) & 0.05 (0.11) \\
0.8 & 0.12 (0.13) & 0.09 (0.10) & 0.07 (0.10) \\
0.9 & 0.13 (0.11) & 0.10 (0.08) & 0.08 (0.09) \\
1.0 & 0.09 (0.05) & 0.06 (0.03) & 0.04 (0.03) \\
\hline\hline
\end{tabular}
\end{table}

\begin{table}[ht]
\centering
\caption{The magnitude of empirical biases (standard errors) of $\hat{\rho}_{n,k}$ when $\ag=5$. Optimal $k$s are selected using the method from~\citet{danielsson:Ergun:deHaan:deVries:2016}, with averages of $k=9$ ($N=100$), $k=29$ ($N=500$), and  $k=79$ ($N=2000$).
\label{t:sim4}}
\medskip
\begin{tabular}{cccc}
  \hline\hline
 $\rho_{XY}$ & $N=100$ &  $N=500$ & $N=2000$  \\
  \hline
-1.0 & 0.14 (0.06) & 0.10 (0.04) & 0.08 (0.04) \\
-0.9 & 0.20 (0.12) & 0.17 (0.09) & 0.13 (0.08) \\
-0.8 & 0.19 (0.14) & 0.16 (0.10) & 0.12 (0.10) \\
-0.7 & 0.16 (0.14) & 0.13 (0.10) & 0.10 (0.10) \\
-0.6 & 0.12 (0.14) & 0.10 (0.11) & 0.08 (0.11) \\
-0.5 & 0.09 (0.14) & 0.07 (0.11) & 0.06 (0.10) \\
-0.4 & 0.06 (0.14) & 0.05 (0.09) & 0.04 (0.09) \\
-0.3 & 0.04 (0.13) & 0.03 (0.09) & 0.02 (0.09) \\
-0.2 & 0.01 (0.12) & 0.02 (0.08) & 0.01 (0.06) \\
-0.1 & 0.00 (0.11) & 0.01 (0.06) & 0.00 (0.06) \\
0.0 & 0.01 (0.10) & 0.00 (0.06) & 0.00 (0.04) \\
0.1 & 0.01 (0.11) & 0.01 (0.07) & 0.01 (0.06) \\
0.2 & 0.02 (0.12) & 0.01 (0.08) & 0.00 (0.07) \\
0.3 & 0.03 (0.13) & 0.03 (0.09) & 0.02 (0.08) \\
0.4 & 0.06 (0.13) & 0.05 (0.09) & 0.04 (0.09) \\
0.5 & 0.09 (0.14) & 0.07 (0.10) & 0.06 (0.09) \\
0.6 & 0.13 (0.14) & 0.10 (0.11) & 0.08 (0.11) \\
0.7 & 0.16 (0.13) & 0.13 (0.10) & 0.10 (0.10) \\
0.8 & 0.19 (0.13) & 0.15 (0.10) & 0.13 (0.09) \\
0.9 & 0.21 (0.12) & 0.17 (0.09) & 0.14 (0.08) \\
1.0 & 0.14 (0.06) & 0.11 (0.04) & 0.08 (0.04) \\
\hline\hline
\end{tabular}
\end{table}

\begin{table}[ht]
\centering
\caption{The magnitude of empirical biases (standard errors) of $\hat{\rho}_{n,k}$ when $\ag=3$. Optimal $k$s are selected using the method from~\citet{clauset:shalizi:newman:2009}, with averages of $k=60$ ($N=100$), $k=273$ ($N=500$), and  $k=991$ ($N=2000$).
\label{t:sim5}}
\medskip
\begin{tabular}{cccc}
  \hline\hline
 $\rho_{XY}$ & $N=100$ &  $N=500$ & $N=2000$  \\
  \hline
-1.0 & 0.10 (0.03) & 0.10 (0.02) & 0.09 (0.02) \\
-0.9 & 0.15 (0.06) & 0.14 (0.05) & 0.13 (0.04) \\
-0.8 & 0.13 (0.08) & 0.12 (0.06) & 0.12 (0.05) \\
-0.7 & 0.11 (0.08) & 0.10 (0.06) & 0.10 (0.04) \\
-0.6 & 0.09 (0.08) & 0.09 (0.06) & 0.08 (0.04) \\
-0.5 & 0.07 (0.09) & 0.06 (0.06) & 0.06 (0.04) \\
-0.4 & 0.05 (0.08) & 0.05 (0.05) & 0.05 (0.04) \\
-0.3 & 0.03 (0.07) & 0.03 (0.05) & 0.03 (0.03) \\
-0.2 & 0.02 (0.06) & 0.02 (0.03) & 0.02 (0.02) \\
-0.1 & 0.01 (0.04) & 0.01 (0.02) & 0.01 (0.01) \\
0.0 & 0.00 (0.04) & 0.00 (0.01) & 0.00 (0.01) \\
0.1 & 0.01 (0.04) & 0.01 (0.02) & 0.01 (0.01) \\
0.2 & 0.01 (0.06) & 0.02 (0.04) & 0.02 (0.02) \\
0.3 & 0.03 (0.07) & 0.03 (0.04) & 0.03 (0.03) \\
0.4 & 0.05 (0.08) & 0.05 (0.05) & 0.04 (0.04) \\
0.5 & 0.07 (0.08) & 0.07 (0.05) & 0.06 (0.04) \\
0.6 & 0.09 (0.09) & 0.09 (0.06) & 0.08 (0.05) \\
0.7 & 0.11 (0.09) & 0.11 (0.06) & 0.10 (0.04) \\
0.8 & 0.13 (0.08) & 0.13 (0.05) & 0.12 (0.04) \\
0.9 & 0.14 (0.07) & 0.14 (0.05) & 0.13 (0.04) \\
1.0 & 0.10 (0.03) & 0.10 (0.02) & 0.09 (0.02) \\
\hline\hline
\end{tabular}
\end{table}

\begin{table}[ht]
\centering
\caption{The magnitude of empirical biases (standard errors) of $\hat{\rho}_{n,k}$ when $\ag=4$. Optimal $k$s are selected using the method from~\citet{clauset:shalizi:newman:2009}, with averages of $k=53$ ($N=100$), $k=226$ ($N=500$), and  $k=791$ ($N=2000$).
\label{t:sim6}}
\medskip
\begin{tabular}{cccc}
  \hline\hline
 $\rho_{XY}$ & $N=100$ &  $N=500$ & $N=2000$  \\
  \hline
-1.0 & 0.15 (0.03) & 0.15 (0.02) & 0.14 (0.02) \\
-0.9 & 0.22 (0.06) & 0.22 (0.03) & 0.21 (0.03) \\
-0.8 & 0.20 (0.06) & 0.20 (0.04) & 0.19 (0.03) \\
-0.7 & 0.17 (0.06) & 0.17 (0.04) & 0.16 (0.03) \\
-0.6 & 0.14 (0.06) & 0.14 (0.04) & 0.13 (0.02) \\
-0.5 & 0.11 (0.06) & 0.10 (0.03) & 0.10 (0.02) \\
-0.4 & 0.08 (0.06) & 0.07 (0.03) & 0.07 (0.02) \\
-0.3 & 0.05 (0.06) & 0.05 (0.03) & 0.05 (0.02) \\
-0.2 & 0.03 (0.05) & 0.03 (0.02) & 0.03 (0.01) \\
-0.1 & 0.01 (0.05) & 0.01 (0.02) & 0.01 (0.01) \\
0.0 & 0.00 (0.04) & 0.00 (0.02) & 0.00 (0.01) \\
0.1 & 0.01 (0.05) & 0.01 (0.02) & 0.01 (0.01) \\
0.2 & 0.03 (0.05) & 0.03 (0.02) & 0.03 (0.01) \\
0.3 & 0.05 (0.05) & 0.05 (0.03) & 0.05 (0.02) \\
0.4 & 0.07 (0.06) & 0.08 (0.03) & 0.07 (0.02) \\
0.5 & 0.11 (0.06) & 0.10 (0.03) & 0.10 (0.02) \\
0.6 & 0.14 (0.06) & 0.14 (0.04) & 0.13 (0.02) \\
0.7 & 0.17 (0.06) & 0.17 (0.04) & 0.16 (0.03) \\
0.8 & 0.20 (0.06) & 0.20 (0.04) & 0.19 (0.03) \\
0.9 & 0.22 (0.06) & 0.22 (0.03) & 0.21 (0.03) \\
1.0 & 0.15 (0.03) & 0.15 (0.02) & 0.14 (0.02) \\
\hline\hline
\end{tabular}
\end{table}

\begin{table}[ht]
\centering
\caption{The magnitude of empirical biases (standard errors) of $\hat{\rho}_{n,k}$ when $\ag=5$. Optimal $k$s are selected using the method from~\citet{clauset:shalizi:newman:2009}, with averages of $k=48$ ($N=100$), $k=187$ ($N=500$), and  $k=602$ ($N=2000$).
\label{t:sim7}}
\medskip
\begin{tabular}{cccc}
  \hline\hline
 $\rho_{XY}$ & $N=100$ &  $N=500$ & $N=2000$  \\
  \hline
-1.0 & 0.19 (0.03) & 0.19 (0.02) & 0.18 (0.02) \\
-0.9 & 0.28 (0.06) & 0.28 (0.03) & 0.27 (0.02) \\
-0.8 & 0.25 (0.06) & 0.26 (0.03) & 0.25 (0.02) \\
-0.7 & 0.22 (0.06) & 0.21 (0.03) & 0.21 (0.02) \\
-0.6 & 0.17 (0.06) & 0.17 (0.03) & 0.17 (0.02) \\
-0.5 & 0.14 (0.06) & 0.13 (0.03) & 0.13 (0.02) \\
-0.4 & 0.10 (0.06) & 0.10 (0.03) & 0.10 (0.02) \\
-0.3 & 0.06 (0.06) & 0.07 (0.03) & 0.06 (0.02) \\
-0.2 & 0.04 (0.05) & 0.04 (0.03) & 0.04 (0.01) \\
-0.1 & 0.02 (0.05) & 0.02 (0.03) & 0.02 (0.01) \\
0.0 & 0.00 (0.05) & 0.00 (0.02) & 0.00 (0.01) \\
0.1 & 0.01 (0.05) & 0.02 (0.03) & 0.02 (0.01) \\
0.2 & 0.04 (0.05) & 0.04 (0.03) & 0.04 (0.01) \\
0.3 & 0.07 (0.05) & 0.07 (0.03) & 0.06 (0.01) \\
0.4 & 0.10 (0.06) & 0.10 (0.03) & 0.10 (0.02) \\
0.5 & 0.14 (0.06) & 0.14 (0.03) & 0.13 (0.02) \\
0.6 & 0.18 (0.06) & 0.17 (0.03) & 0.17 (0.02) \\
0.7 & 0.22 (0.06) & 0.22 (0.03) & 0.21 (0.02) \\
0.8 & 0.25 (0.06) & 0.25 (0.03) & 0.25 (0.02) \\
0.9 & 0.28 (0.05) & 0.28 (0.03) & 0.27 (0.02) \\
1.0 & 0.19 (0.03) & 0.19 (0.02) & 0.18 (0.02) \\
\hline\hline
\end{tabular}
\end{table}

\clearpage

\subsection{Relation to the measures $\chi$ and $\bar{\chi}$ } \label{ss:a_chi}

The joint distribution of $\|X\|$ and $\|Y\|$ can be used
to assess the likelihood of extreme curves $X$ and $Y$ occurring simultaneously, where ``extreme" refers to their size measured
by the norm.  Since $\|X\|$ and $\|Y\|$  are scalars,
we can apply to them two commonly used extremal measures $\chi$ and $\bar{\chi}$ introduced by~\citet{coles:heffernan:tawn:1999}.
This allows us to determine
whether extreme $\|X\|$ and $\|Y\|$ values occur simultaneously.

We start by recalling the definitions of $\chi$ and $\bar{\chi}$.
Let $F_U$ and $F_V$ are the marginal distribution functions of nonnegative random variables $U$ and $V$. The measure $\chi$ is defined as $\chi = \lim_{q \to 1} \chi(q)$, where
\[
\chi(q) = P(F_U(U)>q| F_V(V)>q), \quad 0<q<1.
\]
If $U$ and $V$ are asymptotically independent, then $\chi = 0$, and if they are asymptotically dependent, then $\chi \in (0,1]$. The measure $\bar{\chi}$ is defined as $\bar{\chi} = \lim_{q \to 1} \bar{\chi}(q)$, where
\[
\bar{\chi}(q) = \frac{2 \log P(F_U(U)>q)}{\log P(F_U(U)>q, F_V(V)>q)} -1, \quad 0<q<1.
\]
If $U$ and $V$ are asymptotically independent, then $\bar{\chi} \in [-1,1)$, and if they are asymptotically dependent, then $\bar{\chi} = 1$. These two measures are thus complementary, and it is useful
to apply them together. We will demonstrate that our extremal correlation coefficient is complementary to them because it provides additional information
on the shapes of the extremal curves.

We generate random curves $X$ and $Y$ as described in equation~\eqref{eq:XY}, with $N=1000$, for $\rho_{XY} \in \{0, 0.4, 0.7, 1\}$, and compute $\chi(q)$ and $\bar{\chi}(q)$ using  $(\|X\|, \|Y\|)$. We did not consider the case when  $\rho_{XY}$ is negative, as the results are similar to those for $|\rho_{XY}|$. The results are presented in Figure~\ref{f:Chi} for each value of $\rho_{XY}$. When $\rho_{XY}=0$, extreme curves in $X$ and $Y$ do not occur simultaneously, and the corresponding values for $\chi$ and $\bar{\chi}$ are both close to zero, see the upper left paired plot in Figure~\ref{f:Chi}. When $\rho_{XY} \in \{0.4, 0.7, 1\}$, extreme curves in $X$ and $Y$ tend to occur simultaneously. In that case, $\chi$ should be greater than 0 and $\bar{\chi}(q)$ should approach 1 as $q \to 1$, which is observed in the other paired plots in Figure~\ref{f:Chi}.

\begin{figure}[h]
\begin{centering}
\includegraphics[width=1\textwidth]{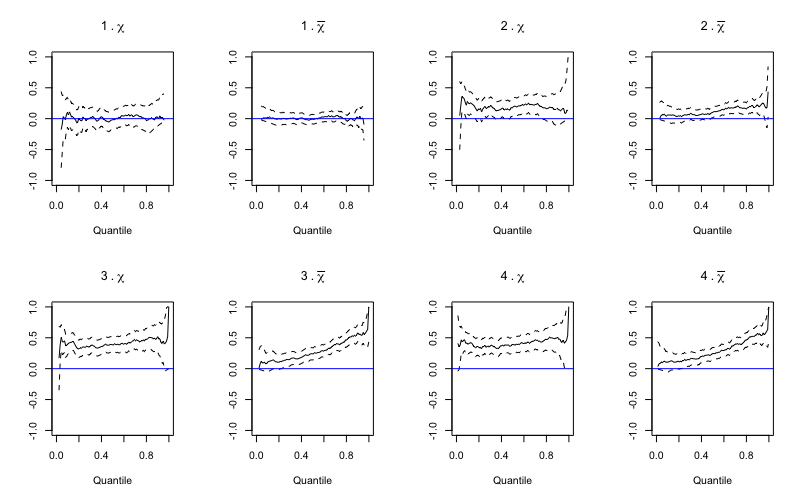}
\par\end{centering}
\caption{The values of $\chi(q)$ and $\bar{\chi}(q)$ (solid) with 95\% confidence bands (dashed) are displayed in the paired plots. The values are computed from $\|X\|$ and $\|Y\|$, where $X$ and $Y$ are defined as in~\eqref{eq:XY}. The paired plots are arranged for $\rho_{XY} = 0$ in the upper left, $0.4$ in the upper right, $0.7$ in the lower left, and $1$ in the lower right.
\label{f:Chi}}
\end{figure}

It is important to emphasize that the measures $\chi$ and $\bar{\chi}$ for $(\|X\|$, $\|Y\|$) should be used with the extremal correlation coefficient $\rho_{XY}$. Although $\chi$ and $\bar{\chi}$ quantify whether extreme curves in $X$ and $Y$ occur simultaneously, they do not account for the shapes of curves. Therefore, $\rho_{XY}$ complements $\chi$ and $\bar{\chi}$ by evaluating the relationship between the shapes of the curves, highlighting the new, functional aspect of $\rho_{XY}$.
To illustrate this, consider the following toy example:
\begin{equation} \label{eq:XY_1}
X(t) = Z_{1}\phi_1(t) + N_1\phi_2(t); \ \
Y(t) = Z_{1}\phi_2(t) + N_2\phi_1(t),
\end{equation}
where $Z_1$, $N_1$, $N_2$, $\phi_1$, and $\phi_2$ are defined in Lemma~\ref{l:sim1}. Since $X$ and $Y$ share $Z_1$, extreme events occur in both $X$ and $Y$ simultaneously. It then follows that $\chi=\bar{\chi}=1$, as shown in Fig~\ref{f:Chi_1}. However, when examining the extreme curves in $X$ and $Y$, their patterns are unrelated since $\phi_1$ and $\phi_2$ are orthogonal. In this case, $\rho_{XY}$ captures the lack of similarity between the shapes, resulting in an estimate close to 0.

\begin{figure}[h]
\begin{centering}
\includegraphics[width=.5\textwidth]{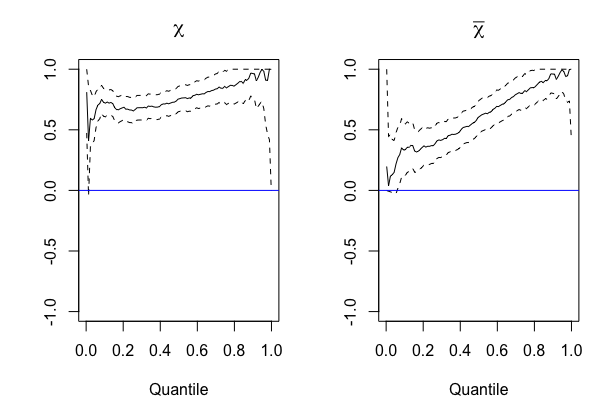}
\par\end{centering}
\caption{The values of $\chi(q)$ and $\bar{\chi}(q)$ (solid) with 95\% confidence bands (dashed) are displayed in the paired plot. The values are computed from $\|X\|$ and $\|Y\|$, where $X$ and $Y$ are defined as in~\eqref{eq:XY_1}.
\label{f:Chi_1}}
\end{figure}

\clearpage

\subsection{Effect of phase variation on $\rho_{XY}$} \label{ss:a_phv}

Phase variation occurs when some  properties of curves shift over time, as seen with growth spurts occurring at different times for different individuals. Any variation in phase can affect the extremal correlation coefficient. In general, if extremal curves in the two samples are
out of phase, the sample extremal correlation coefficient will become closer to zero.
For example, if heat
waves tend to arrive at different times at different locations,
this will result in the extremal correlation coefficient closer to zero than if the heat wave
arrival times matched.
To illustrate this, we consider the data generating process described in~\eqref{eq:XY}, but with $\phi_k$ in $Y(t)$, replaced by
\[
\phi_k^*(t) = 0, \ {\rm if} \  t\le 0.3, \quad
\phi_k^*(t) = \phi_k\lp t-0.3\rp, \ {\rm if} \  t> 0.3.
\]
Table~\ref{t:sim8} presents the $\hat{\rho}_{n,k}$
with and without phase shift. The results indicate that phase shift
brings the $\hat{\rho}_{n,k}$ closer to zero.

\begin{table}[ht]
\centering
\caption{The $\hat{\rho}_{n,k}$ for samples without and with phase variation when $\ag=3$ and $N=100$. The cut-off $k$s is selected using the method of~\citet{danielsson:Ergun:deHaan:deVries:2016}.
\label{t:sim8}}
\medskip
\begin{tabular}{ccc}
  \hline\hline
   & \multicolumn{2}{c}{Phase variation}  \\ \cline{2-3}
 $\rho_{XY}$ & No &  Yes \\
  \hline
-1.0 & -0.96 & -0.90\\
-0.9 & -0.84 & -0.78\\
-0.8 & -0.73 & -0.69\\
-0.7 & -0.66 & -0.62\\
-0.6 & -0.57 & -0.54\\
-0.5 & -0.49 & -0.47\\
-0.4 & -0.41 & -0.39\\
-0.3 & -0.31 & -0.29\\
-0.2 & -0.22 & -0.20\\
-0.1 & -0.11 & -0.09\\
0.0 & 0.00 & -0.02 \\
0.1 & 0.11 & 0.10 \\
0.2 & 0.21 & 0.20 \\
0.3 & 0.32 & 0.28 \\
0.4 & 0.40 & 0.38 \\
0.5 & 0.50 & 0.46 \\
0.6 & 0.56 & 0.53 \\
0.7 & 0.66 & 0.61 \\
0.8 & 0.74 & 0.69 \\
0.9 & 0.84 & 0.78 \\
1.0 & 0.96 & 0.90 \\
\hline\hline
\end{tabular}
\end{table}

\clearpage

\appendixfive
\section{Supplementary results for Section~\ref{s:dt}} \label{s:a_dt}

This section presents Hill plots for the CIDRs ETF data discussed in Section~\ref{ss:etf} and the temperature data discussed in Section~\ref{ss:hw}. These plots suggest that the extreme curves discussed in those sections appear to be regularly varying, as the plots generally show stable regions.

\begin{figure}[h]
\begin{centering}
\includegraphics[width=1\textwidth]{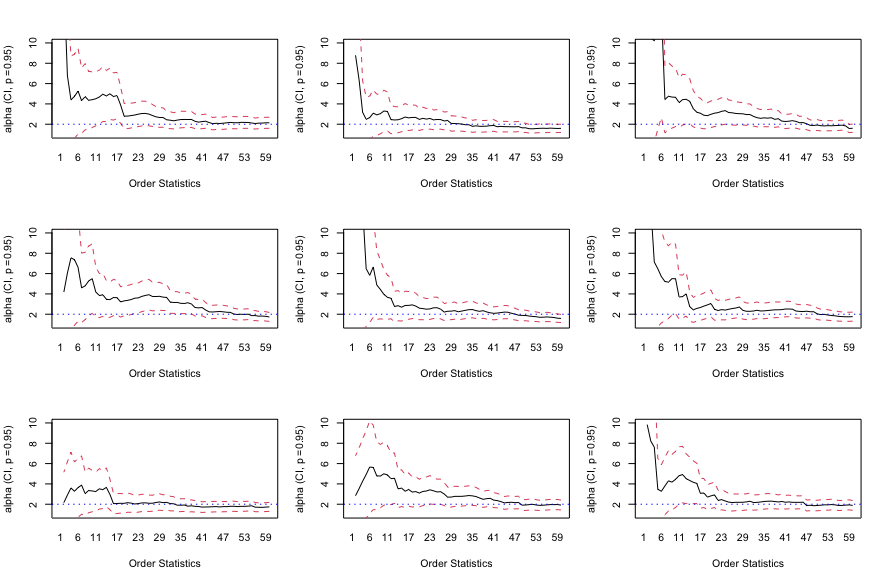}
\par\end{centering}
\caption{Hill plots of the norm of centered CIDRs for each  sector ETF, with the Hill estimates (solid) and 95\% confidence intervals (dashed). From left to right, the upper row shows: XLY, XLP, XLE; the middle row: XLF, XLV, XLI; and the lower row: XLB, XLK, XLU.
\label{f:Hill_e}}
\end{figure}

\begin{figure}[h]
\begin{centering}
\includegraphics[width=1\textwidth]{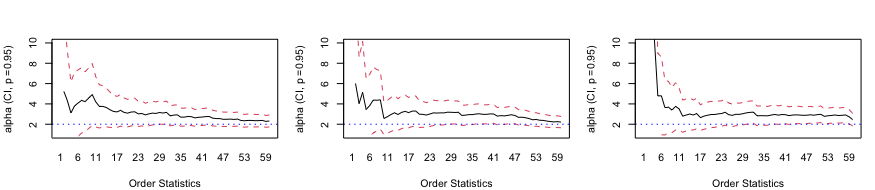}
\par\end{centering}
\caption{Hill plots of the norm of centered temperature curves for each location, with the Hill estimates (solid) and 95\% confidence intervals (dashed). From left to right, Fort Collins, CO, Colorado Springs, CO, and Austin, TX.
\label{f:Hill_t}}
\end{figure}

\end{document}